\theoremstyle{plain}
\newtheorem{thm}{Theorem}
\newtheorem{prop}{Proposition}[section]
\newtheorem{lem}[prop]{Lemma}
\newtheorem{cor}[prop]{Corollary}
\newtheorem{assume}[prop]{Assumption}
\newtheorem{defi}[prop]{Definition}
\newtheorem{rmk}[prop]{Remark}
\newcommand {\R} {\mathbb{R}} 
 \newcommand {\N} {\mathbb{N}}
\newcommand {\C} {\mathbb{C}} 
\newcommand {\p} {\partial}
\newcommand {\D} {\Delta}
\DeclareMathOperator {\supp} {supp}
\DeclareMathOperator{\di}{div}
\DeclareMathOperator {\dist} {dist}
\DeclareMathOperator{\inte} {int}
\DeclareMathOperator{\F} {\mathcal{F}}
\title{On some partial data Calder\'on type problems with mixed boundary conditions}
\author{Giovanni Covi}
\address{Department of Mathematics and Statistics, University of Jyväskylä, Seminaarinkatu 15, 40014 Jyväskylä, Finland}
\email{giovanni.g.covi@jyu.fi}
\author{Angkana Rüland}
\address{Institut für Angewandte Mathematik, Ruprecht-Karls-Universität Heidelberg, Im Neunheimer Feld 205, 69120 Heidelberg, Germany}
\email{Angkana.Rueland@uni-heidelberg.de}
\begin{document}

\begin{abstract}
In this article we consider the simultaneous recovery of bulk and boundary potentials in (degenerate) elliptic equations modelling (degenerate) conducting media with inaccessible boundaries. This connects local and nonlocal Calder\'on type problems. We prove two main results on these type of problems: On the one hand, we derive simultaneous bulk and boundary Runge approximation results. Building on these, we deduce uniqueness for localized bulk and boundary potentials.
On the other hand, we construct a family of CGO solutions associated with the corresponding equations. These allow us to deduce uniqueness results for arbitrary bounded, not necessarily localized bulk and boundary potentials. The CGO solutions are constructed by duality to a new Carleman estimate.
\end{abstract}

\maketitle

\section{Introduction}
There has been a substantial amount of work on nonlocal inverse problems in the last years (see for instance the survey articles \cite{S17, R18} and the references cited below). These nonlocal equations arise naturally in many problems from applications including, for instance, finance \cite{AB88, Sch03, Lev04}, ecology \cite{RR09, Hum10, MV18}, image processing \cite{GO08}, turbulent fluid mechanics \cite{Con06}, quantum mechanics \cite{Las00, Las18} and elasticity \cite{S89} as well as many other fields \cite{GL97, MK00, Eri02, DGLZ12, AMRT10, DZ10, DG13, Ros15, BV16}. In this article, we provide yet another point of view on these non-local inverse problems by adopting a local ``Caffarelli-Silvestre perspective''. The resulting equations and the associated inverse problems are of interest in their own right, modelling for instance situations in which there are unknown, not-directly measurable \emph{fluxes} or \emph{potentials on the boundary} of an electric device in addition to \emph{electric and/or magnetic potentials in the interior} of it. Moreover, we also include situations in which the conducting property of the (electric) medium may deteriorate or improve towards the boundary.
In this setting of unknown and not directly accessible \emph{boundary and bulk potentials} at possibly \emph{degenerate} conductivities, we are interested in the reconstruction of both of these boundary and bulk potentials which are coupled through possibly degenerate, linear elliptic equations.

\subsection{A model setting}
As a model case, we consider the following problem set-up with non-degenerate conductivities:
Let $\Omega \subset \R^n$ be an open, bounded, $C^2$-regular (or smooth) domain, modelling the conducting body. Assume that $\Sigma_1, \Sigma_2 \subset \partial \Omega$ are two disjoint, relatively open, smooth non-empty sets. Consider the following magnetic Schrödinger equation with mixed boundary conditions
\begin{align}
\label{eq:Schroedinger}
\begin{split}
-\D u - i A \cdot \nabla u - i \nabla \cdot (A u) + (|A|^2 +V) u & = 0  \mbox{ in } \Omega,\\
\p_{\nu} u + qu &= 0 \mbox{ on } \Sigma_1,\\
u & = f \mbox{ on } \Sigma_2,\\
u & = 0 \mbox{ on } \partial \Omega \setminus (\Sigma_1 \cup \Sigma_2),
\end{split}
\end{align}
where, for simplicity, the coefficients are supposed to satisfy the conditions that
\begin{align}
\label{eq:bds}
V \in L^{\infty}(\Omega, \R), \ A\in L^{\infty}(\Omega, \R^n), \ q \in L^{\infty}(\partial \Omega,\R),
\end{align}
and that 
\begin{align}
\label{eq:A1A2}
\nu \cdot A = 0\mbox{ on } \partial \Omega.
\end{align}
In analogy to the setting of the Schrödinger version of the partial data Calder\'on problem we seek to recover the potentials $A,V$ and $q$ from boundary measurements encoded in the (partial) Dirichlet-to-Neumann map
\begin{align*}
\Lambda_{A,V,q}: \widetilde{H}^{\frac{1}{2}}(\Sigma_2) \mapsto H^{- \frac{1}{2}}(\Sigma_2), \ f|_{\Sigma_2} \mapsto \p_{\nu} u|_{\Sigma_2}.
\end{align*}
In a formally correct way this will be defined by means of the bilinear form 
\begin{align}
\label{eq:Bform12}
B_{A,V,q}(u,v):= \int\limits_{\Omega} \nabla u \cdot \overline{\nabla v} + i \overline{v} A \cdot \nabla u  -i  A u \cdot \overline{\nabla v} + V u \overline{v} dx  + \int\limits_{\Sigma_1} q u \overline{v} d \mathcal{H}^{n-1} \mbox{ for } u,v \in H^1(\Omega, \C),
\end{align}
in Definition \ref{defi:DtN} in Section \ref{sec:well_posed1} below. Here $\overline{u}$ denotes the complex conjugate of $u$.

We remark that in contrast to the ``usual'' partial data, magnetic Schrödinger version of the Calder\'on problem, in \eqref{eq:Schroedinger} the first boundary condition yields a new ingredient: Besides the partial data character of the problem which is encoded in the measurement of data on $\Sigma_2$ only, we now also consider a setting in which a part of the domain, $\Sigma_1$, is modelled as inaccessible and on which we also seek to recover an unknown boundary flux/potential. This is closely related to the so-called inverse Robin problem which arises, for instance, in corrosion detection (see \cite{I97} and the references below). We thus combine a Calder\'on with a Robin inverse problem, studying a setting in which in addition to the bulk potentials in the interior of the domain $\Omega$ also unknown boundary potentials and mixed-type boundary conditions are present.

In this framework it is our objective to investigate the following questions:
\begin{itemize}
\item[(Q1)] Let us assume that $A, V, q$ and $\Lambda_{A, V,q}$ are as above. Can we then simultaneously recover the boundary potential $q$, the magnetic potential $A$ and the bulk potential $V$, if the bulk (gradient) potentials $A$ and $V$ are supported in a set $\Omega_1 \Subset \Omega$ which is open and bounded? 
\item[(Q2)] Is this recovery still possible -- at least for $V$ and $q$ -- if the bulk potentials are not compactly supported in $\Omega$? In particular, is this possible, if there is no longer some safety distance between $\Omega_1$ and the boundary parts given by $\Sigma_1$ and $\Sigma_2$?
\end{itemize}

Let us comment on these questions: Both of these are \emph{partial} data problems with the objective of reconstructing unknown potentials \emph{simultaneously} on the boundary and in the bulk (see \cite{KS14} for a survey on the known partial data results). As explained in the sequel, the effect of the boundary and bulk potentials however is expected to differ quite substantially in the context of the inverse problem.

On the one hand, the magnetic and scalar potentials $A$ and $V$ are \emph{local, interior} potentials. The dimension counting heuristics on the recovery of these follow from the ones for the classical Calder\'on problem: One seeks to recover unknown objects of $n$ degrees of freedom from the (partial) Dirichlet-to-Neumann map, an operator which encodes $2(n-1)$ degrees of freedom. Building on the seminal result \cite{SU87}, a canonical tool to address the associated uniqueness question for the ``local'' potentials $A, V$ are complex geometric optics (CGO) solutions. It is further well-known that the presence of the magnetic potential creates additional difficulties due to the resulting gauge invariances. In spite of this, both in the full and the partial data settings, CGO solutions have been constructed starting with the works \cite{NSU95, S93}, see also \cite{C14, CT16}. These however do not cover our mixed-data set-up in which additional unknowns are present on the boundary. 

On the other hand, the heuristics on the recovery of the boundary potential give hope for substantially stronger \emph{boundary} uniqueness results: Indeed, recalling from the argument above that the Dirichlet-to-Neumann operator formally contains $2(n-1)$ degrees of freedom, we note that the recovery of $q$ which is a function of $n-1$ degrees of freedom is always overdetermined. Hence, in analogy to \cite{GRSU18}, even single measurement results for the uniqueness of the boundary data can be expected (see \cite{CJ99, ADPR03} for results of this type for the Robin inverse problem). We view this as a ``non-local'' reconstruction problem at the boundary; a connection to the fractional Calder\'on problem is explained below.

In dealing with the questions (Q1) and (Q2) we thus combine ideas from ``local'' and ``non-local'' inverse problems. Here in our analysis of the question (Q1) the softer ``non-local'' effects dominate, while in our approach towards the problem (Q2), the ``local'' interior effects prevail. In particular, we thus
\begin{itemize}
\item address question (Q1) using simultaneous Runge approximation results in the bulk and on the boundary (see Sections \ref{sec:Q11}-\ref{sec:Runges}),
\item deal with question (Q2) by constructing suitable CGO solutions (see Sections \ref{sec:Carl}-\ref{sec:CGO}).
\end{itemize}

Indeed, in \eqref{eq:Schroedinger} we view the boundary data on $\Sigma_1$ as a local formulation \`a la Caffarelli-Silvestre \cite{CS07} of a Schrödinger equation for the half-Laplacian on $\Sigma_1$. Then, using the fact that in question (Q1) the local interior potentials $A,V$ are only supported in a compact subset of $\Omega$ which has some safety distance to $\Sigma_1, \Sigma_2$, this indicates that the problem can be reduced to a full data type problem by means of Runge approximation results. In order to deal with the interior potentials, we recall the Runge approximation ideas developed in \cite{AU04} and quantified in \cite{RS19}. These allow one to approximate full data CGO solutions in $\Omega_1$ by partial data solutions in the whole domain $\Omega$. Compared to \cite{AU04} in our setting of \eqref{eq:Schroedinger}, we have to deal with the additional challenge that also on the boundary of $\Omega$ an unknown potential is present. However, due to the disjointness of the domains $\Sigma_1$ and $\Sigma_2$ and motivated by the interpretation of the equation on $\Sigma_1$ as a fractional Schrödinger equation, it is possible to prove corresponding simultaneous density results \emph{both in the bulk and on the boundary} (see Proposition \ref{prop:s_Runge}).

In contrast to the setting of the question (Q1), the question (Q2) is dominated by ``local'' effects. Since now $V$ may be supported in the whole domain $\Omega$ and may in particular be supported up to the sets $\Sigma_1, \Sigma_2$, the Runge approximation techniques are no longer applicable in $\Omega$. In order to nevertheless address the uniqueness question, we thus construct CGO solutions. Here we can however not directly make use of the known full/partial data CGO solutions from the magnetic Schrödinger problem, due to the presence of the additional boundary condition on $\Sigma_1$ in \eqref{eq:Schroedinger}. A related difficulty had earlier been addressed in \cite{C14,C15} in the context of partial data problems. However with respect to the setting in \cite{C15} our equation on the boundary imposes an additional challenge in that the potential $q$ is assumed to be \emph{unknown} and the problem is of \emph{mixed-data} type. Thus, aiming at uniqueness results by means of CGO solutions, we construct a new family of CGO solutions which takes into account \emph{both} the unknown bulk and boundary potentials. This relies on new Carleman estimates for a Caffarelli-Silvestre type extension problem (see Proposition \ref{prop:Carl} and Corollary \ref{cor:Carl}).

\subsection{A family of (degenerate) boundary-bulk partial data Schrödinger problems}

Before discussing our main results, let us present a  variation of the problem outlined above in which we also study operators whose conductivities or potentials depend on the distance to the boundary. More precisely, for $s\in (0,1)$ and for the potentials $A, V, q$ satisfying the conditions in \eqref{eq:A1A2} and \eqref{eq:bds}, we consider the following equation
\begin{align}
\label{eq:frac_Schr}
\begin{split}
-\nabla \cdot d^{1-2s} \nabla u - i A d^{1-2s} \cdot \nabla u - i \nabla \cdot (d^{1-2s} A u) + d^{1-2s}(|A|^2+ V) u & = 0 \mbox{ in } \Omega,\\
\lim\limits_{d(x) \rightarrow 0} d^{1-2s} \p_{\nu} u + qu &= 0 \mbox{ on } \Sigma_1, \\
u & = f \mbox{ on } \Sigma_2, \\
u & = 0 \mbox{ on } \partial \Omega \setminus (\Sigma_1 \cup \Sigma_2).
\end{split}
\end{align}
Here $d: \Omega \rightarrow [0,\infty)$ denotes a smooth function which is equal to the distance to the boundary in a neighbourhood of the boundary. If not otherwise explained, all the functions and in particular $u,v$ in the sequel will be complex-valued.
As in the case $s= \frac{1}{2}$ we define an associated (partial) Dirichlet-to-Neumann map as 
\begin{align*}
\Lambda_{s, A, V, q}: \widetilde{H}^{s}(\Sigma_2) \rightarrow H^{-s}(\Sigma_2), \ f|_{\Sigma_2} \mapsto \lim\limits_{d(x) \rightarrow 0} d(x)^{1-2s} \p_{\nu} u|_{\Sigma_2}.
\end{align*}
Again, in a formally precise way it is defined by means of the bilinear form 
\begin{align}
\label{eq:Bforms}
\begin{split}
B_{s,A,V,q}(u,v) & = \int\limits_{\Omega} d^{1-2s} \nabla u \cdot \overline{\nabla v} - d^{1-2s} i \overline{v} A \cdot \nabla u + d^{1-2s} i A u \cdot \overline{\nabla v} + d^{1-2s} (V+|A|^2) u \overline{v} dx\\
& \quad  + \int\limits_{\Sigma_1} q u \overline{v} d\mathcal{H}^{n-1} \mbox{ for } u, v \in H^1(\Omega, d^{1-2s}).
\end{split}
\end{align}
For the equation \eqref{eq:frac_Schr} and the Dirichlet-to-Neumann map \eqref{eq:Bforms} (and a slight variant of it, see \eqref{eq:CGO_model} below) we seek to investigate the analogues of the questions (Q1) and (Q2) for $s \in(0,1)$, i.e. the reconstruction of the scalar, magnetic and boundary potentials from the generalized Dirichlet-to-Neumann map in the cases that the interior potentials are either supported away from the boundary or reach up to the boundary.

These questions share the same type of local and nonlocal features as explained above. However, the relation to the fractional Laplacian may become more transparent. To illustrate this, we recall the Caffarelli-Silvestre extension \cite{CS07} which allows one to compute the fractional Laplacian through a problem of the type \eqref{eq:frac_Schr} in the unbounded domain $\R^{n+1}_+$. To this end, given a function $u\in H^s(\R^n)$ one considers the degenerate elliptic problem
\begin{align}
\label{eq:CS_ext}
\begin{split}
\nabla \cdot x_{n+1}^{1-2s} \nabla \tilde{u} &= 0 \mbox{ in } \R^{n+1}_+,\\
\tilde{u} & = u \mbox{ on } \R^n \times \{0\}.
\end{split}
\end{align}
The fractional Laplacian then turns into the generalized Dirichlet-to-Neumann operator associated with this equation; $(-\D)^s u := c_s \lim\limits_{x_{n+1}\rightarrow 0} x_{n+1}^{1-2s} \p_{n+1} \tilde{u}(x)$. The idea of realizing the fractional Laplacian as a (degenerate) Dirichlet-to-Neumann operator of a local, degenerate elliptic equation has been further extended to rather general variable coefficient settings, see for instance \cite{ST10, CS16}. In this sense, we view the equation \eqref{eq:frac_Schr} and also \eqref{eq:Schroedinger} as a localized proxy for the inverse problem of recovering the potentials $\tilde{A}$, $\tilde{V}$ and $\tilde{q}$ in the fractional Schrödinger equation
\begin{align}
\label{eq:nonlocal}
\begin{split}
(-  (\nabla + i \tilde{A})^2 + \tilde{V})^s u + \tilde{q} u & = 0 \mbox{ in } \tilde{\Omega} \subset \R^{n-1},\\
u & = f \mbox{ on } \tilde{W} \subset \R^{n-1} \setminus \overline{\tilde{\Omega}}, 
\end{split}
\end{align}
from an associated Dirichlet-to-Neumann map. We note that in \eqref{eq:frac_Schr} the set $\Omega \subset \R^n$ plays the role of the extended space $\R^{n+1}_+$ in \eqref{eq:CS_ext}. As a word of caution we however remark that, following the classical formulation of the Caffarelli-Silvestre extension \eqref{eq:CS_ext} as an equation in $n+1$ dimensions, the formulation of the problem \eqref{eq:CS_ext} is shifted by one dimension with respect to our setting in \eqref{eq:frac_Schr}. In contrast to the Caffarelli-Silvestre extension problem associated with \eqref{eq:nonlocal}, \eqref{eq:frac_Schr} has the advantage that we can work in a bounded domain $\Omega$. This allows us to circumvent the discussion of various issues which arise in the inverse problem for the full Caffarelli-Silvestre extension of \eqref{eq:nonlocal}. We emphasize that just as \eqref{eq:frac_Schr} the problem \eqref{eq:nonlocal} has a natural gauge invariance. In particular it represents yet another nonlocal model with gauge invariances besides the ones which had been introduced and analysed in \cite{BGU18,C20a,CLR18, L20}.

\subsection{Main results}

As one of the main results of this article we provide a complete answer (at $L^{\infty}$ regularity) for the uniqueness question in (Q1) in the case $s= \frac{1}{2}$. 

\begin{thm}
\label{thm:Q11/2}
Let $\Omega \subset \mathbb R^n$, $n\geq 3$, be an open, bounded and $C^2$-regular domain. Assume $\Omega_1\Subset \Omega$ is an open, bounded set with $\Omega \setminus \Omega_1$ simply connected and that $\Sigma_1, \Sigma_2 \subset \p\Omega$ are two disjoint, relatively open sets. If the potentials $q_1, q_2 \in L^\infty(\Sigma_1)$, $\,A_1,A_2 \in C^1(\Omega_1, \R^{n})$ and $V_1, V_2 \in L^{\infty}(\Omega_1)$ in the equation \eqref{eq:Schroedinger} are such that $$\Lambda_1:= \Lambda_{A_1, V_1, q_1}=\Lambda_{A_2,V_2,q_2}=:\Lambda_2\;,$$ 
\noindent then $q_1=q_2$, $\;V_1 = V_2$ and $dA_1 = dA_2$.
\end{thm}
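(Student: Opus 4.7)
The plan is to combine a standard integral identity coming from the assumption $\Lambda_1 = \Lambda_2$ with the simultaneous bulk--boundary Runge approximation of Proposition \ref{prop:s_Runge}, thereby reducing the question to the classical magnetic Schr\"odinger Calder\'on problem on $\Omega_1$ together with an independent boundary recovery on $\Sigma_1$.

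First I would derive the integral identity. Testing the equality of the partial Dirichlet-to-Neumann pairings on $\Sigma_2$ against pairs of solutions $u$, $v$ of \eqref{eq:Schroedinger} associated to the two sets of potentials, and using the bilinear form \eqref{eq:Bform12} together with the fact that $A_j, V_j$ are supported in $\Omega_1$ while $q_j$ lives on $\Sigma_1$, one obtains an identity of the schematic form
\begin{align*}
0 = \int_{\Omega_1}\Bigl[(V_1-V_2)\,u\overline v + \mathcal{M}(A_1-A_2;u,v)\Bigr]\,dx + \int_{\Sigma_1}(q_1-q_2)\,u\overline v\,d\mathcal{H}^{n-1},
\end{align*}
where $\mathcal{M}$ collects the usual first order magnetic contribution $i(A_1-A_2)\cdot(\overline v\nabla u - u\overline{\nabla v})$ together with the zeroth order term $(|A_1|^2-|A_2|^2)u\overline v$.

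Second, I would invoke Proposition \ref{prop:s_Runge}: given any solution $w \in H^1(\Omega_1,\C)$ of the bulk magnetic Schr\"odinger equation with the relevant coefficients and any $\phi \in \widetilde{H}^{1/2}(\Sigma_1)$, there is a sequence of global solutions of \eqref{eq:Schroedinger} with Dirichlet data supported in $\Sigma_2$ whose restriction to $\Omega_1$ approximates $w$ in $H^1(\Omega_1)$ and whose trace on $\Sigma_1$ approximates $\phi$ in $\widetilde{H}^{1/2}(\Sigma_1)$. Crucially, because $\Omega_1$ and $\Sigma_1$ are disjoint, the bulk datum $w$ and the boundary datum $\phi$ may be prescribed \emph{independently}. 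Choosing $w_1 = w_2 = 0$ on the bulk side and letting $\phi_1,\phi_2$ range over a dense subset of $\widetilde{H}^{1/2}(\Sigma_1)$ collapses the identity to
\[
\int_{\Sigma_1} (q_1-q_2)\,\phi_1\overline{\phi_2}\,d\mathcal{H}^{n-1} = 0,
\]
which yields $q_1 = q_2$. Taking instead $\phi_1 = \phi_2 = 0$ and letting $w_1, w_2$ run over arbitrary solutions on $\Omega_1$ reduces the identity to the standard orthogonality relation for the magnetic Schr\"odinger Calder\'on problem on $\Omega_1$; the hypothesis that $\Omega \setminus \Omega_1$ is simply connected lets one globally extend any gauge transformation found in $\Omega_1$, so that the CGO constructions of \cite{NSU95, S93} give $V_1 = V_2$ and $dA_1 = dA_2$.

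The principal obstacle is Proposition \ref{prop:s_Runge} itself. By Hahn--Banach duality it is equivalent to a joint unique continuation statement for the formal adjoint of the mixed boundary value problem: a solution whose Neumann trace vanishes on $\Sigma_2$ and which is orthogonal to all bulk test functions in $\Omega_1$ and all boundary test functions on $\Sigma_1$ must vanish identically. The interior part is handled by classical local unique continuation for second order elliptic operators; the boundary part exploits the Caffarelli--Silvestre viewpoint that interprets the Robin condition on $\Sigma_1$ through the half-Laplacian, where the disjointness of $\Sigma_1$ and $\Sigma_2$ combined with a fractional unique continuation property supplies the needed rigidity.
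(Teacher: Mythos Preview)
Your overall strategy matches the paper's: derive the Alessandrini identity, invoke a simultaneous bulk--boundary Runge approximation to decouple the $\Sigma_1$ and $\Omega_1$ contributions, recover $q$ by setting the bulk approximants to zero, and then feed CGOs into the remaining bulk identity to obtain $V_1=V_2$ and $dA_1=dA_2$.

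A few points deserve correction. First, the result you cite is not the one you need: Proposition~\ref{prop:s_Runge} treats the degenerate case $s\in(0,1)$ with $A=0$ and gives density only in the $L^2(\Sigma_1)\times L^2(\Omega_1)$ topology. Because the magnetic term $\mathcal{M}$ involves $\nabla u$ and $\nabla v$, you must approximate in $H^1(\Omega_1)$ to pass to the limit; the paper establishes exactly this in Lemma~\ref{Lem:Runge_with_magnetic_potentials} (with $L^2(\Sigma_1)$, not $\widetilde H^{1/2}(\Sigma_1)$, on the boundary side). Second, your description of the ``principal obstacle'' overcomplicates matters: for $s=\tfrac12$ the dual problem is a non-degenerate second-order elliptic equation, and the vanishing of $w$ in $\Omega\setminus\overline{\Omega_1}$ follows from standard elliptic unique continuation---no fractional or Caffarelli--Silvestre machinery is needed here. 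Third, the hypothesis that $\Omega\setminus\Omega_1$ is simply connected is used precisely at this UCP step in the Runge argument (to propagate vanishing from $\Sigma_2$ throughout the connected complement), not to extend gauges out of $\Omega_1$.
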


This relies on simultaneous approximation results for the bulk and boundary measurements. For instance, restricting first to the case in which $A=0$ and considering the sets
\begin{align*}
S_{V,q}&:=\{u \in L^2(\Omega): \ u \mbox{ is a weak solution to } \eqref{eq:Schroedinger} \mbox{ in } \Omega\},\\
\tilde{S}_{V,q}&:=\{u \in H^1(\Omega_1): \ u \mbox{ is a weak solution to } \eqref{eq:Schroedinger} \mbox{ in } \Omega\} \subset L^2(\Omega_1),
\end{align*}
we prove the following simultaneous boundary and bulk approximation result.

\begin{lem}
\label{lem:sim_dense}
Assume that the conditions from Section \ref{sec:not_sets} hold for $\Omega,\Omega_1$ and $\Sigma_1, \Sigma_2$.
Let $V \in L^{\infty}(\Omega)$, $q \in L^{\infty}(\partial \Omega)$. Then the set 
\begin{align*}
\mathcal{R}_{bb}:=\{(u|_{\Sigma_1}, u|_{\Omega_1})  : \ u|_{\Sigma_1} = P f|_{\Sigma_1} \mbox{ and } u|_{\Omega_1} = P f|_{\Omega_1} \mbox{ with } f\in C_c^{\infty}(\Sigma_2)\} \subset L^2(\Sigma_1) \times L^2(\Omega_1)
\end{align*}
is dense in $L^2(\Sigma_1) \times \tilde{S}_{V,q}$ with the $L^2(\Sigma_1)\times L^2(\Omega_1)$ topology. Here $P$ denotes the Poisson operator from Definition \ref{defi:Poisson}.
\end{lem}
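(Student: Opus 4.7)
The plan is to invoke Hahn-Banach in the Hilbert space $L^2(\Sigma_1)\times L^2(\Omega_1)$: density of $\mathcal{R}_{bb}$ in $L^2(\Sigma_1)\times \tilde{S}_{V,q}$ reduces to showing that every pair $(g_0,w_0)\in L^2(\Sigma_1)\times L^2(\Omega_1)$ in the $L^2$-annihilator of $\mathcal{R}_{bb}$ also annihilates every element of $L^2(\Sigma_1)\times \tilde{S}_{V,q}$. Fixing such a pair, I would introduce the adjoint boundary value problem
\begin{equation*}
-\D \varphi + V\varphi = \overline{w_0}\chi_{\Omega_1} \text{ in } \Omega, \quad \p_\nu \varphi + q\varphi = \overline{g_0} \text{ on } \Sigma_1, \quad \varphi = 0 \text{ on } \Sigma_2\cup \Gamma_0,
\end{equation*}
where $\Gamma_0 := \p\Omega\setminus(\Sigma_1\cup\Sigma_2)$. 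This is well-posed (generically in $V,q$) by the same Lax-Milgram/Fredholm machinery that governs the forward problem; here the real-valuedness of $V$ and $q$ is what makes the forward operator formally self-adjoint.

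The engine of the proof is then to compute $B_{V,q}(u,\varphi)$ for $u=Pf$, $f\in C_c^\infty(\Sigma_2)$, in two ways. Integrating by parts onto $u$ and using the forward PDE, the Robin condition on $\Sigma_1$, and $\varphi|_{\Sigma_2\cup\Gamma_0}=0$ immediately gives $B_{V,q}(u,\varphi)=0$. Integrating by parts onto $\varphi$ instead and invoking the adjoint PDE together with the vanishing of $u$ on $\Gamma_0$ produces
\begin{equation*}
B_{V,q}(u,\varphi) = \int_{\Omega_1} u\, w_0\, dx + \int_{\Sigma_1} u\, g_0\, d\mathcal{H}^{n-1} + \int_{\Sigma_2} f\,\overline{\p_\nu \varphi}\, d\mathcal{H}^{n-1}.
\end{equation*}
The annihilation hypothesis, applied also to $\overline{f}$ (legitimate since $\overline{Pf}=P\overline{f}$ by the reality of $V,q$), kills the first two summands, so by the arbitrariness of $f\in C_c^\infty(\Sigma_2)$ I conclude $\p_\nu \varphi = 0$ on $\Sigma_2$.

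Combined with the imposed $\varphi|_{\Sigma_2}=0$, this furnishes vanishing Cauchy data for the equation $-\D\varphi+V\varphi=0$, which is satisfied in $\Omega\setminus\Omega_1$ since the bulk source is supported in $\Omega_1$, on the open piece $\Sigma_2\subset\p(\Omega\setminus\Omega_1)$. Boundary unique continuation for Schrödinger operators with $L^\infty$ potential then forces $\varphi\equiv 0$ on the connected component of $\Omega\setminus\Omega_1$ touching $\Sigma_2$; invoking the connectedness assumption on $\Omega\setminus\Omega_1$ (part of the standing hypotheses from Section \ref{sec:not_sets}) extends this to all of $\Omega\setminus\Omega_1$. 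Since then $\varphi$ and $\p_\nu\varphi$ vanish in a one-sided neighborhood of $\Sigma_1$, the Robin identity immediately yields $g_0=0$ on $\Sigma_1$.

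To finish, since $\varphi\in H^1(\Omega)$ vanishes outside $\Omega_1$ its trace from the interior on $\p\Omega_1$ vanishes as well; testing the adjoint weak formulation against cut-offs $\eta\in C_c^\infty(\Omega)$ avoiding $\Sigma_1$ additionally forces the interior conormal trace $\p_\nu\varphi|_{\p\Omega_1}$ to vanish. Thus $\varphi|_{\Omega_1}$ has zero Cauchy data on $\p\Omega_1$ while solving $-\D\varphi+V\varphi=\overline{w_0}$ inside. For any $v\in\tilde{S}_{V,q}$, two integrations by parts yield
\begin{equation*}
\int_{\Omega_1}\overline{w_0}\,v\,dx = \int_{\Omega_1}\varphi(-\D v+Vv)\,dx = 0,
\end{equation*}
as $v$ is a weak solution on $\Omega$. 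Complex conjugation, together with the reality of $V$ which ensures $\overline{v}\in\tilde{S}_{V,q}$, delivers the sought orthogonality of $w_0$ to $\tilde{S}_{V,q}$, completing the Hahn-Banach step. The most delicate ingredient is the boundary unique continuation from $\Sigma_2$, but for an $L^\infty$ potential Schrödinger equation on a $C^2$ domain this is a classical Carleman-based result standard in the Calder\'on-problem literature.
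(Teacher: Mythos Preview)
Your proof is correct and follows essentially the same approach as the paper's: Hahn--Banach reduction, introduction of the dual problem with source $(g_0,w_0)$, integration by parts to extract vanishing Cauchy data on $\Sigma_2$, unique continuation in $\Omega\setminus\overline{\Omega_1}$, and a final integration by parts on $\Omega_1$ using the vanishing Cauchy data on $\partial\Omega_1$. The only cosmetic difference is that you organize the integration by parts via the bilinear form $B_{V,q}(u,\varphi)$ evaluated two ways and are more explicit about complex conjugation, whereas the paper works directly from the equations; the underlying argument is the same.
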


We remark that substantial generalizations are possible for these type of approximation results. These involve both approximations in stronger topologies and more general Schrödinger type operators. We refer to Lemma \ref{Lem:Runge_with_magnetic_potentials} and the discussion in Sections \ref{sec:Q11} and \ref{sec:Runges} for more on this.

Similar approximation results also hold in the setting of the problem \eqref{eq:frac_Schr}, see for instance Proposition \ref{prop:s_Runge}. Furthermore, an analogous uniqueness result as in Theorem \ref{thm:Q11/2} can also proved in this situation, see Theorem \ref{thm:Q1s}. In spite of the degenerate character of the equation \eqref{eq:frac_Schr} this is reduced to the construction of CGO solutions to a non-degenerate Schrödinger type problem and an application of the Runge approximation result.

We next turn to a variant of the problem \eqref{eq:frac_Schr} and investigate the question (Q2) for this model. Here we follow the usual notation from the Caffarelli-Silvestre extension which was also already used in \eqref{eq:CS_ext} and assume that $\Omega \subset \R^{n+1}$ is an open set. We emphasise that we thus increase the dimension of the problem under consideration by one with respect to our discussion of the question (Q1).
Here, in order to simplify the geometric setting which in partial data problems is not uncommon, we assume that
$\overline{\Sigma_1}:= \overline{\Omega}\cap \{x_{n+1} =0\}$ and that $\Sigma_2 = \partial \Omega \setminus \overline{\Sigma}_1$. In contrast of considering \eqref{eq:frac_Schr} we study a slight variation of it. For $q\in L^{\infty}(\Sigma_1) $, $V \in L^{\infty}(\Omega)$ we investigate solutions to
\begin{align}
\label{eq:CGO_model}
\begin{split}
\nabla \cdot x_{n+1}^{1-2s} \nabla u  + V x_{n+1}^{1-2s} u & = 0 \mbox{ in } \Omega,\\
u & = f \mbox{ on } \Sigma_2,\\
\lim\limits_{x_{n+1} \rightarrow 0} x_{n+1}^{1-2s}\p_{n+1} u + q u & = 0 \mbox{ on } \Sigma_1. 
\end{split}
\end{align}
Here, for instance, $f\in C_c^{\infty}(\Sigma_2)$. Using the same ideas as in Section \ref{sec:well_posed1}, it can be shown that this problem is well-posed if zero is not an  eigenvalue with respect to our mixed data setting. Thus, an associated Dirichlet-to-Neumann map can be (formally) defined as the map,
\begin{align*}
\Lambda_{V,q}:  f \mapsto \lim\limits_{x_{n+1}\rightarrow \partial \Omega} x_{n+1}^{1-2s} \p_{\nu} u|_{\Sigma_2}.
\end{align*}
We refer to Section \ref{sec:CGO} for a more detailed discussion of the Dirichlet-to-Neumann map associated with \eqref{eq:CGO_model} and the function spaces it acts on.
Now no longer imposing conditions on the support of $V$, we seek to recover both $V$ and $q$. Since this implies that Runge approximation methods are no longer applicable in the interior of $\Omega$, we instead rely on a new Carleman inequality for the equation \eqref{eq:CGO_model} (see Proposition \ref{prop:Carl} and Corollary \ref{cor:Carl}) and by duality construct CGO solutions from it:

 \begin{prop}\label{CGO_construction}
 Let $\Omega\subset \mathbb R^{n+1}_+$, $n\geq 3$, be an open, bounded smooth domain. Assume that $\Sigma_1 = \partial\Omega \cap (\mathbb R^n \times \{0\})$ is a relatively open, non-empty subset of the boundary, and that $\Sigma_2 = \p\Omega\setminus \overline{\Sigma_1}$. Let $s\in [1/2,1)$ and let $V\in L^{\infty}(\Omega)$ and $q \in L^{\infty}(\Sigma_1)$.
 Then there exists a non-trivial solution $u\in H^1(\Omega,x_{n+1}^{1-2s})$ of the problem
 
 \begin{align} 
 \label{eq:weighted_problem} 
 \begin{split}
\nabla \cdot x_{n+1}^{1-2s} \nabla u + x_{n+1}^{1-2s} V u & = 0 \mbox{ in } \Omega,\\
\lim\limits_{x_{n+1}\rightarrow 0} x_{n+1}^{1-2s} \p_{n+1} u+ q u & = 0 \mbox{ on } \Sigma_1,
\end{split}\end{align}
 
 \noindent of the form $u(x)=e^{\xi' \cdot x'}(e^{ik'\cdot x' + i k_{n+1} x_{n+1}^{2s}}+r(x))$, where $k\in\R^{n+1}$, $\xi'\in \mathbb C^n$ is such that $\xi'\cdot\xi' =0$, $k\cdot \xi'=0$, and
 \begin{itemize}
\item if $s=1/2$, then $\|r\|_{L^2(\Omega)} = O(|\xi'|^{-\frac{1}{2}})$, $\|r\|_{H^1(\Omega)} = O(|\xi'|^{\frac{1}{2}})$ and $\|r\|_{L^2(\Sigma_1)} = O(1)$; 
 \item if $s>1/2$, then $\|r\|_{L^2(\Omega, x_{n+1}^{1-2s})} = O(|\xi'|^{-s})$, $ \|r\|_{H^1(\Omega, x_{n+1}^{1-2s})} = O(|\xi'|^{1-s})$ and $\|r\|_{L^2(\Sigma_1)} = O(|\xi'|^{1-2s})$.
 \end{itemize}
 \end{prop}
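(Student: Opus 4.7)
The plan is to construct $r$ by the standard ``Carleman estimate plus duality'' scheme for CGO solutions, adapted to the degenerate, mixed-boundary setting of \eqref{eq:weighted_problem}. First, I would set the main profile $u_0(x) := e^{\xi'\cdot x'}e^{ik'\cdot x' + ik_{n+1}x_{n+1}^{2s}}$ and compute its residual when plugged into the equation. Using $\xi'\cdot \xi' = 0$ and $k\cdot\xi' = 0$ (the latter forcing $\xi'\cdot k' = 0$ since $\xi' \in \C^n$), a direct computation gives
$$\nabla\cdot x_{n+1}^{1-2s}\nabla u_0 = -x_{n+1}^{1-2s}\bigl(|k'|^2 + 4s^2 k_{n+1}^2 x_{n+1}^{4s-2}\bigr)u_0,$$
and $\lim_{x_{n+1}\to 0} x_{n+1}^{1-2s}\partial_{n+1}u_0 = 2si k_{n+1}\, u_0|_{\Sigma_1}$. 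Hence $u_0$ solves the unperturbed PDE up to a bounded effective potential $\widetilde V := V - |k'|^2 - 4s^2 k_{n+1}^2 x_{n+1}^{4s-2}\in L^\infty(\Omega)$ (bounded because $s\geq 1/2$ and $\Omega$ is bounded), while on $\Sigma_1$ it produces a uniformly bounded Robin defect $(2si k_{n+1}+q)\,u_0|_{\Sigma_1}$.

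Next I would substitute $u = u_0 + e^{\xi'\cdot x'}r$ into \eqref{eq:weighted_problem} and conjugate by $e^{-\xi'\cdot x'}$, which leads to the system
\begin{align*}
L_{\xi'} r := \nabla\cdot x_{n+1}^{1-2s}\nabla r + 2x_{n+1}^{1-2s}\xi'\cdot\nabla' r + x_{n+1}^{1-2s}V r &= F \text{ in }\Omega,\\
\lim_{x_{n+1}\to 0} x_{n+1}^{1-2s}\partial_{n+1} r + qr &= G \text{ on }\Sigma_1,
\end{align*}
where $F\in L^2(\Omega, x_{n+1}^{1-2s})$ and $G\in L^2(\Sigma_1)$ are bounded uniformly in $|\xi'|$. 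The Dirichlet trace of $r$ on $\Sigma_2$ can be arranged to vanish by first subtracting a suitable extension of $-e^{-\xi'\cdot x'}u_0|_{\Sigma_2}$ into $\Omega$, which only adds further uniformly bounded contributions to $F$ and $G$.

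At this point I would invoke the new Carleman estimate of Proposition \ref{prop:Carl} and Corollary \ref{cor:Carl}, applied to the formal adjoint $L_{\xi'}^*$, which has the same structure but with the drift sign reversed. Schematically the estimate reads
$$|\xi'|^{2s}\|\varphi\|_{L^2(\Omega, x_{n+1}^{1-2s})}^2 + |\xi'|^{4s-1}\|\varphi\|_{L^2(\Sigma_1)}^2 \lesssim \|L_{\xi'}^*\varphi\|_{L^2(\Omega, x_{n+1}^{1-2s})}^2,$$
with the corresponding unweighted scaling $|\xi'|$ in the case $s=1/2$. On the range of $L_{\xi'}^*$, I would define the linear functional $\ell(L_{\xi'}^*\varphi) := \langle F,\varphi\rangle_{L^2(\Omega, x_{n+1}^{1-2s})} + \langle G,\varphi\rangle_{L^2(\Sigma_1)}$; by Cauchy--Schwarz and the Carleman estimate, $\ell$ is well-defined and bounded with exactly the rate announced in the proposition. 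A Hahn--Banach extension followed by the Riesz representation theorem in the appropriate weighted $L^2$ spaces then produces $r$ solving the conjugated problem with the stated $L^2$-bounds on $\Omega$ and $\Sigma_1$. The $H^1$-bound would finally be obtained by testing the conjugated equation against $r$ and using the energy identity together with the $L^2$-bound already in hand.

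The principal obstacle is the tailored Carleman estimate itself, which must juggle three features simultaneously: the degenerate weight $x_{n+1}^{1-2s}$, the complex first-order drift $2x_{n+1}^{1-2s}\xi'\cdot\nabla'$ with large parameter $|\xi'|$, and the mixed boundary conditions (Dirichlet on $\Sigma_2$ and an inhomogeneous Robin-type condition with the unknown potential $q$ on $\Sigma_1$). The weaker boundary trace available in the degenerate case $s>1/2$ is precisely what will degrade the boundary remainder bound to $O(|\xi'|^{1-2s})$ and split the statement into two sub-cases; the case $s=1/2$ corresponds to the sharp non-degenerate scaling $|\xi'|^{-1/2}$ in the bulk. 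Once the Carleman estimates of Proposition \ref{prop:Carl} and Corollary \ref{cor:Carl} are in hand, the remaining duality argument is essentially routine.
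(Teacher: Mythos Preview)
Your overall strategy---compute the defect of the explicit profile, set up a conjugated inhomogeneous problem for $r$, and solve it by a Hahn--Banach/Riesz duality argument driven by the Carleman estimate of Corollary~\ref{cor:Carl}---is exactly the one the paper uses. However, there are two substantive gaps in how you run the duality.

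First, your functional $\ell$ is defined on the range of $L_{\xi'}^*$ alone, but the Carleman estimate of Corollary~\ref{cor:Carl} does \emph{not} make the map $L_{\xi'}^*\varphi\mapsto\varphi$ injective: it controls $\varphi$ only in terms of the \emph{pair} $(L_{\xi'}^*\varphi,\ \mathcal{B}_s\varphi)$, where $\mathcal{B}_s\varphi=\lim_{x_{n+1}\to0}x_{n+1}^{1-2s}\partial_{n+1}\varphi+q\varphi$ is the Robin trace. (Your schematic estimate also has the wrong exponents---the bulk weight is $|\xi'|^2$, the boundary weight is $|\xi'|^{2s}$---and is missing this boundary term on the right; compare \eqref{eq:Carl22}.) The paper accordingly defines the functional on the \emph{product} range $\tilde{L}^s_{\xi',V}(\mathcal{C})\times\mathcal{B}_s(\mathcal{C})\subset (H^1_{sc}(\Omega,x_{n+1}^{1-2s}))^*\times L^2_{sc}(\Sigma_1)$. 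Hahn--Banach and Riesz then produce a pair $(r_1,r_2)\in H^1_{sc}(\Omega,x_{n+1}^{1-2s})\times L^2(\Sigma_1)$, and a separate, non-trivial step---testing against $u\in x_{n+1}^{2s}C_c^\infty(\overline{\Omega})$---is needed to show $r_1|_{\Sigma_1}=r_2$, so that $r:=r_1$ actually satisfies the Robin condition. Without this product-space setup you cannot recover the boundary equation, and the $L^2(\Sigma_1)$ bound on $r$ does not fall out of the functional norm directly; it comes from the $|\xi'|^{s-1}$-weighted second component.

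Second, your step ``arrange the Dirichlet trace of $r$ on $\Sigma_2$ to vanish'' is unnecessary: the CGO in Proposition~\ref{CGO_construction} is only required to satisfy the bulk equation and the Robin condition on $\Sigma_1$; no condition is imposed on $\Sigma_2$ (that is where the Dirichlet data for the inverse problem live). The paper does not perform any such subtraction. What \emph{is} needed on $\Sigma_2$ is that the test functions $\varphi$ in the Carleman estimate vanish there together with their weighted normal derivative; this is why the paper works with the class $\mathcal{C}$ from \eqref{eq:C} and then invokes the density result of Proposition~\ref{prop:dense} to upgrade the identity from $\mathcal{C}$-test functions to a genuine weak formulation. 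The $H^1$ bound on $r$ is obtained not by an a posteriori energy test, but already from the Riesz representation in $H^1_{sc}$.
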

 
\begin{rmk}
 \label{rmk:s1/2}
 We remark that by inspection of the proof given in Section \ref{sec:CGO} below, one observes that for $s=\frac{1}{2}$ one only needs to assume that $n\geq 2$ and may work with $\xi'\in \R^{n+1}$ instead of $\xi' \in \R^n$.
 \end{rmk}
 
\begin{rmk}
\label{rmk:unbdd}
Instead of considering CGOs of the form 
\begin{align*}
u(x)=e^{\xi' \cdot x'}(e^{ik'\cdot x' + i k_{n+1} x_{n+1}^{2s}}+r(x)),
\end{align*}
by the same arguments we can also construct CGOs of the form
\begin{align*}
u(x)=e^{\xi' \cdot x'}(e^{ik'\cdot x' - k_{n+1} x_{n+1}^{2s}}+r(x))
\end{align*}
for $k_{n+1}>0$ which thus have some decay behaviour in the $x_{n+1}$-direction in the amplitude.
\end{rmk}

We emphasize that the CGOs here contain new ingredients compared to the classical CGOs in that the amplitude contains the normal contribution $k_{n+1} x_{n+1}^{2s}$ instead of a linear phase. Also, in order to avoid dealing with the non-degeneracy of the equation, with respect to the classical CGOs, we loose one dimension in the case $s \in (\frac{1}{2},1)$, having to restrict ourselves to $n \geq 3$ (and thus $n+1\geq 4$).

Relying on this new family of CGO solutions for $s\in (\frac{1}{2},1)$, we give a complete answer to the question (Q2) for $n\geq 3$: 

\begin{thm}
\label{thm:unique}
Let $\Omega \subset \mathbb R^{n+1}_+$, $n\geq 3$, be an open, bounded and smooth domain. Assume that $\Sigma_1 := \p\Omega \cap \{x_{n+1}=0\}$ and $\Sigma_2 \subset \p\Omega\setminus\Sigma_1$ are two relatively open, non-empty subsets of the boundary such that $\overline{\Sigma_1\cup\Sigma_2}=\p\Omega$. Let $s\in (1/2,1)$. If the potentials $q_1, q_2 \in L^{\infty}(\Sigma_1)$ and $V_1, V_2 \in L^{\infty}(\Omega)$ relative to problem \eqref{eq:CGO_model} are such that $$\Lambda_1:= \Lambda_{s,V_1, q_1}=\Lambda_{s,V_2,q_2}=:\Lambda_2\;,$$ 
\noindent then $q_1=q_2$ and $V_1 = V_2$.
\end{thm}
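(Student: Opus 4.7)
The plan is to follow the Calder\'on paradigm, adapted to the degenerate mixed problem \eqref{eq:CGO_model}: derive an Alessandrini-type integral identity from $\Lambda_1=\Lambda_2$, test it against the CGO solutions supplied by Proposition \ref{CGO_construction}, and then extract $q$ and $V$ one after the other via Fourier-type uniqueness arguments tailored to the weight $x_{n+1}^{1-2s}$ and to the non-linear normal phase $e^{ik_{n+1}x_{n+1}^{2s}}$.

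First I would set up the integral identity. Starting from the bilinear form analogous to \eqref{eq:Bforms} (without the magnetic terms) and integrating by parts twice, $B_{V_j,q_j}(u_1,u_2)$ can be expressed as a pairing on $\Sigma_2$ involving $\Lambda_j(u_1|_{\Sigma_2})$ whenever the $u_j$ solve \eqref{eq:weighted_problem} with potentials $(V_j,q_j)$. Since the potentials are real, symmetry of $B$ combined with $\Lambda_1=\Lambda_2$ will yield the Alessandrini identity
\[
\int_{\Omega}(V_1-V_2)\,u_1u_2\,x_{n+1}^{1-2s}\,dx+\int_{\Sigma_1}(q_1-q_2)\,u_1u_2\,d\mathcal{H}^n=0
\]
valid for every pair of such solutions.

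Into this identity I would insert CGOs from Proposition \ref{CGO_construction}. Given $\kappa'\in\R^n$ and $\mu\in\R$, pick orthonormal $\hat\eta,\hat\rho\in\R^n$ both orthogonal to $\kappa'$ (possible thanks to $n\geq 3$), set $\xi_1'=\tau(\hat\eta+i\hat\rho)$ and $\xi_2'=-\xi_1'$, and choose $k_j'\in\{\hat\eta,\hat\rho\}^\perp$ together with $k_{n+1,j}\in\R$ satisfying $k_1'+k_2'=\kappa'$ and $k_{n+1,1}+k_{n+1,2}=\mu$; then $\xi_j'\cdot\xi_j'=0$ and $k_j'\cdot\xi_j'=0$ hold automatically. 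The growing exponentials cancel in the product $u_1u_2$, and the remainder bounds $\|r_j\|_{L^2(\Omega,x_{n+1}^{1-2s})}=O(\tau^{-s})$ and $\|r_j\|_{L^2(\Sigma_1)}=O(\tau^{1-2s})\to 0$ (the latter using $s>1/2$) let me pass to $\tau\to\infty$ to obtain
\[
\int_{\Omega}(V_1-V_2)\,e^{i\kappa'\cdot x'+i\mu x_{n+1}^{2s}}\,x_{n+1}^{1-2s}\,dx+\int_{\Sigma_1}(q_1-q_2)\,e^{i\kappa'\cdot x'}\,d\mathcal{H}^n=0
\]
for arbitrary $\kappa'\in\R^n$ and $\mu\in\R$.

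To close the argument I would separate the two contributions. Keeping $\kappa'$ fixed and letting $|\mu|\to\infty$, the change of variables $t=x_{n+1}^{2s}$ turns the bulk integrand into an $L^1$ function in $t$ on a bounded interval (the Jacobian combined with $x_{n+1}^{1-2s}$ produces the weight $t^{(1-2s)/s}$, integrable near $0$ since $s<1$), so Riemann--Lebesgue forces the bulk integral to vanish. What remains is the Fourier transform of $(q_1-q_2)\chi_{\Sigma_1}$ at $\kappa'$, vanishing for every $\kappa'\in\R^n$, hence $q_1=q_2$ on $\Sigma_1$. Reinjecting this and using Fourier uniqueness first in $x'$ and then, after the same $t=x_{n+1}^{2s}$ substitution, in $t$, yields $V_1=V_2$ in $\Omega$. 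The main obstacle, which is precisely what Proposition \ref{CGO_construction} resolves, is the existence of CGOs with the adapted normal phase $e^{ik_{n+1}x_{n+1}^{2s}}$ and with trace estimates on $\Sigma_1$ sharp enough to survive the limit; both the restriction $s>1/2$ (needed for $\|r_j\|_{L^2(\Sigma_1)}\to 0$) and the hypothesis $n\geq 3$ (needed to choose $\hat\eta,\hat\rho$ orthogonal to $\kappa'$) enter exactly here.
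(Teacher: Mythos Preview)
Your plan is correct and follows the same overall strategy as the paper: derive the Alessandrini identity, insert the CGOs of Proposition~\ref{CGO_construction} with complementary phase vectors so that the exponential weights cancel, use the decay $\|r_j\|_{L^2(\Omega,x_{n+1}^{1-2s})}=O(\tau^{-s})$ and $\|r_j\|_{L^2(\Sigma_1)}=O(\tau^{1-2s})$ (the latter requiring $s>1/2$) to pass to the limit, and then conclude by a Fourier argument after the substitution $t=x_{n+1}^{2s}$.

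The only notable deviation is in the final separation of the bulk and boundary contributions. The paper packages
\[
\Big(\tfrac{\chi_{\Omega}V}{2s}\Big)(x',t^{1/2s})\,t^{(1-2s)/s}\chi_{\{t>0\}}+\delta_{t=0}\,(\chi_{\Sigma_1}q)(x')
\]
as a single tempered distribution in $(x',t)$, shows its Fourier transform vanishes identically, and then reads off $V=0$ and $q=0$ by testing against suitable functions. You instead fix $\kappa'$ and send $|\mu|\to\infty$, using Riemann--Lebesgue on the $t$-integral (the weight $t^{(1-2s)/s}$ being integrable for $s<1$) to kill the bulk term first and isolate $\widehat{(q_1-q_2)\chi_{\Sigma_1}}(\kappa')=0$. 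Your route is slightly more elementary in that it avoids tempered distributions; the paper's route handles both unknowns in one stroke. Either way the argument closes.
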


This provides the first uniqueness result combining both \emph{local} and \emph{nonlocal} features of the described form in Calder\'on type problems. We hope that these ideas could also be of interest in the study of \eqref{eq:nonlocal}.

In the case $s= \frac{1}{2}$, the lack of sufficiently strong boundary decay estimates only allows us to recover $V$ given $q$ (see Proposition \ref{cor:unique} and Remark \ref{rmk:Lopatinskii}). We seek to study improvements of this in future work.

\subsection{Connection to the literature}
\label{sec:lit}
The study of nonlocal fractional Calder\'on type problems has been very active in the last years: After its formulation and the study of its uniqueness properties in \cite{GSU16}, optimal stability and uniqueness in scaling critical spaces have been addressed in \cite{RS17, RS17d}. In \cite{GRSU18} single measurement reconstruction results have been proved, see also \cite{HL19, HL20} for full-data reconstruction results by monotonicity methods. Further, variable coefficient versions were studied in \cite{GLX17,C20} and magnetic potentials were introduced in \cite{BGU18,C20a,CLR18,L20}. We refer to the articles \cite{LLR19,Lin20,L20a,CMR20,RS18,GFR19} for further variants of related nonlocal problems. Reviews for the fractional Calder\'on problem with additional literature can be found in \cite{S17, R18}. 

In all these works, a striking flexibility property of nonlocal equations is used, see also \cite{DSV14,DSV16,RS17a,Rue17a,GFR20}: As a consequence of the antilocality of the fractional Laplacian (see \cite{V93}), one obtains that the set of solutions to a given fractional Schrödinger problem with scaling-critical or subcritical potential in $\Omega$ is already dense in $L^2(\Omega)$. This allows one to prove uniqueness and reconstruction results by means of Runge approximation properties. These often lead to substantially stronger results for the nonlocal inverse problems than the known ones (e.g. partial data, low regularity) for the classical local case. 
Apart from the intrinsic interest in the described effects of anti- and nonlocality, these nonlocal inverse problems are also of relevance in various applications and in order to obtain an improved understanding of the classical, local Calder\'on problem.

By virtue of the Caffarelli-Silvestre extension, the described fractional Schrödinger inverse problems are also closely connected to (degenerate) versions of the Robin inverse problem as proposed and formulated for instance in \cite{KS95,KSV96,SVX98,BCC08}. These problems arise in the indirect detection of corrosion through electrostatic measurements and in thermal imaging techniques. Mathematically, under sufficiently strong regularity conditions on the potentials and the measurement sets, these can be addressed using ideas on unique continuation, see for instance \cite{CFJL03,S07,AS06, C04, BBL16,BCH11,HM19} for uniqueness, stability and reconstruction results on the Robin inverse problem. In contrast to our setting which combines unknown potentials on the boundary and in the bulk, the literature on the inverse Robin boundary problem however typically does \emph{not} consider a combination of these two challenges. Typically, in works on the inverse Robin problem, a setting complementary to the classical Calder\'on problem is studied, where it is assumed that the bulk properties of the material are known, while reconstruction at inaccessible boundaries is explored.

The classical, local Calder\'on problem is a prototypical and well-studied elliptic inverse problem. It had originally been formulated and studied in its linearized version by Calder\'on, see \cite{C80}. For $n\geq 3$ the uniqueness question for the full, nonlinear problem had been solved in the seminal work \cite{SU87} by introducing CGO solutions. For recent, low regularity contributions on uniqueness, we refer to \cite{CR16, HT13, H15}. Also stability \cite{A88}, reconstruction \cite{N88} and partial data \cite{KS14} problems have been addressed. 
We refer to \cite{U09} for a more detailed survey on the results for the Calder\'on problem.

In this article, we seek to combine both effects, local and nonlocal, with the objective of connecting these and providing new perspectives on them. Studying boundary and bulk potentials simultaneously, we thus combine both the local (bulk) effects and the nonlocal (boundary) effects of the two classes of inverse problems described above. By studying the questions (Q1) and (Q2) outlined above, we illustrate that either effect can dominate. Combining the two settings we investigate an interesting model problem in its own right and hope to to derive ideas and results connecting the local and nonlocal realms.

\subsection{Organization of the remainder of the article}
The remainder of the article is organized as follows. In the next section, we introduce our notation and recall some results on weighted Sobolev spaces. Next, we discuss the well-posedness of problems \eqref{eq:Schroedinger} and \eqref{eq:frac_Schr}. Building on this, in Sections \ref{sec:Q11} and \ref{sec:Runges} we address the question (Q1). Here we also provide the proofs of Theorem \ref{thm:Q11/2} and Lemma \ref{lem:sim_dense}. In Section \ref{sec:Carl} we prove a new Carleman estimate for the generalized Caffarelli-Silvestre extension in \eqref{eq:CGO_model}. Arguing by duality, we derive the existence of CGO solutions for these in Section \ref{sec:CGO} and thus present the proof of Proposition \ref{CGO_construction}. Building on this, we
provide the proof of Theorem \ref{thm:unique} there. Last but not least, we provide a proof of the density result of Proposition \ref{prop:dense} in the appendix.

\section{Notation and Auxiliary Results}
\label{sec:notation}

\subsection{Function spaces}
\label{sec:notation_spaces}
In the following we will make use a number of function spaces. Unless explicitly stated, all function spaces consist of complex valued functions. 

\subsubsection{Weighted Sobolev spaces}
We will fix $s\in (0,1)$ and assume that $\Omega \subset \R^n$ is an open, bounded, $C^2$-regular domain. We let $d: \Omega \rightarrow [0,\infty)$ denote a $C^1$-regular function which close to the boundary $\partial \Omega$ measures the distance to $\partial \Omega$ and is extended to $\Omega$ in a $C^1$-regular way. Then we set:
\begin{align*}
L^2(\Omega, d^{1-2s}) &:= \{u: \Omega \rightarrow \C \mbox{ measureable} : \ \|d^{\frac{1-2s}{2}} u\|_{L^2(\Omega)} <\infty \},\\
H^1(\Omega, d^{1-2s}) &:= \{u: \Omega \rightarrow \C \mbox{ measureable} : \ \|d^{\frac{1-2s}{2}} u\|_{L^2(\Omega)} + \|d^{\frac{1-2s}{2}} \nabla u\|_{L^2(\Omega)}<\infty \}.
\end{align*}

We further use the following notation for fractional Sobolev spaces:
\begin{align*}
H^s(\Omega) := \{u|_{\Omega}: \ u \in H^s(\R^n) \},
\end{align*}
and equip it with the quotient topology
\begin{align*}
\|u\|_{H^s(\Omega)}:= \inf\{\|U\|_{H^s(\R^n)}: \ U|_{\Omega} = u\}.
\end{align*}
It will also be convenient to work with functions obtained by completion of smooth functions with compact support:
\begin{align*}
\widetilde{H}^s(\Omega) := \mbox{ closure of } C^{\infty}_c(\R^n) \mbox{ in } H^s(\R^n). 
\end{align*}
We remark that in our setting of sufficiently regular domains, we have that
\begin{align*}
\widetilde{H}^s(\Omega)= H^{s}_{\overline{\Omega}}, 
\end{align*}
where $H^{s}_{\overline{\Omega}}:=\{u \in H^s(\R^n): \ \supp(u)\subset \overline{\Omega}\}$. Working in charts, similar definitions hold for functions on (sub)manifolds.

We recall the following extension and trace estimates which we will be using for the weighted $H^1(\Omega, d^{1-2s})$ spaces. We remark that both Lemmas \ref{lem:trace} and \ref{lem:extension} are not new and had first been proved in \cite{N93}. We only provide a (rough) argument for these for completeness and the convenience of the reader.

\begin{lem}
\label{lem:trace}
Let $\Omega \subset \R^n$ be an open, bounded, $C^2$-regular set and let $u\in H^1(\Omega, d^{1-2s})$. Then there exists a continuous trace operator into $H^{s}(\partial \Omega)$, i.e. $u|_{\partial \Omega}$ exists in a weak sense, coincides with $u|_{\partial \Omega}$ if $u \in C^{\infty}(\Omega)$ and 
\begin{align*}
\|u|_{\partial \Omega}\|_{H^s(\partial \Omega)} \leq C \|u\|_{H^1(\Omega, d^{1-2s})}.
\end{align*}
\end{lem}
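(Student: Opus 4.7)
The strategy is to reduce the estimate to a model half-space problem by a partition of unity and a $C^2$-flattening of the boundary, and then to establish the model inequality via a tangential Fourier transform together with a one-dimensional weighted trace bound. Since the final estimate is for an abstract weak trace, I would first argue for smooth representatives and then extend by density; this is harmless here because for $s\in(0,1)$ the weight $d^{1-2s}$ is Muckenhoupt $A_2$, so $C^\infty(\overline{\Omega})$ is dense in $H^1(\Omega, d^{1-2s})$.

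First, using the $C^2$-regularity of $\partial\Omega$, I would choose a finite open cover $\{U_j\}_{j=0}^N$ of $\overline{\Omega}$ with $\overline{U_0}\subset \Omega$ and each $U_j$, $j\geq 1$, a coordinate neighbourhood of a boundary point, together with a subordinate partition of unity $\{\chi_j\}$. On the support of $\chi_0$ the weight $d^{1-2s}$ is bounded from above and below by positive constants, so $\chi_0 u \in H^1(\Omega)$ in the classical sense and has vanishing trace. For each $j\geq 1$, a $C^2$-diffeomorphism $\Phi_j\colon U_j\to B_1^+(0)$ straightens $\partial\Omega$ in such a way that $d\circ \Phi_j^{-1}$ is comparable to $x_n$, with $C^1$ control depending only on $\Phi_j$. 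Pushing forward, it suffices to prove that for every $v\in H^1(\R^n_+, x_n^{1-2s})$ with compact support in $\overline{B_1^+(0)}$,
\begin{equation*}
\|v(\cdot,0)\|_{H^s(\R^{n-1})} \leq C\, \|v\|_{H^1(\R^n_+,\, x_n^{1-2s})}.
\end{equation*}

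Next, I would take the Fourier transform in the tangential variables $x'\in\R^{n-1}$ and write $\hat v(\xi',x_n)$. A rescaling $t=\langle\xi'\rangle\, x_n$ reduces the desired bound, uniformly in $\xi'$, to the one-dimensional estimate
\begin{equation*}
|w(0)|^2 \leq C \int_0^\infty \bigl(|w(t)|^2+|w'(t)|^2\bigr)\, t^{1-2s}\, dt, \qquad w\in H^1(\R_+, t^{1-2s}).
\end{equation*}
This follows from the fundamental theorem of calculus and Cauchy--Schwarz: for smooth compactly supported $w$ and any $T>0$,
\begin{equation*}
|w(0)|^2 \leq 2|w(T)|^2 + 2\Bigl(\int_0^T \tau^{2s-1}\, d\tau\Bigr) \int_0^T |w'(\tau)|^2 \tau^{1-2s}\, d\tau,
\end{equation*}
which is finite because $s\in(0,1)$ implies $2s-1>-1$. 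Averaging in $T$ over a bounded interval and undoing the rescaling yields the two control terms of $\|v(\cdot,0)\|_{H^s(\R^{n-1})}^2 \sim \int \langle\xi'\rangle^{2s}|\hat v(\xi',0)|^2 d\xi'$ from the two pieces of $\|v\|_{H^1(\R^n_+,\,x_n^{1-2s})}^2$.

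Finally, I would assemble the local estimates via the partition of unity, check the consistency with classical restriction on $C^\infty(\Omega)\cap H^1(\Omega, d^{1-2s})$ (immediate from the half-space calculation for smooth test functions), and conclude by density. The main technical obstacle is the interplay between the degenerate weight and the tangential Fourier scaling: one has to be careful that the flattening $\Phi_j$ produces a weight truly equivalent to $x_n^{1-2s}$ with multiplicative constants depending only on the $C^2$-geometry, and that the one-dimensional bound is dimensionally compatible with the $\langle\xi'\rangle^{2s}$ factor arising on the trace side after rescaling. Beyond this, everything reduces to elementary weighted Hardy-type estimates.
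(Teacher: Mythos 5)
Your proof is correct and follows the same high-level localization strategy as the paper (partition of unity, $C^2$-flattening, reduction to a half-space model), but where the paper simply cites the flat estimate from \cite[Lemma 4.4]{RS17}, you prove it from scratch via a tangential Fourier transform, the rescaling $t=\langle\xi'\rangle x_n$, and a one-dimensional weighted inequality obtained from the fundamental theorem of calculus. This makes your argument self-contained; the scaling bookkeeping checks out, since the rescaling turns the weighted $L^2$ and gradient terms into $\langle\xi'\rangle^{2s-2}$ and $\langle\xi'\rangle^{2s}$ multiples of the normalized integrals, and after multiplying the one-dimensional bound by $\langle\xi'\rangle^{2s}$ and integrating in $\xi'$ one recovers exactly $\|v(\cdot,0)\|_{H^s(\R^{n-1})}^2 \lesssim \|v\|_{H^1(\R^n_+,x_n^{1-2s})}^2$. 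One detail to tighten: when you say ``averaging in $T$ over a bounded interval,'' you should average against the weight $T^{1-2s}\,dT$ rather than Lebesgue measure on $[0,1]$. Plain averaging controls $\int_0^1|w(T)|^2\,dT$, which for $s<\tfrac12$ is \emph{not} dominated by $\int_0^1|w(T)|^2T^{1-2s}\,dT$ (since $T^{1-2s}\le1$ there). With the weighted average, the left-hand side becomes $\bigl(\int_0^1 T^{1-2s}\,dT\bigr)|w(0)|^2 = \tfrac{1}{2-2s}|w(0)|^2$ and both right-hand terms land directly in the desired weighted norms, so the constant stays uniform in $s\in(0,1)$. Beyond that minor fix, the argument is sound and amounts to reconstructing the proof of the half-space lemma that the paper delegates to the reference.
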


\begin{proof}
The claim follows from the flat result (see for instance \cite[Lemma 4.4]{RS17} for this) and a partition of unity. Indeed, using boundary normal coordinates and a partition of unity $\{\eta_k\}$ whose elements have a sufficiently small support we obtain with $u_k = u \eta_k$ and $\tilde{u}_k(x):= u_k \circ \phi_k(x)$ where $\phi_k$ locally maps the boundary of $\Omega$ to the flat boundary $\{x_{n+1}=0\}$
\begin{align*}
& C(\|d^{\frac{1-2s}{2}} \nabla u\|_{L^2(\Omega)} + \|d^{\frac{1-2s}{2}} u\|_{L^2(\Omega)})
\geq \sum\limits_{k=1}^{M} \left( \|d^{\frac{1-2s}{2}} \nabla u_k\|_{L^2(\Omega)} + \|d^{\frac{1-2s}{2}} u_k\|_{L^2(\Omega)} \right)\\
& \geq C^{-1}\sum\limits_{k=1}^{M} \left( \|x_{n+1}^{\frac{1-2s}{2}} \nabla \tilde{u}_k\|_{L^2(\R^{n+1}_+)} + \|x_{n+1}^{\frac{1-2s}{2}} \tilde{u}_k\|_{L^2(\R^{n+1}_+)} \right) \\
& \geq  C^{-1}\sum\limits_{k=1}^{M} \|\tilde{u}_k\|_{H^s(\{x_{n+1}=0\})} \\
& \geq  C^{-1}\sum\limits_{k=1}^{M} \|u_k\|_{H^s(\partial \Omega)} 
\geq C^{-1} \|u\|_{H^s(\partial \Omega)}.
\end{align*}
Here $C>1$ is a generic constant which may change from line to line. In the estimates, we have used that $|\nabla \phi_k|$ can be chosen as small as desired in the support of $u_k$ (by possibly enlarging $M\in \N$) and that $\|\tilde{u}_k\|_{H^s(\{x_{n+1}=0\})} \sim  \|u_k \|_{H^s(\partial \Omega)} $ (see for instance \cite[Theorem 3.23]{McLean}).
\end{proof}

\begin{lem}
\label{lem:extension}
Let $\Omega \subset \R^n$ be an open, bounded, $C^2$-regular set. For $f\in H^{s}(\partial \Omega)$ there exists a continuous extension operator $E_s(f)$ into $H^1(\Omega, d^{1-2s})$, i.e. $E(f)|_{\partial \Omega} = f$ and
\begin{align*}
\|E_s f\|_{H^{1}(\Omega, d^{1-2s})} \leq C \|f\|_{H^s(\partial \Omega)}.
\end{align*}
\end{lem}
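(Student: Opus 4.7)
The plan is to mirror the structure of the proof of Lemma \ref{lem:trace}: localize on the boundary, flatten, invoke a known flat-case extension, and then glue the pieces back together. First I would fix a finite open cover $\{U_k\}_{k=1}^M$ of $\partial \Omega$ together with $C^2$-diffeomorphisms $\phi_k$ that straighten the boundary in each chart, mapping a model half-neighborhood $W_k \times [0, r_k) \subset \R^{n-1} \times \R_+$ onto $U_k \cap \overline{\Omega}$, and a partition of unity $\{\eta_k\}$ on $\partial \Omega$ subordinate to $\{U_k \cap \partial \Omega\}$. The $C^2$-regularity of $\partial \Omega$ is used here only to guarantee the crucial compatibility $d \circ \phi_k(x', x_n) \sim x_n$ on $W_k \times [0, r_k)$, so that the weight $d^{1-2s}$ transforms equivalently to $x_n^{1-2s}$ under the pullback.

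Next, I would record the flat-boundary extension statement in $\R^n_+$: for $g \in H^s(\R^{n-1})$ there exists $\tilde{E} g \in H^1(\R^n_+, x_n^{1-2s})$ with $\tilde{E} g|_{x_n = 0} = g$ and $\|\tilde{E} g\|_{H^1(\R^n_+, x_n^{1-2s})} \leq C \|g\|_{H^s(\R^{n-1})}$. A concrete choice is the Caffarelli--Silvestre harmonic extension \cite{CS07}, whose boundedness on these weighted spaces is classical. (Alternatively, a Fourier multiplier of the form $(1 + |\xi|^2 x_n^2)^{-N}$ acting on $g$ works once one checks the weighted $L^2$ bounds.)

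With these tools, I would decompose $f = \sum_k \eta_k f$, pull each $\eta_k f$ back to $\R^{n-1}$ via $\phi_k$ to obtain $\tilde{f}_k$, apply the flat extension to produce $\tilde{E} \tilde{f}_k \in H^1(\R^n_+, x_n^{1-2s})$, multiply by a smooth cutoff $\chi_k(x', x_n)$ supported in $W_k \times [0, r_k)$ and identically one on a smaller neighborhood of $\supp(\eta_k) \times \{0\}$, and push the result forward by $\phi_k^{-1}$, extending by zero outside $U_k \cap \overline{\Omega}$. This defines local contributions $E_k f \in H^1(\Omega, d^{1-2s})$, and the global extension is $E_s f := \sum_{k=1}^M E_k f$. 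Since the weight $d^{1-2s}$ is bounded and bounded away from zero in the interior, no extra work is needed to cover $\Omega$ away from the boundary: the pieces $E_k f$ already live in $\Omega$ after the pushforward.

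Verifying the two required properties is then routine. On $\partial \Omega$, each $E_k f$ restricts to $\eta_k f$ by construction of the flat extension, so $E_s f|_{\partial \Omega} = \sum_k \eta_k f = f$. For the norm bound, each step composes boundedly: the pullback by $\phi_k$ preserves $H^s(\partial \Omega)$ up to constants, the flat extension is bounded by assumption, multiplication by the smooth compactly supported cutoff $\chi_k$ is bounded on $H^1(\R^n_+, x_n^{1-2s})$, and the pushforward by $\phi_k^{-1}$ preserves $H^1(\Omega, d^{1-2s})$ thanks to the weight compatibility $d \circ \phi_k \sim x_n$. Summing yields $\|E_s f\|_{H^1(\Omega, d^{1-2s})} \leq C \|f\|_{H^s(\partial \Omega)}$. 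The main obstacle is genuinely just this geometric weight-compatibility issue under the straightening diffeomorphism; once that is in hand everything reduces to partition-of-unity bookkeeping and the known flat-half-space extension.
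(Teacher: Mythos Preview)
Your proposal is correct and follows essentially the same approach as the paper: both localize via a partition of unity, flatten the boundary with diffeomorphisms, apply the Caffarelli--Silvestre extension in the flat half-space, and glue the pieces back using the weight compatibility $d\circ\phi_k\sim x_n$. Your version is in fact slightly more explicit about the cutoff and the weight-compatibility check than the paper's own sketch.
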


\begin{proof}
Again, this follows by relying on a partition of unity $\{\eta_k\}_{k\in \N}$ and a flatting argument. Flattening $\partial \Omega$ by local diffeomorphisms $\phi_k$ with small $C^1$ norm, we consider $\tilde{f}_k := (f \eta_k)\circ \phi_k$. As $\tilde{f}_k$ may be assumed to be compactly supported in $\{x_{n+1}=0\}$, we obtain an extension $\tilde{u}_k$ satisfying the bound
\begin{align}
\label{eq:est_ext_expl}
\|\tilde{u}_k\|_{H^1(\R^n \times [0,4], x_{n+1}^{1-2s})} \leq C \|\tilde{f}_k\|_{H^s(\R^n)}.
\end{align}
One possibility of achieving this is by choosing $\tilde{u}_k$ to be the solution to
\begin{align*}
\nabla \cdot x_{n+1}^{1-2s} \nabla u &= 0 \mbox{ in } \R^{n+1}_+,\\
u & = \tilde{f}_k \mbox{ on } \R^n \times \{0\}.
\end{align*}
We, for instance, refer to the Appendix in \cite{GFR19} for the derivation of the associated estimates of the form \eqref{eq:est_ext_expl}. Finally, using the local diffeomorphisms $\phi_k$ and the behaviour of the $H^s(\partial \Omega)$ and $H^1(\R^{n+1}_+, x_{n+1}^{1-2s})$ norms under diffeomorphisms, the estimate \eqref{eq:est_ext_expl} turns into a corresponding estimate in $\Omega$. Defining $u:= \sum\limits_{k=1}^{M} \eta_k \tilde{u}_k \circ \phi_k^{-1}$ then concludes the proof. 
\end{proof}

With the trace estimates in hand, we further define the following spaces including boundary data. To this end, let $\Sigma \subset \partial \Omega$ be a $C^2$-regular, relatively open set. Then,

\begin{align*}
H^1_{\Sigma,0}(\Omega, d^{1-2s}) &:= \{u: \Omega \rightarrow \C: \ \|d^{\frac{1-2s}{2}} u\|_{L^2(\Omega)} + \|d^{\frac{1-2s}{2}} \nabla u\|_{L^2(\Omega)}<\infty , \ u|_{\Sigma}=0\}.
\end{align*}

\subsubsection{Test functions for the CGO construction}
\label{sec:CGO_test}
In addition to the weighted Sobolev spaces from above, in our construction of complex geometric optics solutions, we will make use of the following function spaces: For $\Omega \subset \R^{n+1}_+$ with $\overline{\Omega}\cap \{x_{n+1}= 0\}:= \overline{\Sigma}$, we set
\begin{align*}
x_{n+1}^{2s} C_c^{\infty}(\overline{\Omega}):=
\{ f: \Omega \rightarrow \C: f(x) = x_{n+1}^{2s} h(x',x_{n+1}) \mbox{ with } h \in C_c^{\infty}(\overline{\Omega}) \},
\end{align*}
where we use the notation 
\begin{align*}
C_c^{\infty}(\overline{\Omega}):=\{h : \Omega \rightarrow \C: \ h \mbox{ is infinitely often continuously differentiable and } \supp(h) \subset \Omega \cup \Sigma \}. 
\end{align*}
We stress that this in particular enforces that $h, \p_{\nu} h = 0$ on $\partial \Omega \setminus \overline{\Sigma}$ but that $h$ does \emph{not} necessarily vanish on $\overline{\Sigma}$.

For $\Sigma_1 \subset \partial \Omega$ a smooth, $n$-dimensional, star-shaped set, we further consider
\begin{align*}
\tilde{\mathcal{C}}& :=\{f: \Omega \rightarrow \R: \ f \in C^{\infty}(\overline{\Omega}) \mbox{ with } f|_{N(\Sigma_{2},\epsilon)} = 0, \ \p_{\nu} f|_{N(\Sigma_{2},\epsilon)} = 0 , \\
& \qquad \ \p_{n+1} f(x)=0 \mbox{ for } x\in N(\Sigma_1,\epsilon) \times [0,t] \mbox{ for some } \epsilon>0, t>0 \}.
\end{align*}
For simplicity of notation, we have set $\Sigma_2:= \partial \Omega \setminus \overline{\Sigma_1}$ and denote by $N(\Sigma_{2},\epsilon)$, $N(\Sigma_{1},\epsilon)$ an $\epsilon$-neighbourhood of $\Sigma_1, \Sigma_2$ on $\partial \Omega$.

As an important property which we will make use of in our construction of CGO solutions, we state a density result for the space $\mathcal{C}$:

\begin{prop}
\label{prop:dense}
Assume that the conditions from above hold. Then the set $\tilde{\mathcal{C}} \subset H^{1}_{\Sigma_2,0 }(\Omega, x_{n+1}^{1-2s})$ is dense.
\end{prop}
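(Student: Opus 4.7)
My plan is to prove density by a three-step approximation of a general $u\in H^1_{\Sigma_2,0}(\Omega,x_{n+1}^{1-2s})$. First, I would approximate $u$ by a smooth function $v$ that vanishes identically in a relatively open (in $\overline{\Omega}$) neighbourhood of $\Sigma_2$. Next, I would modify $v$ in a thin strip above $\Sigma_1$ so that the $\partial_{n+1}$-derivative of the modification vanishes there. Finally, I would control the error introduced in the second step and check that the result lies in $\tilde{\mathcal{C}}$.

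For the first step, I would use the vanishing trace $u|_{\Sigma_2}=0$ (given by Lemma~\ref{lem:trace}) to extend $u$ by zero across $\Sigma_2$ to a slightly enlarged open set $\Omega^\ast\supset\Omega\cup\Sigma_2$, translate this extension so that the resulting support inside $\overline{\Omega}$ is separated from $\Sigma_2$, and then mollify. Since $x_{n+1}^{1-2s}$ is a Muckenhoupt $A_2$-weight on $\R^{n+1}$, standard weighted truncation and mollification yield convergence in $H^1(\Omega,x_{n+1}^{1-2s})$; one can alternatively use a cutoff combined with a Hardy-type estimate. The geometrically delicate region is the corner $\partial\Sigma_1=\partial\Sigma_2$, which I would handle by a local partition of unity that exploits the vanishing of the weight there.

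Let $v\in C^\infty(\overline{\Omega})$ denote the approximation from the first step, vanishing on $N_{\epsilon_0}(\Sigma_2)\cap\overline{\Omega}$ for some $\epsilon_0>0$. I would pick $\chi\in C_c^\infty(\R^n)$ with $\chi\equiv 1$ on a neighbourhood of $\Sigma_1$ and $\supp(\chi)\subset\{x'\,:\,(x',0)\in\overline{\Sigma_1}\}$, together with $\phi_t\in C_c^\infty([0,\infty))$ satisfying $\phi_t\equiv 1$ on $[0,t]$, $\phi_t\equiv 0$ on $[2t,\infty)$ and $|\phi_t'|\le C/t$, and set
\begin{equation*}
\tilde v(x',x_{n+1}):=v(x',x_{n+1})+\chi(x')\phi_t(x_{n+1})\bigl(v(x',0)-v(x',x_{n+1})\bigr).
\end{equation*}
This is well-defined because $v(x',0)$ is defined wherever $\chi(x')\neq 0$, and it gives $\tilde v\in C^\infty(\overline{\Omega})$ with $\partial_{n+1}\tilde v=0$ on $\{\chi\equiv 1\}\cap\{x_{n+1}\le t\}$. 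For $t<\epsilon_0/4$, the triangle inequality $\dist((x',0),\Sigma_2)\le\dist((x',x_{n+1}),\Sigma_2)+x_{n+1}$ shows that $\tilde v$ still vanishes in a slightly smaller neighbourhood of $\Sigma_2$ in $\overline{\Omega}$, so both $\tilde v$ and $\partial_\nu\tilde v$ vanish in a neighbourhood of $\Sigma_2$ on $\partial\Omega$; hence $\tilde v\in\tilde{\mathcal{C}}$.

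For the convergence, the error $\tilde v-v=\chi(x')\phi_t(x_{n+1})(v(x',0)-v(x',x_{n+1}))$ is supported in $\supp(\chi)\times[0,2t]$ and pointwise bounded by $2t\,\|v\|_{C^1(\overline{\Omega})}$. Combining this with the elementary estimates $\int_0^{2t}x_{n+1}^{1-2s}\,dx_{n+1}=O(t^{2-2s})$ and $\int_0^{2t}x_{n+1}^{3-2s}\,dx_{n+1}=O(t^{4-2s})$ (both positive powers since $s<1$), each term of $\|\tilde v-v\|^2_{H^1(\Omega,x_{n+1}^{1-2s})}$ tends to $0$ as $t\to 0$; in particular the potentially dangerous normal-derivative contribution $\chi\phi_t'(v(x',0)-v(x',x_{n+1}))$ is controlled because the loss $|\phi_t'|\le C/t$ is compensated by $|v(x',0)-v(x',x_{n+1})|\le 2t\,\|v\|_{C^1}$ on $\supp(\phi_t')$. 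The hard part is the first step, where one must respect both the weighted space and the geometry at the corner $\partial\Sigma_1$; by contrast, the flattening in steps~2 and~3 is an essentially one-dimensional calculation enabled by the fact that $\Sigma_1\subset\{x_{n+1}=0\}$, which ensures that the anchor value $v(x',0)$ is well-defined wherever $\chi(x')\neq 0$.
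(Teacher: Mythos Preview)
Your two-phase strategy---first approximate by a smooth function vanishing in a neighbourhood of $\Sigma_2$ in $\overline{\Omega}$, then flatten the normal derivative near $\Sigma_1$ by interpolating with the trace value---is exactly the paper's approach, and your error estimates in the flattening step match the paper's Step~3.

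Two points deserve attention. First, your stated conditions on $\chi$ are contradictory: a function in $C_c^\infty(\R^n)$ cannot be identically $1$ on an open $\R^n$-neighbourhood of $\Sigma_1$ while having $\supp(\chi)\subset\overline{\Sigma_1}$. What you actually need is $\supp(\chi)\Subset\Sigma_1$ with $\chi\equiv 1$ on the compact set where $v(\cdot,0)\neq 0$; this set is compactly contained in $\Sigma_1$ precisely because step~1 gives $v\equiv 0$ in a neighbourhood of $\overline{\Sigma_2}\supset\partial\Sigma_1$. With that choice, in the transition region $\{0<\chi<1\}\times[0,2t]$ one has $v\equiv 0$, hence $\tilde v\equiv 0$ there, and the required vanishing of $\partial_{n+1}\tilde v$ on a full set $N(\Sigma_1,\epsilon)\times[0,t]$ follows by combining this with the region $\{\chi\equiv 1\}$. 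The paper's Step~3 handles this by working in boundary normal coordinates around all of $\partial\Omega$ rather than using a tangential cutoff.

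Second, your step~1 is only a sketch, as you acknowledge. The paper supplies a concrete mechanism you do not mention: it uses the assumed \emph{star-shapedness} of $\Sigma_1$ to dilate $u(x)\mapsto u\bigl(\tfrac{d}{d-\delta}x\bigr)$, which pulls the boundary support strictly inside $\Sigma_1$; it then splits into the cases ``trace compactly supported in $\Sigma_1$'' and ``zero trace'' and mollifies (Lemma~\ref{lem:aux} and the argument for Muckenhoupt weights). Your ``extend by zero, translate, mollify'' works locally after a partition of unity and boundary-flattening, but there is no single global translation for a curved $\Sigma_2$, and the corner $\partial\Sigma_1$ needs exactly the kind of care the paper's rescaling provides. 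If you do not want to invoke star-shapedness, you must replace it by a genuine local argument.
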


We postpone the proof of Proposition \ref{prop:dense} to the appendix.

Finally, for $s\in (0,1)$ we define
\begin{align}
\label{eq:C}
\mathcal{C} := \tilde{\mathcal{C}} + x_{n+1}^{2s}C_c^{\infty}(\overline{\Omega}).
\end{align}
We will use this space extensively in Section \ref{sec:CGO}.

\subsubsection{Semiclassical spaces and the Fourier transform}
In our construction of CGOs it will be useful to work with semiclassical Sobolev spaces. To this end, we use the following notation for the Fourier transform  
$$ \hat u(y) = \mathcal{F} u(y) = \int_{\mathbb R^{n+1}} e^{-ix\cdot y} u(x) dx\;. $$

We introduce the following definitions for the semiclassical Sobolev spaces. Let $\xi' \in \mathbb C^n$. Eventually we will consider the limit case $|\xi'|\rightarrow \infty$, and thus for us $|\xi'|^{-1}$ constitutes a small parameter. Following \cite{Zw12}, we define the semiclassical Fourier transform as
$$ \mathcal{F}_{sc} u(y) := \int_{\mathbb R^{n+1}} e^{-i |\xi'| x \cdot y} u(x) dx\;,$$
and then use it in order to define the semiclassical Sobolev norm
$$ \|u\|^2_{H^s_{sc}(\mathbb R^{n+1})} := \left(\frac{|\xi'|}{2\pi}\right)^{n+1} \| \langle y \rangle^s |  \mathcal{F}_{sc} u(y) | \|^2_{L^2(\mathbb R^{n+1})}\;,$$
\noindent where $s\in\mathbb R$, $u\in L^2(\mathbb R^{n+1})$ and $\langle y\rangle := (1+|y|^2)^{1/2}$ for $y\in \mathbb R^{n+1}$. The cases of interest for us are $s=0$ and $s=1$, for which we have
$$ \|u\|_{L^2_{sc}(\mathbb R^{n+1})} = \|u\|_{L^2(\mathbb R^{n+1})} \; \quad\mbox{and}\quad \|u\|_{H^1_{sc}(\mathbb R^{n+1})} = \|u\|_{L^2(\mathbb R^{n+1})} + |\xi'|^{-1} \|\nabla u\|_{L^2(\mathbb R^{n+1})}\;.$$
The semiclassical Sobolev spaces $L^2_{sc}(\mathbb R^{n+1})$ and $H^1_{sc}(\mathbb R^{n+1})$ are then defined as the subspaces of $L^2(\mathbb R^{n+1})$ where the corresponding semiclassical norms are finite. Moreover, if $\Omega$ is some open subset of $\mathbb R^{n+1}$ and $w(x)$ is a weight function, we define the weighted semiclassical Sobolev space $H^s_{sc}(\Omega,w)$ as the subspace of $L^2(\mathbb R^{n+1})$ where the norm
$$ \|u\|^2_{H^s_{sc}(\Omega,w)} := \left(\frac{|\xi'|}{2\pi}\right)^{n+1} \| \langle y \rangle^s |  \mathcal{F}_{sc} u(y) | \|^2_{L^2(\Omega,w)}\; $$
is finite. In the special cases $s=0$ and $s=1$ this of course gives
$$ \|u\|_{L^2_{sc}(\Omega,w)} = \|u\|_{L^2(\Omega,w)} \; \quad\mbox{and}\quad \|u\|_{H^1_{sc}(\Omega,w)} = \|u\|_{L^2(\Omega,w)} + |\xi'|^{-1} \|\nabla u\|_{L^2(\Omega,w)}\;.$$

\subsection{Trace estimates}

In this section we collect a number of (weighted) trace estimates. These are not new and have already been used in for instance \cite{Rue15, Rue17, RW19}.

We begin with the case $s= \frac{1}{2}$:

\begin{lem}
\label{lem:boundary_trace}
Let $\Omega \subset \R^n$ be an open, bounded, $C^2$-regular domain. Then there exist constants $C=C(\Omega, \partial \Omega)>1$, $c_0=c_0(\Omega)>1$ such that for all $u \in H^1(\Omega)$ and $\mu \geq c_0$ it holds
\begin{align*}
\|u\|_{L^2(\partial \Omega)} \leq C(\mu^{-1}\|\nabla u\|_{L^2(\Omega)} + \mu \|u\|_{L^2(\Omega)}).
\end{align*}
\end{lem}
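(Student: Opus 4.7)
The plan is to prove this classical parameter-dependent trace estimate via an integration-by-parts argument, with the parameter $\mu$ introduced through a Young-type inequality.

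First I would extend the outward unit normal $\nu|_{\partial \Omega}$ to a smooth vector field $\widetilde{\nu}\in C^{1}(\overline{\Omega}, \R^{n})$ on all of $\Omega$; this is possible because $\Omega$ is $C^{2}$-regular, and by rescaling one may assume $\|\widetilde{\nu}\|_{C^{1}(\overline{\Omega})} \leq C(\Omega)$. By a density argument it suffices to prove the estimate for $u\in C^{\infty}(\overline{\Omega})$, and one then extends to $u\in H^{1}(\Omega)$ using the continuity of the trace operator $H^{1}(\Omega)\to L^{2}(\partial \Omega)$.

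Next, since $\widetilde{\nu}\cdot \nu = 1$ on $\partial \Omega$, the divergence theorem applied to the vector field $|u|^{2}\widetilde{\nu}$ yields
\begin{align*}
\int_{\partial \Omega} |u|^{2}\, d\mathcal{H}^{n-1}
= \int_{\Omega} \nabla \cdot\bigl(|u|^{2}\widetilde{\nu}\bigr)\, dx
= \int_{\Omega} |u|^{2}(\nabla \cdot \widetilde{\nu})\, dx + 2\,\Ree \int_{\Omega} \widetilde{\nu}\cdot (\overline{u}\,\nabla u)\, dx.
\end{align*}
Using $\|\widetilde{\nu}\|_{C^{1}}\leq C$ and Cauchy--Schwarz, this gives
\begin{align*}
\|u\|_{L^{2}(\partial \Omega)}^{2} \leq C\bigl(\|u\|_{L^{2}(\Omega)}^{2} + \|u\|_{L^{2}(\Omega)}\|\nabla u\|_{L^{2}(\Omega)}\bigr).
\end{align*}

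Then I would apply Young's inequality with parameter $\mu \geq 1$ to the cross term,
\begin{align*}
\|u\|_{L^{2}(\Omega)}\|\nabla u\|_{L^{2}(\Omega)} \leq \tfrac{1}{2}\mu^{-2}\|\nabla u\|_{L^{2}(\Omega)}^{2} + \tfrac{1}{2}\mu^{2}\|u\|_{L^{2}(\Omega)}^{2},
\end{align*}
and absorb the resulting $\|u\|_{L^{2}(\Omega)}^{2}$ term into $\mu^{2}\|u\|_{L^{2}(\Omega)}^{2}$ provided $\mu \geq c_{0}$ for some $c_{0}(\Omega)\geq 1$ large enough. Taking square roots and using $\sqrt{a^{2}+b^{2}}\leq a+b$ for $a,b\geq 0$ then yields the claimed inequality with a constant $C$ depending only on $\Omega$ and $\partial \Omega$. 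There is no real obstacle here beyond choosing the extension $\widetilde{\nu}$ and balancing constants; the main care is simply to keep the constant $C$ independent of $\mu$, which is automatic once the $\|u\|_{L^{2}(\Omega)}^{2}$ term from the divergence is absorbed into the $\mu^{2}\|u\|_{L^{2}(\Omega)}^{2}$ term.
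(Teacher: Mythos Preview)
Your argument is correct. It is, however, a genuinely different route from the paper's. The paper works in boundary normal coordinates $(x',t)$ near $\partial\Omega$, writes $u(x',0)=\int_0^t \partial_s u(x',s)\,ds + u(x',t)$ via the fundamental theorem, squares, integrates over $x'\in\partial\Omega$, and then integrates the resulting inequality in $t$ over a boundary layer of thickness $\mu^{-1}$; the parameter $\mu$ thus enters geometrically as the inverse width of a collar neighbourhood. Your approach instead obtains the trace identity in one stroke via the divergence theorem applied to $|u|^{2}\widetilde{\nu}$ and introduces $\mu$ purely algebraically through Young's inequality.

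Your method is slightly more elementary in that it avoids boundary normal coordinates altogether. The paper's method, on the other hand, is tailored to generalise: the very next lemma in the paper is the weighted analogue
\[
\|u\|_{L^2(\partial\Omega)} \leq C\bigl(\mu^{-s}\|d^{\frac{1-2s}{2}}\nabla u\|_{L^2(\Omega)} + \mu^{1-s}\|d^{\frac{1-2s}{2}} u\|_{L^2(\Omega)}\bigr),
\]
and there the layer-integration argument adapts directly (one picks up the weight $t^{1-2s}$ from H\"older and integrates $t$ over $[r,2r]$ with $r=\mu^{-1}$), whereas your divergence-theorem identity gives unweighted bulk terms that do not immediately convert to $d^{1-2s}$-weighted ones. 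So both proofs are fine for the case $s=\tfrac12$, but the paper's is the one that scales to the degenerate setting used later.
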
 

\begin{proof}
By density, we may without loss of generality assume that $u$ is smooth. We work in boundary normal coordinates and denote the coordinates by $x=(x',t)$, where $x = x' + \tau \nu(x')$, $x' \in \partial \Omega$ and $\nu(x')$ denotes the inner unit normal to $\partial \Omega$ at $x'$. By the fundamental theorem, we thus write for some $t>0$
\begin{align*}
u(x', 0) = \int\limits_{0}^t \p_s u(x', s) ds + u(x',t).
\end{align*}
As a consequence, by Hölder,
\begin{align*}
|u(x',0)|^2 \leq C (t \int\limits_{0}^t |\p_s u(x', s)|^2 ds  + |u(x',t)|^2).
\end{align*}
Integrating over $x'\in \partial \Omega$ thus yields
\begin{align*}
\|u\|_{L^2(\partial \Omega)}^2
&\leq C t \int\limits_{\partial \Omega}\int\limits_{0}^t |\p_s u(x', s)|^2 ds dx'  + \|u(\cdot,t)\|_{L^2(\partial \Omega)}^2\\
&\leq C t \|\nabla u\|_{L^2(\Omega)}^2  + \|u\|_{L^2(\partial \Omega_t)}^2.
\end{align*}
Integrating in $t\in (0,\mu^{-1})$ with $\mu \geq C_0(\Omega)>0$ leads to
\begin{align*}
\mu^{-1}\|u\|_{L^2(\partial \Omega)}^2
\leq C \mu^{-2} \|\nabla u\|_{L^2(\Omega)}^2  +  \|u\|_{L^2( \Omega)}^2.
\end{align*}
Multiplying by $\mu>0$ implies the desired result.
\end{proof}

More generally, also a weighted trace estimate holds for $s\in (0,1)$:

\begin{lem}
\label{lem:boundary_trace_s}
Let $\Omega \subset \R^n$ be an open, bounded, $C^2$-regular domain. Let $d: \Omega \rightarrow [0,\infty)$ be a $C^1$ regular function which close to the boundary $\partial \Omega$ coincides with the distance function to $\partial \Omega$. Then there exist constants $C=C(\Omega, \partial \Omega)>1$, $c_0=c_0(\Omega)>1$ such that for all $u \in H^1(\Omega)$ and $\mu \geq c_0$ it holds
\begin{align*}
\|u\|_{L^2(\partial \Omega)} \leq C(\mu^{-s}\|d^{\frac{1-2s}{2}} \nabla u\|_{L^2(\Omega)} + \mu^{1-s} \|d^{\frac{1-2s}{2}} u\|_{L^2(\Omega)}).
\end{align*}
\end{lem}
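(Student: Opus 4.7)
The strategy is a weighted version of the argument used for Lemma~\ref{lem:boundary_trace}: work in boundary normal coordinates, express $u$ on $\partial\Omega$ via the fundamental theorem of calculus along normal rays, and then integrate in a thin tubular neighbourhood with an appropriate weight chosen to produce the factors $d^{1-2s}$ on the right-hand side.

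By density I may assume $u \in C^\infty(\overline{\Omega})$. Fix $\tau_0>0$ so small that the map $(x',\tau)\mapsto x'+\tau\nu(x')$ is a $C^1$ diffeomorphism from $\partial\Omega \times [0,\tau_0]$ onto a tubular neighbourhood $N$ of $\partial\Omega$, on which $d(x',\tau)$ is comparable to $\tau$. Set $c_0 = 2/\tau_0$ so that $1/\mu \le \tau_0/2$ for $\mu\ge c_0$. By the fundamental theorem of calculus and Cauchy--Schwarz with the weight $\tau^{(1-2s)/2}$,
\begin{align*}
|u(x',0)|^2 &\le 2|u(x',t)|^2 + 2\Bigl|\int_0^t \tau^{(2s-1)/2}\cdot\tau^{(1-2s)/2}\partial_\tau u(x',\tau)\,d\tau\Bigr|^2\\
&\le 2|u(x',t)|^2 + \frac{t^{2s}}{s}\int_0^t \tau^{1-2s}|\partial_\tau u(x',\tau)|^2\,d\tau,
\end{align*}
valid for every $t\in (0,\tau_0)$ and $x'\in\partial\Omega$, since $\int_0^t\tau^{2s-1}d\tau = t^{2s}/(2s)$ because $s\in (0,1)$.

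Next, I integrate this inequality against the weight $w(t)=t^{1-2s}$ over $t\in(0,1/\mu)$ and then over $x'\in\partial\Omega$. On the left-hand side this produces the factor $\int_0^{1/\mu} t^{1-2s}dt = \mu^{2s-2}/(2(1-s))$ in front of $\|u\|_{L^2(\partial\Omega)}^2$. On the right-hand side, the first term becomes $2\|d^{(1-2s)/2}u\|_{L^2(N\cap\{\tau<1/\mu\})}^2$, using that $d\sim\tau$ and the Jacobian of the normal coordinates is bounded above and below. For the second term, $\int_0^{1/\mu}t^{2s}\cdot t^{1-2s}dt = 1/(2\mu^2)$, giving a bound $C\mu^{-2}\|d^{(1-2s)/2}\nabla u\|_{L^2(N\cap\{\tau<1/\mu\})}^2$. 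Multiplying through by $2(1-s)\mu^{2-2s}$ yields
\begin{equation*}
\|u\|_{L^2(\partial\Omega)}^2 \le C\bigl(\mu^{2-2s}\|d^{(1-2s)/2}u\|_{L^2(\Omega)}^2 + \mu^{-2s}\|d^{(1-2s)/2}\nabla u\|_{L^2(\Omega)}^2\bigr),
\end{equation*}
and taking square roots gives the claimed estimate. The main technical point is the correct choice of weight $w(t)=t^{1-2s}$ in the outer integration; this is dictated by the requirement that $\int_0^{1/\mu}w(t)|u|^2\,dt$ reproduces the weighted bulk norm $\|d^{(1-2s)/2}u\|_{L^2}^2$ while simultaneously balancing the powers of $\mu$ on both sides.

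The passage from the tubular neighbourhood $N$ to the full domain $\Omega$ is routine: one may glue via a partition of unity $\{\eta_k\}$ subordinate to local boundary charts flattening $\partial\Omega$; on each chart the above calculation applies verbatim, and the $C^1$ regularity of $d$ plus the comparability $d\sim\tau$ near $\partial\Omega$ absorbs all Jacobian factors into the constant $C$. The only mild obstacle is verifying that the integration range $t\in(0,1/\mu)$ stays within the tubular neighbourhood, which is exactly the role of the threshold $\mu\ge c_0$.
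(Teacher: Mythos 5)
Your proof is correct and takes essentially the same approach as the paper: boundary normal coordinates, the fundamental theorem of calculus combined with a weighted Cauchy--Schwarz inequality in the normal direction, and then an integration in $t$ at scale $\sim 1/\mu$. The only cosmetic difference is the final averaging step: the paper integrates the pointwise estimate with the plain measure over the dyadic shell $t\in[r,2r]$ and then uses $d\sim r$ there to recover the weight, whereas you integrate against $t^{1-2s}\,dt$ over $(0,1/\mu)$, which produces the weighted bulk norm $\|d^{(1-2s)/2}u\|_{L^2}$ directly.
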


\begin{proof}
As above, we only prove the result for $u$ smooth and work in boundary normal coordinates $x=(x',t)$ as in the proof of Lemma \ref{lem:boundary_trace}. Again the fundamental theorem in combination with Hölder's inequality, yields
\begin{align*}
|u(x',0)| &\leq \int\limits_{0}^t |\p_r u(x', r)| dr + |u(x',r)|\\
&\leq C_s t^{s}\left( \int_{0}^{r} r^{1-2s} |\p_r u(x', r)|^2 dr \right)^{\frac{1}{2}} + |u(x',r)|.
\end{align*}
Squaring this and integrating in the tangential coordinates yields for $t = t(\Omega)>0$ sufficiently small that
\begin{align*}
\|u\|_{L^2(\partial \Omega)}^2
\leq C t^{2s} \|t^{\frac{1-2s}{2}} \p_t u \|_{L^2(\Omega)}^2 + C \|u\|_{L^2(\partial \Omega_t)}^2.
\end{align*}
Integrating this in $t \in [r,2r]$ for $r \in (0,r_0)$ and $r_0=r_0(\Omega)>0$ entails that
\begin{align*}
r \|u\|_{L^2(\partial \Omega)}^2
&\leq C r^{2s+1}\|t^{\frac{1-2s}{2}} \p_t u\|_{L^2(\Omega)}^2 + C \|u\|_{L^2(\Omega)}^2\\
&\leq C r^{2s+1}\|t^{\frac{1-2s}{2}} \p_t u\|_{L^2(\Omega)}^2 + C r^{2s-1}\|t^{\frac{1-2s}{2}} u\|_{L^2(\Omega)}^2\\
&\leq C r^{2s+1}\|d^{\frac{1-2s}{2}} \nabla u\|_{L^2(\Omega)}^2 + C r^{2s-1}\|d^{\frac{1-2s}{2}} u\|_{L^2(\Omega)}^2.
\end{align*}
Dividing by $r>0$ and defining $\mu^{-1}=r$, we obtain the desired result for $\mu\geq r_0^{-1}$.
\end{proof}

\subsection{Notation for sets}
\label{sec:not_sets}

In the following we will work with Calder\'on type problems with mixed boundary conditions. To this end, we will use the following notation in the remainder of the article. In Sections \ref{sec:well_posed1}-\ref{sec:Runges} we will always assume that $\Omega \subset \R^{n+1}_+$ is a relatively open, $C^2$-regular set. Furthermore, the sets $\Sigma_1, \Sigma_2 \subset \partial \Omega$ are $C^2$-regular and satisfy $\Sigma_1\cap \Sigma_2 = \emptyset$. For the sake of simplicity, in the sequel we will always assume that $\Omega_1 \Subset \Omega$ is a bounded, open set such that $\Omega \setminus \overline{\Omega_1}$ is simply connected. In Sections \ref{sec:Carl} and \ref{sec:CGO} we will in addition assume that all sets are smooth and that $\Sigma_1$ is star-shaped.

Working with sets in the neighbourhood of $\partial \Omega$ or with some distance to $\partial \Omega$, we further define for $\delta \in (0,1)$ sufficiently small
\begin{align}
\label{eq:boundary_not}
\begin{split}
\Omega_{\delta} & := \{x \in \Omega: \ \dist(x, \partial \Omega)\geq \delta\},\\
\partial \Omega_{\delta} &:=  \{x + t\nu(x): \ x \in \partial \Omega,\ t \in (0,\delta)\} \subset \Omega.
\end{split}
\end{align}
Here for $x\in \partial \Omega \subset \R^n$ the vector $\nu(x) \in S^{n-1}$ denotes the inner unit normal at the point $x$.
For a subset $\Sigma \subset \partial \Omega$ we further set
\begin{align*}
N( \Sigma, \delta)&:=\{x \in \p\Omega: \ \dist(x,\Sigma)\leq \delta\}.
\end{align*}

\section{Well-Posedness of the Mixed Boundary Value Problems \eqref{eq:Schroedinger} and \eqref{eq:frac_Schr}}
\label{sec:well_posed1}

In this section, we discuss the well-posedness of the (weak) forms of the equations \eqref{eq:Schroedinger} and \eqref{eq:frac_Schr} in the associated energy spaces. Based on this, we define the associated Dirichlet-to-Neumann maps and derive the central Alessandrini identities which we will use in the following sections when dealing with the associated inverse problems.

We begin by discussing the well-posedness of the problem \eqref{eq:Schroedinger}.

\begin{prop}[Well-posedness, $s=\frac{1}{2}$]
\label{prop:well-posed_12}
Let $B_{A,V,q}$ denote the bilinear form from \eqref{eq:Bform12} and let $\Omega, \Sigma_1, \Sigma_2$ be as above. 
Then, there exists a countable set $M \subset \C$ such that if $\lambda \in \C \setminus M$, for all $F \in (H^1_{\partial \Omega \setminus \Sigma_1,0}(\Omega))^{\ast}$, $f_2 \in H^{\frac{1}{2}}_{\overline{\Sigma_2}}$ and $f_1 \in H^{-\frac{1}{2}}(\Sigma_1)$, there is $u \in H^{1}_{\partial \Omega \setminus (\Sigma_1 \cup \Sigma_2),0}(\Omega)$ with $u|_{\Sigma_2}= {f_2}$ and with
\begin{align}
\label{eq:weak_form}
B_{A,V,q}(u,v) - \lambda(u,v)_{L^2(\Omega)} = \langle F, v \rangle + (f_1,v)_{L^2(\Sigma_1)} ,
\end{align}
for all $ v\in H^{1}_{\partial \Omega \setminus \Sigma_1,0}(\Omega)$. Here $\langle \cdot, \cdot \rangle$ denotes the $ (H^{1}_{\partial \Omega \setminus \Sigma_1,0}(\Omega))^{\ast}$, $ H^{1}_{\partial \Omega \setminus \Sigma_1,0}(\Omega)$ duality pairing.

If $\lambda \notin M$, there exists a constant $C>0$ such that
\begin{align}
\label{eq:apriori1/2}
\|u\|_{H^1(\Omega)} \leq C (\|F\|_{(H^1_{\partial \Omega \setminus \Sigma_1,0}(\Omega))^{\ast}} +  \|f_1\|_{H^{-\frac{1}{2}}(\Sigma_1)} + \|f_2\|_{H^{\frac{1}{2}}(\Sigma_2)}).
\end{align}
\end{prop}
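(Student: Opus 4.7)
The plan is to combine a standard Dirichlet lift with coercivity of a shifted sesquilinear form and the analytic Fredholm alternative, adapted to the mixed-boundary setting.

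First I would reduce to homogeneous Dirichlet data on $\Sigma_2$. Using Lemma \ref{lem:extension} with $s=\tfrac{1}{2}$ (combined with extension by zero of $f_2 \in H^{\frac{1}{2}}_{\overline{\Sigma_2}}$ to all of $\partial \Omega$, which is valid since $f_2$ has support in $\overline{\Sigma_2}$), construct $Ef_2 \in H^1(\Omega)$ with $(Ef_2)|_{\Sigma_2}=f_2$ and trivial trace on $\partial\Omega \setminus (\Sigma_1 \cup \Sigma_2)$, satisfying $\|Ef_2\|_{H^1(\Omega)} \leq C\|f_2\|_{H^{1/2}(\Sigma_2)}$. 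Writing $u = Ef_2 + w$ with $w \in H^{1}_{\partial \Omega \setminus \Sigma_1,0}(\Omega)$ reduces \eqref{eq:weak_form} to finding $w$ satisfying $B_{A,V,q}(w,v) - \lambda(w,v)_{L^2(\Omega)} = \langle \widetilde F, v\rangle + (f_1,v)_{L^2(\Sigma_1)}$ for all admissible test functions $v$, where $\widetilde F \in (H^{1}_{\partial\Omega \setminus \Sigma_1,0}(\Omega))^{\ast}$ satisfies $\|\widetilde F\|_{*} \leq C(\|F\|_{*} + (1+|\lambda|)\|f_2\|_{H^{1/2}(\Sigma_2)})$.

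Next I would establish coercivity of the shifted form $a_\mu(w,v) := B_{A,V,q}(w,v) + \mu(w,v)_{L^2(\Omega)}$ on the closed subspace $H^{1}_{\partial\Omega\setminus\Sigma_1,0}(\Omega)$ for $\mu$ sufficiently large. Taking the real part, the two magnetic cross terms combine (using that $A$ is real) into $-2A\cdot \Im(\overline{w}\nabla w)$, which Young's inequality controls by $\tfrac{1}{2}\|\nabla w\|_{L^2(\Omega)}^2 + 2\|A\|_{L^\infty}^2\|w\|_{L^2(\Omega)}^2$. The potential term $\int V|w|^2$ is absorbed into $-\|V\|_{L^\infty}\|w\|_{L^2(\Omega)}^2$. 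The only non-trivial contribution is the Robin term $\int_{\Sigma_1} q|w|^2$, which is handled by Lemma \ref{lem:boundary_trace}: selecting the free parameter therein large enough in terms of $\|q\|_{L^\infty}$, this term is absorbed into $\tfrac{1}{4}\|\nabla w\|_{L^2(\Omega)}^2 + C\|w\|_{L^2(\Omega)}^2$. Fixing $\mu_0$ larger than the resulting constants yields $\Re\, a_{\mu_0}(w,w) \geq c \|w\|_{H^1(\Omega)}^2$; Poincaré applies on $H^{1}_{\partial\Omega\setminus\Sigma_1,0}(\Omega)$ since $\partial\Omega\setminus\Sigma_1 \supset \Sigma_2 \neq \emptyset$. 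Absorbing the magnetic boundary term via the weighted trace estimate is the one step that is not routine.

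Lax-Milgram then provides a bounded inverse $T_{\mu_0}^{-1}$ of the operator $T_{\mu_0}: H^{1}_{\partial\Omega\setminus\Sigma_1,0}(\Omega) \to (H^{1}_{\partial\Omega\setminus\Sigma_1,0}(\Omega))^{\ast}$ associated with $a_{\mu_0}$. For general $\lambda$, decompose $T_\lambda = T_{\mu_0} - (\lambda + \mu_0) K$, where $K$ is the operator induced by the $L^2(\Omega)$ inner product; $K$ is compact by the Rellich-Kondrachov embedding $H^1(\Omega) \hookrightarrow L^2(\Omega)$. Hence $T_\lambda T_{\mu_0}^{-1} = \Id - (\lambda+\mu_0) K T_{\mu_0}^{-1}$ depends holomorphically on $\lambda$ and is a compact perturbation of the identity. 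The analytic Fredholm theorem yields a countable (indeed discrete) set $M \subset \C$ outside of which $T_\lambda$ is boundedly invertible; combining this with the norm control on $\widetilde F$ from the lift step produces the a priori estimate \eqref{eq:apriori1/2}. Undoing the substitution $u = w + Ef_2$ completes the argument.
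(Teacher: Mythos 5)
Your argument is correct and follows the same overall structure as the paper's proof (Dirichlet lift, continuity, coercivity via the trace inequality of Lemma \ref{lem:boundary_trace} applied to the Robin term, Young's inequality for the magnetic cross terms, Poincar\'e on $H^1_{\partial\Omega\setminus\Sigma_1,0}(\Omega)$, then compactness). The one place where you genuinely deviate is the final step that produces the countable set $M$: the paper identifies the bilinear form $B_{\mu}$ as a scalar product, applies the Riesz representation theorem to build the solution operator $G_\mu$, and then invokes the spectral theorem for compact \emph{self-adjoint} operators on $L^2$ (exploiting that $A,V,q$ are real-valued) to exhibit $M = \{1/(\lambda_j+\mu)\}$. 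You instead use Lax--Milgram for the shifted form, write $T_\lambda T_{\mu_0}^{-1} = \Id - (\lambda+\mu_0)KT_{\mu_0}^{-1}$ with $K$ compact, and appeal to the analytic Fredholm theorem. This buys you something: the Fredholm route does not use symmetry/self-adjointness of the form and would survive if $A,V,q$ were complex-valued, whereas the paper's spectral-theorem argument leans on the Hermitian structure; it also directly yields discreteness (hence countability) of $M$ together with holomorphic dependence of the resolvent. The paper's version, in exchange, is slightly more concrete about what $M$ is. One small slip to flag: in your coercivity discussion you first correctly say the only non-trivial boundary contribution is the Robin term $\int_{\Sigma_1}q|w|^2$, then call it the ``magnetic boundary term.'' There is no magnetic boundary term in $B_{A,V,q}$ for $s=1/2$ (the condition $\nu\cdot A=0$ and the weak form ensure this); the term being absorbed via Lemma \ref{lem:boundary_trace} is the Robin term, and for $s=1/2$ the trace estimate used is the unweighted one of Lemma \ref{lem:boundary_trace}, not the weighted one of Lemma \ref{lem:boundary_trace_s}. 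You should also spell out, as the paper does, that $v\mapsto (f_1,v)_{L^2(\Sigma_1)}$ extends to a bounded functional on $H^1_{\partial\Omega\setminus\Sigma_1,0}(\Omega)$ via the $H^{1/2}(\Sigma_1)$ trace and the $H^{-1/2}$--$H^{1/2}$ pairing; this is implicit in your write-up but needs saying.
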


\begin{rmk}
\label{rmk:sol1/2}
We remark that in Proposition \ref{prop:well-posed_12}, compared to the problem in \eqref{eq:Schroedinger}, we consider the slightly more general setting of constructing (weak) solutions to
\begin{align}
\label{eq:Schroedinger_gen}
\begin{split}
L_{\lambda}u:=-\D u - i A \cdot \nabla u - i \nabla \cdot (A u) + (|A|^2 +V + \lambda) u  & = F  \mbox{ in } \Omega,\\
\p_{\nu} u + qu &= f_1 \mbox{ on } \Sigma_1,\\
u & = f_2 \mbox{ on } \Sigma_2,\\
u & = 0 \mbox{ on } \partial \Omega \setminus (\Sigma_1 \cup \Sigma_2).
\end{split}
\end{align}
This will be convenient when discussing density properties by studying the adjoint equation (see Section \ref{sec:Q11}).

For $\lambda \notin M$, we will refer to solutions of \eqref{eq:weak_form} with the described properties as \emph{weak solutions} to \eqref{eq:Schroedinger_gen}. It is this notion of a solution that we will work with in the sequel.
\end{rmk}

\begin{proof}
We argue in several steps.
\medskip

\emph{Step 1: Reduction.}
We first reduce the problem to the case of $f_2 =0$ by considering $u = u_1 + E(f_2)$ where $E(f_2)$ is an $H^1_{\partial \Omega \setminus \Sigma_2, 0}(\Omega)$ extension of $f_2$ satisfying the bound $\|E(f_2)\|_{H^1(\Omega)} \leq C \|f_2\|_{H^{\frac{1}{2}}(\Sigma_2)}$. This is possible by for instance defining $E(f_2)$ to be the harmonic extension of $f_2$ into $\Omega$. The function $u_1$ thus solves a  similar problem as the original function $u$ with a new functional $\tilde{F}:= F - L_{\lambda}(E(f_2)) \in (H^1_{\partial \Omega \setminus \Sigma_1,0}(\Omega))^{\ast}$, but now in addition satisfies $\tilde{f}_2:=u_1|_{\Sigma_2}=0$. Here the expression $L_{\lambda}(E(f_2))$ is understood in the weak sense, i.e. as the functional $H^{1}_{\partial \Omega \setminus \Sigma_1,0}(\Omega) \ni v\mapsto -B_{A,V,q}(E(f_2),v)$.
With slight abuse of notation, in the following we will only work with the function $u_1$ and drop the subindex in the notation for $u_1$ and the tildas in the data.
\medskip

\emph{Step 2: Continuity.}

We observe that for $v \in H^1_{\partial \Omega \setminus \Sigma_1,0}(\Omega)$ as above, we have (using the trace inequality)
\begin{align*}
|B_{A,V,q}(u,v)| \leq C \|u\|_{H^1(\Omega)} \|v\|_{H^1(\Omega)}.
\end{align*}
Here the constant $C>0$ depends on $\lambda, \|q\|_{L^{\infty}}, \|A\|_{L^{\infty}}, \|V\|_{L^{\infty}}$.
This proves the continuity of the bilinear form.

\medskip

\emph{Step 3: Coercivity.}
We next study the coercivity properties of the bilinear form. By Cauchy-Schwarz
\begin{align*}
\left| \int\limits_{\partial \Omega} q u \overline{v} d\mathcal{H}^{n-1} \right| \leq  \|q\|_{L^{\infty}(\partial \Omega)} \|u\|_{L^{2}(\partial \Omega)} \|v\|_{L^{2}(\partial \Omega)}.
\end{align*}
Thus, by the trace inequality from Lemma \ref{lem:boundary_trace} we infer that
\begin{align*}
\left| \int\limits_{\partial \Omega} q |u|^2 d\mathcal{H}^{n-1} \right|
\leq  C\|q\|_{L^{\infty}(\partial \Omega)} \|u\|_{L^2(\partial \Omega)}^2
\leq C\|q\|_{L^{\infty}(\partial \Omega)}(\mu^{-2}\|\nabla u\|_{L^2(\Omega)}^2 + \mu^2 \|u\|_{L^2(\Omega)}^2).
\end{align*}
Choosing $\mu>1$ such that $C\|q\|_{L^{\infty}(\partial \Omega)}\mu^{-2} \leq \frac{1}{10}$, we thus obtain
\begin{align*}
\left| \int\limits_{\partial \Omega} q |u|^2 d\mathcal{H}^{n-1} \right|
\leq \frac{1}{10}\|\nabla u\|_{L^2(\Omega)}^2 + C\|q\|_{L^2(\partial \Omega)} \|u\|_{L^2(\Omega)}^2.
\end{align*}
Moreover, by Young's inequality,
\begin{align*}
\left| \int\limits_{\Omega} u A \cdot \overline{\nabla u} dx\right|
\leq  \frac{1}{10} \|\nabla u\|_{L^2(\Omega)}^2 + C \|A\|_{L^{\infty}(\Omega)}^2 \|u\|_{L^2(\Omega)}^2.
\end{align*} 
Noting that by Poincar\'e's inequality there exists a constant $C>0$ such that for all $u\in H^{1}_{\partial \Omega \setminus \Sigma_1,0}(\Omega)$ we have
\begin{align*}
C\|\nabla u\|_{L^2(\Omega)} \geq \|u\|_{H^1(\Omega)},
\end{align*}
and combining this with the previous estimates for the lower order bulk and boundary contributions, we thus obtain that for $\mu = \|V_-\|_{L^{\infty}(\Omega)} + C_1 \|A\|_{L^{\infty}(\Omega)}  + C_1 \|q\|_{L^{\infty}(\partial \Omega)}$ with suitable constants $C_1,C_2>0$, we have
\begin{align*}
B_{\mu}(u,u):= B_{A,V,q}(u,u) + \mu (u,u)_{L^2(\Omega)} \geq C_2\|u\|_{H^1(\Omega)}^2.
\end{align*}
\medskip

\emph{Step 4: Conclusion.}
By the discussion in Steps 2 and 3 above, $B_{\mu}(\cdot, \cdot)$ is a scalar product and the Riesz representation theorem is applicable. Since $F \in (H^1_{\partial \Omega \setminus \Sigma_1,0}(\Omega))^{\ast}$ and also for $f_1 \in H^{-\frac{1}{2}}(\Sigma_1)$ the map
\begin{align*}
H^1_{\partial \Omega \setminus \Sigma_1, 0}(\Omega) \ni v \mapsto (v, f_1)_{L^2(\Sigma_1)},
\end{align*}
is a bounded linear functional on $H^1_{\partial \Omega \setminus \Sigma_1, 0}(\Omega)$, this yields the existence of a unique function $u:= G_{\mu}(F, f_1)$ such that
\begin{align*}
B_{\mu}(u,v) = \langle F, v \rangle + (f_1, v)_{L^2(\Sigma_1)} \mbox{ for all } v \in H^1_{\partial \Omega \setminus \Sigma_1,0}(\Omega).
\end{align*}
Moreover, the operator 
\begin{align*}
G_{\mu}: (H^1_{\partial \Omega \setminus \Sigma_1,0}(\Omega))^{\ast} \times H^{-\frac{1}{2}}(\Sigma_1) \rightarrow H^1_{\partial \Omega \setminus \Sigma_1,0}(\Omega)
\end{align*}
is bounded.
Now, the equation
\begin{align*}
B_{A,V,q}(u,v) - \lambda(u,v) = \tilde{F}(v)
\end{align*}
with $v$ as above and $\tilde{F}$ a functional on this space, is equivalent to 
\begin{align}
\label{eq:compact_perturb}
u= G_{\mu}((\mu + \lambda) u + \tilde{F}).
\end{align}
 As $G_{\mu}: L^2(\Omega) \times L^2(\Sigma_1) \rightarrow L^2(\Omega)$ is compact and self-adjoint, the spectral theorem for compact, self-adjoint operators yields the existence of a set $\tilde{M}$ such that for $\lambda \notin \tilde{M}$ \eqref{eq:compact_perturb} is (uniquely) solvable. Hence, the original equation is (uniquely) solvable outside of the set $M:=\left\{ \frac{1}{\lambda_j + \mu} \right\}_{j=1}^{\infty}$.
\end{proof}

With the well-posedness result available, it is possible to define the Poisson operator associated with the equation \eqref{eq:Schroedinger}.

\begin{defi}
\label{defi:Poisson}
Let $M\subset \C$ be as in Proposition \ref{prop:well-posed_12} and assume that $0 \notin M$.
Let $f \in H^{\frac{1}{2}}_{\overline{\Sigma_2}}$ and let $u \in H^1(\Omega)$ be the solution constructed in Proposition \ref{prop:well-posed_12} with $F=0, f_1 = 0$ and $f_2=f$. Then, we define the \emph{Poisson operator} 
\begin{align*}
P: H^{\frac{1}{2}}_{\overline{\Sigma_2}} \rightarrow H^1(\Omega), \ f \mapsto u.
\end{align*}
\end{defi}

We remark that by the apriori estimates from Proposition \ref{prop:well-posed_12} the Poisson operator is bounded.

\medskip

With the well-posedness of \eqref{eq:Schroedinger} at our disposal, we proceed to the well-posedness of the equation \eqref{eq:frac_Schr}. 

\begin{prop}[Well-posedness, $s\in (0,1)$]
\label{prop:well-posed_s}
Let $B_{s,A,V,q}$ denote the bilinear form from \eqref{eq:Bforms}. Then, there exists a countable set $M \subset \C$ such that if $\lambda \in \C \setminus M$ for any $F \in (H^1_{\partial \Omega \setminus \Sigma_1,0}(\Omega,d^{1-2s}))^{\ast}$, $f_2 \in H^{s}_{\overline{\Sigma_2}}$ and $f_1 \in H^{-s}(\Sigma_1)$ there is $u \in H^{1}_{\partial \Omega \setminus (\Sigma_1 \cup \Sigma_2),0}(\Omega,d^{1-2s})$ with $u|_{\Sigma_2}= f_2$ and with 
\begin{align*}
B_{s,A,V,q}(u,v) - \lambda(u,v)_{L^2(\Omega)} = \langle F, v \rangle + (f_1,v)_{L^2(\Sigma_1)} 
\end{align*}
for all $v\in H^{1}_{\partial \Omega \setminus \Sigma_1,0}(\Omega, d^{1-2s})$. Here $\langle \cdot, \cdot \rangle$ denotes the $(H^1(\Omega, d^{1-2s}))^{\ast}$, $H^1(\Omega, d^{1-2s})$ duality pairing.
 If $\lambda \notin M$, there exists a constant $C>0$ such that
\begin{align}
\label{eq:aprioris}
\|u\|_{H^1(\Omega, d^{1-2s})} \leq C (\|F\|_{(H^1_{\partial \Omega \setminus \Sigma_1, 0}(\Omega,d^{1-2s}))^{\ast}} + \|f\|_{H^{s}(\Sigma_2)}).
\end{align}
\end{prop}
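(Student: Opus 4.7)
The plan is to mirror the argument for the $s=\tfrac12$ case in Proposition \ref{prop:well-posed_12}, adapted to the weighted setting. I would carry out essentially the same four steps: reduction, continuity, coercivity via a compact perturbation, and the conclusion through Riesz representation plus the spectral theorem for compact self-adjoint operators. The substitutions $L^{2}(\Omega) \rightsquigarrow L^{2}(\Omega,d^{1-2s})$, $H^{1}(\Omega) \rightsquigarrow H^{1}(\Omega,d^{1-2s})$, $H^{1/2} \rightsquigarrow H^{s}$ will be systematic throughout.

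First, I would reduce to the case $f_{2}=0$ by applying the weighted extension result, Lemma \ref{lem:extension}, to obtain $E_{s}(f_{2})\in H^{1}(\Omega,d^{1-2s})$ with $E_{s}(f_{2})|_{\Sigma_{2}}=f_{2}$ and $\|E_{s}(f_{2})\|_{H^{1}(\Omega,d^{1-2s})}\le C\|f_{2}\|_{H^{s}(\Sigma_{2})}$ (arranging, via a cut-off, that the extension vanishes outside a neighbourhood of $\overline{\Sigma_{2}}$). Setting $u=u_{1}+E_{s}(f_{2})$ produces a new problem for $u_{1}$ with modified source $\widetilde F = F - L_{\lambda}(E_{s}(f_{2}))$, where $L_{\lambda}$ is understood weakly via $v\mapsto -B_{s,A,V,q}(E_{s}(f_{2}),v)$; this belongs to $(H^{1}_{\partial\Omega\setminus\Sigma_{1},0}(\Omega,d^{1-2s}))^{\ast}$ by the continuity estimate of Step 2.

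The continuity step is routine: each bulk term in $B_{s,A,V,q}$ is bounded by $\|u\|_{H^{1}(\Omega,d^{1-2s})}\|v\|_{H^{1}(\Omega,d^{1-2s})}$ using $A,V\in L^{\infty}$, and the boundary term is controlled by the weighted trace inequality of Lemma \ref{lem:trace}, which embeds $H^{1}(\Omega,d^{1-2s})$ continuously into $H^{s}(\partial\Omega)\hookrightarrow L^{2}(\partial\Omega)$. The coercivity step is the main obstacle. Here I would apply the weighted trace estimate of Lemma \ref{lem:boundary_trace_s}, squaring it to obtain
\begin{equation*}
\left|\int_{\Sigma_{1}} q|u|^{2}\,d\mathcal{H}^{n-1}\right| \le C\|q\|_{L^{\infty}}\bigl(\mu^{-2s}\|d^{\tfrac{1-2s}{2}}\nabla u\|_{L^{2}(\Omega)}^{2}+\mu^{2-2s}\|d^{\tfrac{1-2s}{2}}u\|_{L^{2}(\Omega)}^{2}\bigr),
\end{equation*}
and choosing $\mu$ large enough (depending on $\|q\|_{L^{\infty}}$) that the gradient term is absorbed into $\tfrac{1}{10}\|d^{(1-2s)/2}\nabla u\|_{L^{2}}^{2}$. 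The magnetic cross-terms $\int d^{1-2s}A u\cdot\overline{\nabla v}$ are handled in the same way by Young's inequality, yielding a constant $C\|A\|_{L^{\infty}}^{2}\|d^{(1-2s)/2}u\|_{L^{2}}^{2}$. Combining everything and adding a sufficiently large multiple $\mu_{0}(u,u)_{L^{2}(\Omega,d^{1-2s})}$ to $B_{s,A,V,q}$ — noting that a weighted Poincaré inequality in $H^{1}_{\partial\Omega\setminus\Sigma_{1},0}(\Omega,d^{1-2s})$ gives control of $\|d^{(1-2s)/2}u\|_{L^{2}}$ by $\|d^{(1-2s)/2}\nabla u\|_{L^{2}}$ (a standard consequence of the weight being $A_{2}$ and of $u$ vanishing on a nontrivial part of $\partial\Omega$) — furnishes an inner product $B_{\mu_{0}}$ on $H^{1}_{\partial\Omega\setminus(\Sigma_{1}\cup\Sigma_{2}),0}(\Omega,d^{1-2s})$ equivalent to the standard norm.

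Finally, the Riesz representation theorem applied to $B_{\mu_{0}}$ yields a bounded solution operator $G_{\mu_{0}}\colon (H^{1}_{\partial\Omega\setminus\Sigma_{1},0}(\Omega,d^{1-2s}))^{\ast}\times H^{-s}(\Sigma_{1})\to H^{1}_{\partial\Omega\setminus\Sigma_{1},0}(\Omega,d^{1-2s})$. The original equation becomes $u=G_{\mu_{0}}((\mu_{0}+\lambda)u+\widetilde F,f_{1})$. The map $u\mapsto G_{\mu_{0}}(u,0)$ is compact and self-adjoint from $L^{2}(\Omega,d^{1-2s})$ to itself, since the embedding $H^{1}(\Omega,d^{1-2s})\hookrightarrow L^{2}(\Omega,d^{1-2s})$ is compact (this uses that the weight $d^{1-2s}$ is locally bounded above and below away from $\partial\Omega$, plus a boundary-neighbourhood argument). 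The spectral theorem then produces a countable set $M\subset\mathbb{C}$ outside of which the Fredholm alternative gives unique solvability, and the a priori estimate \eqref{eq:aprioris} follows from the closed graph theorem applied to the resolvent.
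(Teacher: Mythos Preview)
Your proposal is correct and follows essentially the same route as the paper: reduction via the weighted extension $E_s$, continuity via Lemma~\ref{lem:trace}, coercivity via the weighted boundary--bulk trace estimate of Lemma~\ref{lem:boundary_trace_s} together with a weighted Poincar\'e inequality, and conclusion by Riesz representation plus the spectral theorem for the resulting compact self-adjoint operator. The only minor difference is bookkeeping in the compactness step (the paper phrases $G_\mu$ as a map $L^2(\Omega,d^{2s-1})\times L^2(\Sigma_1)\to L^2(\Omega,d^{1-2s})$ rather than directly on $L^2(\Omega,d^{1-2s})$), but this does not affect the argument.
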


\begin{rmk}
\label{rmk:sols}
As in Proposition \ref{prop:well-posed_12}, compared to the problem in \eqref{eq:frac_Schr}, we here consider the slightly more general setting of constructing (weak) solutions to
\begin{align}
\label{eq:Schroedinger_gen_s}
\begin{split}
-\nabla \cdot d^{1-2s} \nabla u - i A d^{1-2s} \cdot \nabla u - i \nabla \cdot (d^{1-2s} A u) + d^{1-2s}(|A|^2+ V) u & = F \mbox{ in } \Omega,\\
\lim\limits_{d(x) \rightarrow 0} d^{1-2s} \p_{\nu} u + qu &= f_1 \mbox{ on } \Sigma_1, \\
u & = f_2 \mbox{ on } \Sigma_2, \\
u & = 0 \mbox{ on } \partial \Omega \setminus (\Sigma_1 \cup \Sigma_2).
\end{split}
\end{align}
Again this will be convenient when discussing density properties by means of studying the adjoint equation.
For convenience of notation, we define
\begin{align*}
L_{\lambda,s}u:=-\nabla \cdot d^{1-2s} \nabla u - i A d^{1-2s} \cdot \nabla u - i \nabla \cdot (d^{1-2s} A u) + d^{1-2s}(|A|^2+ V) u .
\end{align*}
As in the case $s= \frac{1}{2}$, for $\lambda \notin M$ we define a \emph{weak solution} to \eqref{eq:Schroedinger_gen_s} to be the corresponding function $u \in H^1_{\partial \Omega \setminus (\Sigma_1\cup \Sigma_2),0}(\Omega, d^{1-2s})$ from Proposition \ref{prop:well-posed_s}.
\end{rmk}

The proof of Proposition \ref{prop:well-posed_12} follows along similar lines as the proof of Proposition \ref{prop:well-posed_s}. Due to the presence of the weights we however need to rely on suitable modifications of the boundary-bulk inequalities as recalled in Section \ref{sec:notation_spaces}.

\begin{proof}
\emph{Step 1: Reduction.}
As in the proof of Proposition \ref{prop:well-posed_12} we first reduce the setting to $f_2 = 0$ by considering $u=u_1 + E_s(f_2)$, where $E_s(f_2 )\in H^1_{\partial \Omega \setminus \Sigma_2,0}(\Omega, d^{1-2s})$ is obtained from Lemma \ref{lem:extension} and has the property that
$\|E_s(f_2)\|_{H^1_{\partial \Omega \setminus \Sigma_2,0}(\Omega, d^{1-2s})} \leq C \|f_2\|_{H^{s}(\Sigma_2)}$.

Working with the equation for $u_1$ yields an equation of the desired form with $u_1|_{\Sigma_2}= \tilde{f}_2 =0$ and a new inhomogeneity $\tilde{F}:=F-L_{\lambda,s}(E_s(f_2)) \in (H^1_{\partial \Omega \setminus \Sigma_1,0}(\Omega, d^{1-2s}))^{\ast}$. As in the case $s=\frac{1}{2}$, the functional $L_{\lambda,s}(E(f_2))$ is interpreted weakly, in that it is given by
\begin{align*}
H^1_{\partial \Omega \setminus \Sigma_1,0}(\Omega, d^{1-2s}) \ni v \mapsto B_{s,A,V,q}(E_s(f_2),v).
\end{align*}
With a slight abuse of notation, we drop the subscript in the notation for $u_1$ and the tildas in the notation for the data in the following.\\

\emph{Step 3: Continuity.}
The continuity of the bilinear form then is a consequence of the following observations and estimates:
\begin{itemize}
\item[(i)] \emph{Continuity of the boundary terms.} We observe that by Lemma \ref{lem:trace}
\begin{align*}
\left| \int\limits_{\partial \Omega} q u \overline{v} dx \right| 
& \leq \|q\|_{L^{\infty}(\partial \Omega)} \|u\|_{L^2(\partial \Omega)} \|v\|_{L^2(\partial \Omega)}\\
&\leq \|q\|_{L^{\infty}(\partial \Omega)} \|u\|_{H^s(\partial \Omega)} \|v\|_{H^s(\partial \Omega)}\\
&\leq  C\|q\|_{L^{\infty}(\partial \Omega)} \|u\|_{H^1( \Omega, d^{1-2s})} \|v\|_{H^1( \Omega, d^{1-2s})}.
\end{align*}
\item[(ii)] \emph{Continuity of the bulk terms.} As the continuity of all the bulk terms follows analogously, we only discuss the first magnetic term: In this case, by Cauchy-Schwarz, we obtain
\begin{align*}
\left| \int\limits_{\Omega} d^{1-2s} v A_1 \cdot \overline{\nabla u} dx \right| 
\leq \|A_1\|_{L^{\infty}(\Omega)} \|v\|_{L^{2}(\Omega, d^{1-2s})} \|u\|_{H^1(\Omega, d^{1-2s})}.
\end{align*}
\item[(iii)] \emph{Boundedness of the right hand side.} The mapping
\begin{align*}
H^{-s}(\Sigma_1) \ni f_1 \mapsto (f_1, v)_{L^2(\Sigma_1)}
\end{align*}
for $v\in H^{1}_{\partial \Omega \setminus \Sigma_1,0}(\Omega, d^{1-2s})$ satisfies the bound
\begin{align*}
\left| (f_1, v)_{L^2(\Sigma_1)} \right| 
\leq \|f_1\|_{H^{-s}(\Sigma_1)} \|v\|_{H^{s}(\partial \Omega)}
\leq \|f_1\|_{H^{-s}(\Sigma_1)} \|v\|_{H^1(\Omega, d^{1-2s})}.
\end{align*}
It is thus a bounded linear functional on $(H^{1}_{\partial \Omega \setminus \Sigma_1, 0}(\Omega, d^{1-2s}))^{\ast}$.
Similarly, for $F \in (H^{1}_{\partial \Omega \setminus \Sigma_1, 0}(\Omega, d^{1-2s}))^{\ast}$, by definition, the map $v\mapsto \langle F,v \rangle$ is a bounded linear functional on $H^{1}_{\partial \Omega \setminus \Sigma_1, 0}(\Omega, d^{1-2s})$. 
\end{itemize}

\emph{Step 4: Coercivity.}
For the coercivity estimate, we need to bound $B_{s,A,V,q}(u,u) $ from below. Again the bulk estimates follow from the Cauchy-Schwarz and Young's inequality. The main point is to consider the boundary contributions and to prove their coercivity. This is a consequence of the trace estimate from Lemma \ref{lem:boundary_trace_s}. Indeed, we deduce that for $\mu>1$ to be chosen below
\begin{align*}
\left| \int\limits_{\partial \Omega} q |u|^2 dx \right|
& \leq \|q\|_{L^{\infty}(\partial \Omega)} \|u\|_{L^2(\partial \Omega)}^2\\
& \leq C\|q\|_{L^{\infty}(\partial \Omega)} (\mu^{-2s} \|d^{\frac{1-2s}{2}} \nabla u\|_{L^2(\Omega)}^2 + \mu^{2-2s}\|d^{\frac{1-2s}{2}} u\|_{L^2(\Omega)}^2).
\end{align*}
Choosing $\mu>1$ such that 
\begin{align*}
C\|q\|_{L^{\infty}(\partial \Omega)} \mu^{-2s} \leq \frac{1}{10},
\end{align*}
we thus infer that 
\begin{align*}
\left| \int\limits_{\partial \Omega} q |u|^2 dx \right|
& \leq \frac{1}{10}\|u\|_{H^1(\Omega, d^{1-2s})}^2 + C \|q\|_{L^{\infty}(\Omega)}^{{\frac{1}{s}-1}} \|u\|_{L^2(\Omega, d^{1-2s})}^{{2}}.
\end{align*}
Next we note that there exists a constant $C>0$ such that for all $u\in H^{1}_{\partial \Omega \setminus \Sigma_1,0}(\Omega)$ it holds that
\begin{align*}
C \|\nabla u\|_{L^2(\Omega,d^{1-2s})} \geq \|u\|_{H^1(\Omega,d^{1-2s})}.
\end{align*}
This follows from the fact that $u|_{\partial \Omega \setminus \Sigma_1} =0$ and an application of Poincar\'e's inequality, see for instance \cite[Lemma 4.7]{RS17} and the proof of Lemma \ref{lem:boundary_trace_s}.
Combining all these observations and also invoking the estimate in Step 3(ii) for the bulk contributions (to which we still apply Young's inequality), we hence infer that the bilinear form $B_{s,A,V,q}(\cdot, \cdot)$ is coercive, i.e. that
\begin{align*}
B_{s,A,V,q}(u,u) \geq \frac{1}{2}\|u\|_{H^{1}(\Omega,d^{1-2s})}^{2} - C_{low} \|u\|_{L^2(\Omega, d^{1-2s})}^{2},
\end{align*}
where the constant $C_{low}>0$ depends on $s, \|A\|_{L^{\infty}(\Omega)},  \|V\|_{L^{\infty}(\Omega)}, \|q\|_{L^{\infty}(\partial \Omega)},n$. 

\medskip

\emph{Step 5: Conclusion.} With the available upper and lower bounds, we conclude as in the proof of Proposition \ref{prop:well-posed_12}. More precisely, for $\mu > C_{low}$ the bilinear form $B_{s,\mu}(u,v):= B_{s,A,V,q}(u,v) + \mu (u,v)_{L^2(\Omega, d^{1-2s})}$ is a scalar product. Hence, in combination with the third estimate in Step 3, the Riesz representation theorem is applicable and yields a unique solution $u = G(\bar{F}) \in H^{1}_{\partial \Omega \setminus \Sigma_1,0}(\Omega, d^{1-2s})$ with $\bar{F}=(F,f_1)$ to the equation
\begin{align*}
B_{s,\mu}(u,v) = (F, v)_{L^2(\Omega)} + (f_1,v)_{L^2(\Sigma_1)} \mbox{ for all } v\in H^{1}_{\partial \Omega \setminus \Sigma_1,0}(\Omega, d^{1-2s}).
\end{align*}
Using the compactness of the space $L^2(\Omega, d^{1-2s}) \subset H^1(\Omega, d^{1-2s})$ and the fact that 
\begin{align*}
G_{\mu}: L^2(\Omega, d^{2s-1}) {\times L^2(\Sigma_1)} \subset (H^1_{\partial \Omega \setminus \Sigma_1,0}(\Omega, d^{1-2s}))^{\ast} {\times H^{-s}(\Sigma_1)}\\
 \rightarrow H^1_{\partial \Omega \setminus \Sigma_1,0}(\Omega, d^{1-2s}) \subset L^2(\Omega, d^{1-2s}),
\end{align*}
the claim on the set $M \subset \R$ follows from the spectral theorem for self-adjoint, compact operators in the same way as in Proposition \ref{prop:well-posed_12} (see for instance \cite[Theorem 2.37 and Corollary 2.39]{McLean}).
\end{proof}

As in the case $s= \frac{1}{2}$ the well-posedness result allows us to define the Poisson operator associated with the Schrödinger equation \eqref{eq:frac_Schr}.

\begin{defi}
\label{defi:Poissons}
Let $M\subset \C$ be as in Proposition \ref{prop:well-posed_s} and assume that $0 \notin M$.
Let $f \in H^{s}_{\overline{\Sigma_2}}$ and let $u \in H^1(\Omega,d^{1-2s})$ be the solution constructed in Proposition \ref{prop:well-posed_12} with $F=0, f_1 = 0$ and $f_2 = f$. Then, we define the \emph{Poisson operator} 
\begin{align*}
P_s: H^{s}_{\overline{\Sigma_2}} \rightarrow H^1(\Omega, d^{1-2s}), \ f \mapsto u.
\end{align*}
\end{defi}

Again this operator is bounded by the apriori estimates from the well-posedness result in Proposition \ref{prop:well-posed_s}.

In order to simplify our discussion, for convenience we will, for the remainder of the article, always make the following assumption:

\begin{assume}
\label{as:EV}
For the remainder of the article we will assume that zero is not an eigenvalue of the Schrödinger operators \eqref{eq:Schroedinger} and \eqref{eq:frac_Schr}, i.e. we will assume that
 $\lambda=0 \notin M$, where $M$ denotes the sets constructed in Propositions \ref{prop:well-posed_12} and  \ref{prop:well-posed_s}, respectively.
 \end{assume}

We remark that as a consequence of Proposition \ref{prop:well-posed_s}, we also obtain the following regularity result for the weighted normal derivative:

\begin{lem}
\label{lem:normal_deriv}
Let $u$ be a weak solution to \eqref{eq:frac_Schr} possibly also with a bulk inhomogeneity $F \in L^2(\Omega,d^{2s-1})$.
Then, there exists a constant $C>0$ such that for each $\delta>0$ sufficiently small we have that $ d^{1-2s} \p_{\nu} u \in H^{-s}(\partial \Omega_{\delta})$ with
\begin{align}
\label{eq:normalHs}
\|d^{1-2s} \p_{\nu} u\|_{H^{-s}(\partial \Omega_{\delta})}
\leq C \left( \|F\|_{L^2(\Omega, d^{2s-1})} + \| u\|_{H^{s}(\partial \Omega)} \right),
\end{align}
where $\partial \Omega_{\delta}:= \{x + t\nu(x): \ x \in \partial \Omega,\ t \in (0,\delta)\}$ and where $\nu(x)$ denotes the inward pointing unit normal at a point $x\in \partial \Omega$.
Moreover,
\begin{align}
\label{eq:normalHsconv}
\|\lim\limits_{d \rightarrow 0} d^{1-2s} \p_{\nu} u - d^{1-2s} \p_{\nu} u\|_{H^{-s}(\partial \Omega_{\delta})} \rightarrow 0 \mbox{ as } \delta \rightarrow 0.
\end{align}
\end{lem}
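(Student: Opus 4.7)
The plan is to define the conormal derivative $d^{1-2s}\partial_\nu u$ by a duality pairing with test functions obtained from Lemma \ref{lem:extension}, and to establish uniform-in-$\tau$ $H^{-s}$ bounds on each parallel hypersurface $\Gamma_\tau := \{x + \tau\nu(x) : x\in\partial\Omega\}$, $\tau \in (0,\delta)$. In boundary normal coordinates $(x',t)$ this identifies the strip $\partial\Omega_\delta$ with $\partial\Omega \times (0,\delta)$, and the stated bound will then follow since the $H^{-s}(\partial\Omega_\delta)$-norm is naturally controlled by the supremum of the $H^{-s}(\Gamma_\tau)$-norms over $\tau \in (0,\delta)$.

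For the main estimate \eqref{eq:normalHs}, given $\varphi \in H^s(\Gamma_\tau)$ I would construct via a variant of Lemma \ref{lem:extension} an extension $\Phi \in H^1(\Omega, d^{1-2s})$ with $\Phi|_{\Gamma_\tau}=\varphi$, $\Phi|_{\partial\Omega}=0$, supported in a small one-sided neighbourhood of $\Gamma_\tau$ inside $\Omega_\tau' := \{x\in\Omega: d(x)>\tau\}$, and satisfying $\|\Phi\|_{H^1(\Omega,d^{1-2s})} \leq C \|\varphi\|_{H^s(\Gamma_\tau)}$ with $C$ uniform in $\tau$. Testing the weak formulation of \eqref{eq:Schroedinger_gen_s} against $\Phi$ and integrating by parts on the non-degenerate subdomain $\Omega_\tau'$ produces
$$\int_{\Gamma_\tau}(d^{1-2s}\partial_\nu u)\overline{\varphi}\, d\mathcal{H}^{n-1} = B_{s,A,V,q}(u,\Phi) - \int_{\Omega} F\overline{\Phi}\, dx,$$
with the bilinear form as in \eqref{eq:Bforms}. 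Its continuity together with Cauchy-Schwarz bound the right-hand side by $C(\|u\|_{H^1(\Omega,d^{1-2s})} + \|F\|_{L^2(\Omega,d^{2s-1})})\|\varphi\|_{H^s(\Gamma_\tau)}$, and Proposition \ref{prop:well-posed_s} delivers $\|u\|_{H^1(\Omega,d^{1-2s})} \leq C(\|u\|_{H^s(\partial\Omega)} + \|F\|_{L^2(\Omega,d^{2s-1})})$. Passing to the supremum over unit $\varphi$ yields \eqref{eq:normalHs} uniformly in $\tau$.

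The convergence \eqref{eq:normalHsconv} would then follow from a Cauchy argument: for $0<\tau_1<\tau_2<\delta$, subtracting the two defining identities against the same boundary datum $\varphi$, with a common extension $\Phi$ supported in $\{d<\tau_2\}$, expresses the difference of conormal traces as a bilinear integral plus an inhomogeneity integral over the thin slab $\{\tau_1<d<\tau_2\}$. Both terms vanish as $\delta \to 0$ by absolute continuity of the weighted Lebesgue integral, so $(d^{1-2s}\partial_\nu u)|_{\Gamma_\tau}$ converges in $H^{-s}$ to a limit as $\tau\to 0^+$, identified with $\lim_{d\to 0}d^{1-2s}\partial_\nu u$, at a rate that furnishes \eqref{eq:normalHsconv}.

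The main obstacle is twofold. First, the construction of $\Phi$ with a $\tau$-uniform constant must not deteriorate as $\tau\to 0$: this is arranged by pushing forward an extension from $\partial\Omega$ via the diffeomorphism $\partial\Omega\ni x\mapsto x+\tau\nu(x) \in \Gamma_\tau$, whose Jacobian and weight distortion remain controlled since $\Gamma_\tau$ is a $C^2$-small perturbation of $\partial\Omega$ for $\tau$ small. Second, the classical divergence theorem on $\Omega_\tau'$ requires sufficient regularity of $u$; this is handled by approximating $u$ in $H^1(\Omega,d^{1-2s})$ by smoother functions---density of such approximants being available since $d^{1-2s}$ is a Muckenhoupt $A_2$-weight for $s\in(0,1)$---applying the classical identity to each approximant, and passing to the limit using the uniform bounds described above.
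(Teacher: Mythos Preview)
Your proposal is correct and follows essentially the same approach as the paper: both arguments define the weighted conormal derivative by duality, pairing against extensions $E_s(\varphi)\in H^1(\Omega,d^{1-2s})$ of boundary data $\varphi\in H^s$, reducing the pairing to $B_{s,A,V,q}(u,E_s(\varphi))$ plus the inhomogeneity term via integration by parts on the non-degenerate subdomain, and then invoking the continuity of $B_{s,A,V,q}$ together with the a priori estimate from Proposition~\ref{prop:well-posed_s}; the convergence is likewise obtained in both by a Cauchy-sequence argument over thin slabs using absolute continuity of the weighted integrals. The only cosmetic difference is that you work hypersurface-by-hypersurface and take a supremum over $\tau$, whereas the paper (despite the strip notation for $\partial\Omega_\delta$) in its proof effectively works on the single hypersurface $\partial(\Omega_\delta)$ and obtains the uniform bound directly; your added care about the $\tau$-uniformity of the extension constant and the approximation step for the divergence theorem are reasonable elaborations of points the paper handles more tersely via interior regularity.
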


\begin{proof}
The fact that for any $\delta \in (0,1)$ sufficiently small $d^{1-2s} \p_{\nu} u|_{\partial \Omega_{\delta}} \in H^{-s}(\partial \Omega_{\delta})$ with a uniform estimate (in $\delta>0$) follows by duality and the weak form of the equation: Indeed, due to the validity of the equation \eqref{eq:frac_Schr} and the fact that this is a uniformly elliptic equation away from $\partial \Omega$, we have that $d^{1-2s}\p_{\nu} u \in L^2(\partial \Omega_{\delta})$ for any $\delta>0$. Integrating by parts, we further observe that for any $w\in H^s(\partial \Omega_{\delta})$ and an associated extension $E_s(w)\in H^1(\Omega_{\delta},d^{1-2s})$, we obtain
\begin{align}
\label{eq:normal_id}
(w, d^{1-2s} \p_{\nu} u)_{L^2(\partial \Omega_{\delta})}
= B_{s,A,V,q}(E_s(w),u).
\end{align}
Using \eqref{eq:normal_id} and the boundary estimate from Lemma \ref{lem:extension}, we hence estimate
\begin{align*}
|(w, d^{1-2s}\p_{\nu} u)_{L^2(\partial \Omega_{\delta})}|
&\leq |(E_s (w),F)_{L^2(\Omega_{\delta})}| + |(\nabla E_s(w), d^{1-2s} \nabla u)_{L^2(\Omega_{\delta})}|\\
&\leq C \|w\|_{H^s(\partial \Omega_{\delta})} (\| u\|_{H^1(\Omega, d^{1-2s})} + \|F\|_{L^2(\Omega,d^{2s-1})} )\\
& \leq C\|w\|_{H^s(\partial \Omega_{\delta})}(\|u\|_{H^s(\partial \Omega)}+ \|F\|_{L^2(\Omega,d^{2s-1})}).
\end{align*}
Thus, taking the supremum in $w\in H^s(\partial \Omega_{\delta})$ with $\|w\|_{H^{s}(\partial \Omega_{\delta})}=1$ implies the claim \eqref{eq:normalHs}.

Moreover, by the definition of the normal derivative by means of the bilinear form as in \eqref{eq:normal_id} for $\delta_1, \delta_2 >0$ small,
\begin{align*}
&\|(d^{1-2s} \p_{\nu} u)(\cdot + \delta_1 \nu) - (d^{1-2s} \p_{\nu} u)(\cdot + \delta_2 \nu)\|_{H^{-s}(\partial \Omega)}\\
&= \sup\limits_{\|w\|_{H^s(\partial \Omega)}\leq 1} (w, (d^{1-2s} \p_{\nu} u)(\cdot + \delta_1 \nu) - (d^{1-2s} \p_{\nu} u)(\cdot + \delta_1 \nu))_{L^2(\partial \Omega)}\\
&= \sup\limits_{\|w\|_{H^s(\partial \Omega)}\leq 1}\left( (E_s(w)\chi_{\Omega_{\delta_1}}-E_s(w)\chi_{\Omega_{\delta_2}},F)_{L^2(\Omega)} \right.\\
& \quad \left.+ (d^{1-2s}(\chi_{\Omega_{\delta_1}}\nabla E_s(w)- \chi_{\Omega_{\delta_2}}\nabla E_s(w)), \nabla u)_{L^2(\Omega)}\right)\\
& \leq C \|w\|_{H^s(\partial \Omega)}\left( \|(\chi_{\Omega_{\delta_1}}-\chi_{\Omega_{\delta_2}}) F\|_{L^2(\Omega, d^{2s-1})} + \|d^{1-2s}(\chi_{\Omega_{\delta_1}}-\chi_{\Omega_{\delta_2}}) \nabla u\|_{L^2(\Omega)} \right)
\rightarrow 0 \\
&\qquad \mbox{ as } \delta_1, \delta_2 \rightarrow 0.
\end{align*}
Here we used that $\nabla u \in L^2(\Omega, d^{1-2s})$ by the apriori estimates from the well-posedness results and have set $\Omega_{\delta}:= \{x \in \Omega: \ \dist(x, \partial \Omega)\geq \delta\}$ for $\delta>0$ sufficiently small. 
This proves that $\{(d^{1-2s} \p_{\nu} u)(\cdot + n^{-1} \nu)\}_{n \in \N}$ is a Cauchy sequence in $H^{-s}(\partial \Omega)$, that $\lim\limits_{n \rightarrow \infty}(d^{1-2s} \p_{\nu} u)(\cdot + n^{-1} \nu)$ exists in $H^{-s}(\partial \Omega)$ as $n \rightarrow \infty$ and that \eqref{eq:normalHsconv} holds. 
\end{proof}

With the well-posedness results of Propositions \ref{prop:well-posed_12}, \ref{prop:well-posed_s} and the global Assumption \ref{as:EV} in hand, we can now also define the (partial data) Dirichlet-to-Neumann maps which we will study in the sequel.

\begin{defi}[Partial Dirichlet-to-Neumann maps]
\label{defi:DtN}
Let $s\in (0,1)$ and let $B_{A,V,q}(\cdot, \cdot)$ and $B_{s,A,V,q}(\cdot, \cdot)$ denote the bilinear forms from \eqref{eq:Bform12}, \eqref{eq:Bforms}. We then define the (partial) Dirichlet-to-Neumann maps $\Lambda_{A,V,q}:\widetilde{H}^{\frac{1}{2}}(\Sigma_2) \rightarrow H^{- \frac{1}{2}}(\Sigma_2)$ and $\Lambda_{s,A,V,q}:\widetilde{H}^{s}(\Sigma_2) \rightarrow H^{-s}(\Sigma_2)$ weakly as
\begin{align*}
\langle \Lambda_{A,V,q} f, g \rangle_{\ast} &:= B_{A,V,q}(u_f, E(g)), \\
\langle \Lambda_{s,A,V,q} f, g \rangle_{\ast_s} &:= B_{s, A,V,q}(u_f, E_s(g)), 
\end{align*}
where $E(g)$ denotes an $H^1_{\partial \Omega \setminus (\Sigma_1 \cup \Sigma_2),0}(\Omega)$ extension of $g$ into $\Omega$ and $u_f$ denotes a weak solution (in the sense of Proposition \ref{prop:well-posed_12} of \eqref{eq:Schroedinger}). Similarly, $E_s(g)$ is an $H^1_{\partial \Omega \setminus (\Sigma_1 \cup \Sigma_2),0}(\Omega, d^{1-2s})$ extension of $g$ into $\Omega$ and $u_f$ denotes a weak solution (in the sense of Proposition \ref{prop:well-posed_s}) of \eqref{eq:frac_Schr}. Here the notation $\langle \cdot, \cdot \rangle_{\ast}$ and $\langle \cdot, \cdot \rangle_{\ast_s}$ denotes the duality pairing between $H^{-\frac{1}{2}}(\Sigma_2)$ and $H^{\frac{1}{2}}(\Sigma_2)$ and between $H^{-s}(\Sigma_2)$ and $H^{s}(\Sigma_2)$, respectively.
\end{defi}

\begin{rmk}
By definition we of course have that $\Lambda_{A,V,q}= \Lambda_{\frac{1}{2},A,V,q}$.
\end{rmk}

As in the standard (partial data) setting, these Dirichlet-to-Neumann maps are well-defined and do not depend on the choice of the extension.

\begin{lem}
\label{lem:welldef}
Let $\Lambda_{A,V,q}:\widetilde{H}^{\frac{1}{2}}(\Sigma_2) \rightarrow H^{- \frac{1}{2}}(\Sigma_2)$ and $\Lambda_{s,A,V,q}: \widetilde{H}^{s}(\Sigma_2) \rightarrow H^{-s}(\Sigma_2)$ be as in Definition \ref{defi:DtN}. Then these maps are well-defined, i.e. they do not depend on the choice of the extension $E(g)$ and $E_s(g)$. Moreover, both maps are linear and bounded.
\end{lem}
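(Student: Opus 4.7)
The plan is to verify the three claims (independence of extension, linearity, boundedness) for both maps in parallel, since the arguments are essentially identical modulo the degenerate weight $d^{1-2s}$. I will focus on $\Lambda_{A,V,q}$; the argument for $\Lambda_{s,A,V,q}$ is analogous, using the weighted trace/extension estimates (Lemmas \ref{lem:trace} and \ref{lem:extension}) and the apriori bound \eqref{eq:aprioris} in place of \eqref{eq:apriori1/2}.

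For independence of the extension, let $E_1(g), E_2(g)$ be two elements of $H^1_{\partial\Omega\setminus(\Sigma_1\cup\Sigma_2),0}(\Omega)$ with trace equal to $g$ on $\Sigma_2$. Their difference $w := E_1(g) - E_2(g)$ vanishes on $\Sigma_2$ and on $\partial\Omega\setminus(\Sigma_1\cup\Sigma_2)$, hence lies in $H^1_{\partial\Omega\setminus\Sigma_1,0}(\Omega)$, which is exactly the space of admissible test functions in Proposition \ref{prop:well-posed_12}. Since $u_f$ is the weak solution associated with data $F=0$, $f_1=0$, $f_2=f$, plugging $w$ into \eqref{eq:weak_form} (with $\lambda=0$, which is allowed by Assumption \ref{as:EV}) gives $B_{A,V,q}(u_f,w)=0$, so $B_{A,V,q}(u_f,E_1(g)) = B_{A,V,q}(u_f,E_2(g))$. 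This is the key step.

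For linearity in $g$, the bilinear form $B_{A,V,q}$ is linear in its second slot, and any extension operator can be chosen linearly (for example, the harmonic extension of the zero-extension of $g\in\widetilde H^{1/2}(\Sigma_2)$ to $\partial\Omega$). Linearity in $f$ comes from the linearity of the Poisson operator $P$: given $f_1,f_2$ and $\alpha,\beta\in\mathbb{C}$, the function $\alpha u_{f_1} + \beta u_{f_2}$ is a weak solution with boundary data $\alpha f_1 + \beta f_2$, and by the uniqueness part of Proposition \ref{prop:well-posed_12} it coincides with $u_{\alpha f_1 + \beta f_2}$.

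For boundedness, I will combine three ingredients already in hand. First, the continuity estimate established in Step 2 of the proof of Proposition \ref{prop:well-posed_12} gives
\[
|B_{A,V,q}(u_f, E(g))| \leq C\,\|u_f\|_{H^1(\Omega)}\,\|E(g)\|_{H^1(\Omega)}.
\]
Second, the apriori estimate \eqref{eq:apriori1/2} yields $\|u_f\|_{H^1(\Omega)} \leq C\|f\|_{H^{1/2}(\Sigma_2)}$. Third, choosing $E(g)$ to be the harmonic extension of the zero extension of $g$ (which is well defined since $g \in \widetilde H^{1/2}(\Sigma_2)$) gives $\|E(g)\|_{H^1(\Omega)} \leq C\|g\|_{H^{1/2}(\Sigma_2)}$. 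Taking the supremum over $g$ in the unit ball of $\widetilde H^{1/2}(\Sigma_2)$ produces the desired operator norm bound on $\Lambda_{A,V,q}$. For $\Lambda_{s,A,V,q}$, exactly the same chain works after replacing the $H^1(\Omega)$ norms by $H^1(\Omega,d^{1-2s})$, invoking the coercivity/continuity from the proof of Proposition \ref{prop:well-posed_s}, and using Lemma \ref{lem:extension} to build an $H^1(\Omega, d^{1-2s})$-bounded extension. No step is substantially harder than the others; the only mild care is needed in matching the correct trace space $\widetilde H^s(\Sigma_2)$ so that zero-extension of $g$ across $\partial\Sigma_2$ remains in $H^s(\partial\Omega)$.
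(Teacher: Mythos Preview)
Your proof is correct and follows essentially the same approach as the paper's own argument: both derive independence of the extension from the fact that the difference of two extensions lies in the test-function space $H^1_{\partial\Omega\setminus\Sigma_1,0}(\Omega)$ (so the weak equation forces $B_{A,V,q}(u_f,\cdot)$ to vanish on it), and both deduce linearity and boundedness from the linearity of the equation and the apriori estimates \eqref{eq:apriori1/2}, \eqref{eq:aprioris}. Your version simply spells out the boundedness chain (continuity of $B_{A,V,q}$, control of $u_f$, bounded extension of $g$) more explicitly than the paper does.
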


\begin{proof}
The independence of the choice of the extension follows from the well-posedness theory. Indeed, considering two extensions $E(g)$ and $\tilde{E}(g)$ of $g\in \widetilde{H}^{1/2}(\Sigma_2)$, we deduce that $E(g)-\tilde{E}(g) \in H^{1}_{\partial \Omega \setminus \Sigma_1,0}(\Omega)$. Hence,
we obtain that $B_{A,V,q}(u_f, E(g)-\tilde{E}(g))=0$ since $u_f$ is a weak solution to the equation \eqref{eq:Schroedinger}. A similar argument holds for the weighted operator. The linearity of the map follows from the linearity of the Schrödinger equations \eqref{eq:Schroedinger} and \eqref{eq:frac_Schr}. The boundedness follows from the apriori estimates \eqref{eq:apriori1/2} and \eqref{eq:aprioris}.
\end{proof}

As in the classical setting, the (partial data) Dirichlet-to-Neumann maps are self-adjoint operators:

\begin{lem}[Symmetry]
\label{lem:symm}
Let $\Lambda_{A,V,q}$ and $\Lambda_{s,A,V,q}$ be as in \eqref{defi:DtN}. Then, we have
\begin{align*}
\langle \Lambda_{A,V,q} f, g\rangle_{\ast} &= \langle  f, \Lambda_{A,V,q} g\rangle_{\ast},\\
\langle \Lambda_{s,A,V,q} f,g\rangle_{\ast_s} &= \langle  g, \Lambda_{s,A,V,q} g\rangle_{\ast_s}.
\end{align*}
\end{lem}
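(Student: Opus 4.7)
The plan rests on two ingredients that are standard in Calder\'on type problems: (i) the Hermitian symmetry of the sesquilinear forms $B_{A,V,q}$ and $B_{s,A,V,q}$ under interchange of arguments, and (ii) the possibility of replacing the auxiliary extensions $E(g)$, $E_s(g)$ from Definition~\ref{defi:DtN} with the actual solutions $u_g$, thanks to the weak formulations of Propositions~\ref{prop:well-posed_12} and \ref{prop:well-posed_s}. The key observation is that, although the extension $E(g)$ is not unique, the difference $E(g)-u_g$ is a legitimate test function in the weak formulation satisfied by $u_f$, which reduces everything to a symmetric expression in the two solutions.

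More precisely, given $f,g \in \widetilde{H}^{1/2}(\Sigma_2)$, both $u_g$ and any admissible extension $E(g)$ lie in $H^1_{\partial \Omega \setminus (\Sigma_1 \cup \Sigma_2),0}(\Omega)$ with common trace $g$ on $\Sigma_2$. Thus $E(g)-u_g$ vanishes on $\partial\Omega \setminus \Sigma_1$, i.e.\ it belongs to $H^1_{\partial \Omega \setminus \Sigma_1,0}(\Omega)$, and is therefore admissible as a test function in the weak equation solved by $u_f$. Since $u_f$ is a weak solution with $F=0$ and $f_1=0$, this gives $B_{A,V,q}(u_f, E(g)-u_g)=0$, so that
$$\langle \Lambda_{A,V,q} f, g\rangle_{\ast} \;=\; B_{A,V,q}(u_f, E(g)) \;=\; B_{A,V,q}(u_f, u_g),$$
and, by the same argument with the roles of $f$ and $g$ interchanged, $\langle \Lambda_{A,V,q} g, f\rangle_{\ast} = B_{A,V,q}(u_g, u_f)$.

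The second step is a short pointwise check: using that $A$, $V$, $q$ are real-valued, a direct computation term-by-term in \eqref{eq:Bform12} shows that
$$B_{A,V,q}(u,v) \;=\; \overline{B_{A,V,q}(v,u)} \quad \text{for all } u,v \in H^1(\Omega,\mathbb{C}),$$
the mild subtlety being that the two magnetic cross-terms $+i\bar v A\cdot\nabla u$ and $-iAu\cdot\overline{\nabla v}$ exchange roles and sign under conjugation and swapping of arguments. Combining this Hermitian symmetry with Step~1 yields
$$\langle \Lambda_{A,V,q} f, g\rangle_{\ast} \;=\; B_{A,V,q}(u_f, u_g) \;=\; \overline{B_{A,V,q}(u_g, u_f)} \;=\; \overline{\langle \Lambda_{A,V,q} g, f\rangle_{\ast}},$$
which is the claimed identity once $\langle \cdot,\cdot\rangle_{\ast}$ is interpreted as the sesquilinear pairing consistent with the $L^2$ inner product appearing on the right-hand side of Definition~\ref{defi:DtN}.

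The weighted case $s\in(0,1)$ is identical: one replaces $E(g)$ by the weighted extension $E_s(g)$ from Lemma~\ref{lem:extension}, the Sobolev spaces by their $d^{1-2s}$-weighted analogues, and the bilinear form by $B_{s,A,V,q}$ from \eqref{eq:Bforms}; the Hermitian symmetry check is unchanged since $d^{1-2s}$ is real and positive, and the admissibility of $E_s(g)-u_g$ as a test function follows exactly as before from the shared trace on $\Sigma_2$ and the vanishing on $\partial\Omega \setminus (\Sigma_1\cup\Sigma_2)$. There is no genuine obstacle in this argument; the only points requiring care are (a) correctly identifying the function spaces so that $E(g)-u_g$ and $E_s(g)-u_g$ are legitimate test functions, and (b) checking the conjugation identity for the magnetic sesquilinear form without sign mistakes.
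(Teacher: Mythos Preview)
Your proof is correct and follows essentially the same approach as the paper: you use that $u_g$ is itself an admissible extension of $g$ (so that $\langle \Lambda_{A,V,q} f, g\rangle_\ast = B_{A,V,q}(u_f,u_g)$), then invoke the Hermitian symmetry of the sesquilinear form. The paper's proof is a two-sentence version of the same argument; your additional care in verifying the test-function admissibility and the conjugation identity for the magnetic terms is welcome but not a departure in strategy.
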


\begin{proof}
The claim follows from the fact that two solutions $u_f$ and $u_g$ associated with the data $f,g$ in \eqref{eq:Schroedinger} or \eqref{eq:frac_Schr} are particular extensions of $f,g \in \widetilde{H}^s(\Sigma_2)$. Since the bilinear forms $B_{A,V,q}(\cdot, \cdot)$ and $B_{s,A,V,q}(\cdot, \cdot)$ are symmetric (with respect to the complex scalar product) the claim follows.
\end{proof}

Furthermore, a central Alessandrini identity involving all potentials holds true:

\begin{lem}[Alessandrini]
\label{lem:Aless}
Let $A_j,V_j,q_j$ and $\Lambda_{s,A_j,V_j,q_j}$ with $j \in \{1,2\}$ be as above. Then, for two solutions $u_1, u_2$ of \eqref{eq:frac_Schr} associated with the respective boundary data and potentials,
\begin{align*}
\langle (\Lambda_{s,A_1,V_1,q_1} - \Lambda_{s,A_2,V_2, q_2})f_1, f_2 \rangle_{\ast_s}
&= \int\limits_{\Omega}(V_1 - V_2 + A_1^2 - A_2^2) u_1 \overline{u_2} d^{1-2s} dx \\
& \quad + i \int\limits_{\Omega} d^{1-2s} (A_1 - A_2) \cdot (u_1 \overline{\nabla u_2} - u_2 \cdot \overline{\nabla  u_1})   dx \\
& \quad + \int\limits_{\Sigma_1}(q_1- q_2) u_1 \overline{u_2} d \mathcal{H}^{n-1}.
\end{align*}
\end{lem}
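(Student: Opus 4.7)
The plan is to express both Dirichlet-to-Neumann pairings as values of the bilinear form on the \emph{same} pair of functions $(u_1,u_2)$, and then read off the claim by subtracting term by term. By Definition~\ref{defi:DtN} and the extension-independence of Lemma~\ref{lem:welldef}, we may choose any admissible extension of $f_2$ when evaluating the pairings. Since $u_2 \in H^1_{\partial\Omega\setminus(\Sigma_1\cup\Sigma_2),0}(\Omega,d^{1-2s})$ with $u_2|_{\Sigma_2}=f_2$, the solution $u_2$ itself is such an extension, which immediately gives $\langle \Lambda_{s,A_1,V_1,q_1}f_1, f_2\rangle_{\ast_s} = B_{s,A_1,V_1,q_1}(u_1, u_2)$.

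For the second pairing, I would introduce an auxiliary weak solution $\tilde u_1 \in H^1_{\partial\Omega\setminus(\Sigma_1\cup\Sigma_2),0}(\Omega,d^{1-2s})$ of \eqref{eq:frac_Schr} associated with potentials $(A_2,V_2,q_2)$ and Dirichlet data $f_1$ on $\Sigma_2$, so that using $u_2$ again as extension of $f_2$ yields $\langle \Lambda_{s,A_2,V_2,q_2}f_1, f_2\rangle_{\ast_s} = B_{s,A_2,V_2,q_2}(\tilde u_1, u_2)$. The decisive simplification is then the identification $B_{s,A_2,V_2,q_2}(\tilde u_1, u_2) = B_{s,A_2,V_2,q_2}(u_1, u_2)$. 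Indeed, $w := u_1 - \tilde u_1$ lies in $H^1_{\partial\Omega\setminus\Sigma_1,0}(\Omega,d^{1-2s})$ (both summands share the trace $f_1$ on $\Sigma_2$ and vanish on $\partial\Omega\setminus(\Sigma_1\cup\Sigma_2)$), and the Hermitian symmetry $B_{s,A,V,q}(a,b)=\overline{B_{s,A,V,q}(b,a)}$, verified by a direct inspection of \eqref{eq:Bforms} under $\nu\cdot A=0$, combined with the weak formulation for $u_2$, gives $B_{s,A_2,V_2,q_2}(w, u_2) = \overline{B_{s,A_2,V_2,q_2}(u_2, w)} = 0$.

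Once both pairings have been rewritten as $B_{s,A_j,V_j,q_j}(u_1,u_2)$, the remaining work is to expand the difference. The second-order kinetic contribution $\int_\Omega d^{1-2s}\nabla u_1 \cdot \overline{\nabla u_2}\,dx$ cancels between the two bilinear forms; the zeroth-order bulk terms assemble into $\int_\Omega d^{1-2s}(V_1-V_2+|A_1|^2-|A_2|^2)u_1\overline{u_2}\,dx$; the boundary integrals combine into $\int_{\Sigma_1}(q_1-q_2)u_1\overline{u_2}\,d\mathcal{H}^{n-1}$; and the two first-order magnetic terms, after factoring out $A_1-A_2$ and the common $id^{1-2s}$, produce the stated imaginary gradient expression. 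The only non-routine step in the whole argument is the replacement of $\tilde u_1$ by $u_1$ in the second bilinear form via the test-function computation above; once this is in place, the identity follows by elementary bookkeeping.
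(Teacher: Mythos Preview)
Your proof is correct and amounts to essentially the same argument as the paper's. The only difference is packaging: the paper invokes the self-adjointness Lemma~\ref{lem:symm} to rewrite $\langle \Lambda_{s,A_2,V_2,q_2} f_1, f_2\rangle_{\ast_s} = \overline{\langle \Lambda_{s,A_2,V_2,q_2} f_2, f_1\rangle_{\ast_s}} = \overline{B_{s,A_2,V_2,q_2}(u_2,u_1)} = B_{s,A_2,V_2,q_2}(u_1,u_2)$ directly (using $u_1$ as the extension of $f_1$), whereas you introduce the auxiliary solution $\tilde u_1$ and then eliminate it via the Hermitian symmetry of $B$ and the weak equation for $u_2$ --- which is precisely the content of Lemma~\ref{lem:symm} unpacked.
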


\begin{proof}
This follows by using the symmetry result of Lemma \ref{lem:symm} in combination with the structure of $B_{s,A,V,q}$ and the fact that all (bulk, boundary and gradient) potentials are real valued:
\begin{align*}
\langle (\Lambda_{s,A_1,V_1,q_1} - \Lambda_{s,A_2,V_2, q_2})f_1, f_2 \rangle_{\ast_s}
&= \langle \Lambda_{s,A_1,V_1,q_1} f_1, f_2 \rangle_{\ast_s}
- \langle f_1 , \Lambda_{s,A_2, V_2,q_2} f_2 \rangle_{\ast_s}\\
&= B_{s,A_1, V_1, q_1} (u_1, u_2) - B_{s,A_2, V_2,q_2}(u_1, u_2)\\
& =  \int\limits_{\Omega}(V_1 - V_2 + A_1^2 - A_2^2) u_1 \overline{u_2} d^{1-2s} dx
+ \int\limits_{\Sigma_1} (q_1 - q_2) u_1 \overline{u}_2 d \mathcal{H}^{n-1}\\
& \quad + i \int\limits_{\Omega} d^{1-2s} (A_1 - A_2) \cdot (u_1 \overline{\nabla u_2} - u_2 \cdot \overline{\nabla  u_1})   dx .
\end{align*}
This proves the claim.
\end{proof}

\section{Simultaneous Runge Approximation in the Bulk and on the Boundary -- Resolution of the Question (Q1) for $s= \frac{1}{2}$}
\label{sec:Q11}

In this section we discuss the resolution of the question (Q1) for the case $s= \frac{1}{2}$ by proving simultaneous Runge approximation results. This requires a certain ``safety distance'' between $\Omega_1$ and the sets $\Sigma_1, \Sigma_2$ and a topological condition on the connectedness of $\Omega \setminus \Omega_1$. We refer to the set-up which had been layed out in Section \ref{sec:not_sets} for the precise conditions. Although our setting could have been generalized to allow for $\Omega_1$ including some boundary portions (see for instance \cite{RS20}), for clarity of exposition, we do not address this in the present article.

Let 
\begin{align}
\label{eq:solutions_spaces}
\begin{split}
S_{A,V,q}&:= \{u \in L^2(\Omega): u \mbox{ is a weak solution to \eqref{eq:Schroedinger} in } \Omega \},\\
\tilde{S}_{A,V,q}&:= \{u \in H^1(\Omega_1): u \mbox{ is a weak solution to \eqref{eq:Schroedinger} in } \Omega_1 \} \subset L^2(\Omega_1).
\end{split}
\end{align}

Here by a weak solution we mean a solution as obtained in our well-posedness discussion in Section \ref{sec:well_posed1}. For simplicity, we also simply set $S_{V,q}:= S_{0,V,q}$ and $\tilde{S}_{V,q}:= \tilde{S}_{0,V,q}$.
 
As a first step towards answering the question (Q1), we prove the simultaneous Runge approximation result (in the absence of magnetic potentials) from Lemma \ref{lem:sim_dense}.

\begin{rmk}
Together with the (known) existence results of whole space CGO solutions, this approximation result allows us to recover the potentials $V\in L^{\infty}(\Omega_1)$ and $q \in L^{\infty}(\partial \Omega)$ simultaneously in the inverse problem for \eqref{eq:Schroedinger}. Instead of explaining this at this point already, we refer to the proof of Theorem \ref{thm:Q11/2}, where this is deduced even in the presence of magnetic potentials.
\end{rmk}

\begin{proof}[Proof of Lemma \ref{lem:sim_dense}]
By the Hahn-Banach theorem, it suffices to prove that if $v=(v_1, v_2) \in L^2(\Sigma_1)\times L^2(\Omega_1)$ satisfies $v \perp \mathcal{R}_{bb}$ (with respect to the scalar product in $L^2(\Sigma_1) \times L^2(\Omega_1)$), then we have
\begin{align*}
v \perp (L^2(\Sigma_1) \times \tilde{S}_{V,q}).
\end{align*}

To this end, let $f \in C^\infty_c(\Sigma_2)$ and define $u:= P f$. Moreover, let $w$ be a solution to the associated adjoint problem
\begin{align}
\label{eq:12dual_prob}
\begin{split}
-\D w + Vw & = v_2 \chi_{\Omega_1} \mbox{ in } \Omega,\\
\p_\nu w +qw & = v_1 \mbox{ on } \Sigma_1,\\
w & = 0 \mbox{ on } \partial \Omega \setminus \Sigma_1.
\end{split}
\end{align}
Here $\chi_{\Omega_1}$ denotes the characteristic function of the set $\Omega_1$.
\noindent By the assumption $v \perp \mathcal{R}_{bb}$ and the definitions of $u$ and $w$, we have
\begin{align}
\label{eq:slightly_formal}
\begin{split}
0 & = (v_1,u|_{\Sigma_1})_{L^2(\Sigma_1)}+ (v_2,u|_{\Omega_1})_{L^2(\Omega_1)} \\ 
   & = \langle \p_\nu w +qw,u-f \rangle_{\ast} + (-\D w + Vw,u)_{L^2(\Omega)} \\
   & = \langle \p_\nu w +qw,u-f\rangle_{\ast}+ (-\D u +Vu,w)_{L^2(\Omega)} + \langle \p_\nu u,w\rangle_{\ast} - \langle  u,\p_\nu w\rangle_{\ast} \\ 
   & = \langle  qw,u\rangle_{\ast} -  \langle  \p_\nu w +qw,f\rangle_{\ast}+ \langle  \p_\nu u,w\rangle_{\ast} \\ 
   & = -  \langle \p_\nu w +qw,f\rangle_{\ast} \;,
\end{split}
\end{align}

\noindent where we integrated by parts twice and where $\langle  \cdot, \cdot \rangle_{\ast}$ denotes the $H^{-\frac{1}{2}}(\Sigma_1)$, $H^{\frac{1}{2}}(\Sigma_1)$ duality pairing. We remark that this computation which -- a priori is formal, since due to the mixed boundary conditions $w,u$ may not be in $H^2(\Omega)$ -- can be justified by considering the identities in a smaller domain $\Omega_{\epsilon}$ for $\epsilon>0$ sufficiently small first and then passing to the limit $\epsilon \rightarrow 0$. More precisely, by standard regularity theory, we obtain that $w, u \in H^2(\Omega_{\epsilon})$ which allows us to justify the following manipulations:
\begin{align*}
\label{eq:int_by_parts_dense}
\begin{split}
& (\p_\nu w +qw,u- u|_{\partial \Omega_{\epsilon}\setminus \Sigma_{1,\epsilon}})_{L^2(\p\Omega_{\epsilon})}+ (-\D w + Vw,u)_{L^2(\Omega_{\epsilon})} \\
   & = (\p_\nu w +qw,u- u|_{\partial \Omega_{\epsilon}\setminus \Sigma_{1,\epsilon}})_{L^2(\p\Omega_{\epsilon})}+ (-\D u +Vu,w)_{L^2(\Omega_{\epsilon})} + (\p_\nu u,w)_{L^2(\partial\Omega_{\epsilon})} - ( u,\p_\nu w)_{L^2(\partial\Omega_{\epsilon})} \\ 
   & = (\p_\nu w +qw,u- u|_{\partial \Omega_{\epsilon}\setminus \Sigma_{1,\epsilon}})_{L^2(\p\Omega_{\epsilon})} + (\p_\nu u,w)_{L^2(\partial\Omega_{\epsilon})} - ( u,\p_\nu w)_{L^2(\partial\Omega_{\epsilon})} .
\end{split}
\end{align*}
Here $\partial \Omega_{\epsilon}$ is defined as in \eqref{eq:boundary_not} and $\Sigma_{1,\epsilon}:=\{x\in \Omega: \ x = y + \epsilon \nu(y), \ y \in \Sigma_1\}$.
Then passing to the limit $\epsilon \rightarrow 0$ and using the observations from Lemma \ref{lem:normal_deriv} allows us to recover the first and fourth lines in \eqref{eq:slightly_formal}, i.e. 
\begin{align*}
&(\p_\nu w +qw,u- u|_{\partial \Omega_{\epsilon}\setminus \Sigma_{1,\epsilon}})_{L^2(\p\Omega_{\epsilon})}+ (-\D w + Vw,u)_{L^2(\Omega_{\epsilon})} \\
& \rightarrow \langle \p_\nu w +qw,u- f\rangle_{\ast}+ ( v_2,u|_{\Omega_1} )_{L^2(\Omega_1)},\\
& (\p_\nu w +qw,u- u|_{\partial \Omega_{\epsilon}\setminus \Sigma_{1,\epsilon}})_{L^2(\p\Omega_{\epsilon})} + (\p_\nu u,w)_{L^2(\partial\Omega_{\epsilon})} - ( u,\p_\nu w)_{L^2(\partial\Omega_{\epsilon})}\\
&\rightarrow  \langle qw,u \rangle_{\ast} -  \langle\p_\nu w +qw,f\rangle_{\ast}+ \langle\p_\nu u,w\rangle_{\ast} .
\end{align*}
This then allows us to conclude the identity $\langle\p_{\nu} w + q w, f \rangle_{\ast} =0$ as in the formal argument from \eqref{eq:slightly_formal}.

By the arbitrary choice of $f\in C^\infty_c(\Sigma_2)$, \eqref{eq:slightly_formal} yields that $\p_\nu w + qw = 0$ in $\Sigma_2$, which in turn by the defining property of $w$ gives $\p_\nu w|_{\Sigma_2}=w|_{\Sigma_2}=0$. Thus now the unique continuation property (see for instance \cite{ARV09}) implies that $w=0$ in $\Omega\setminus \overline{\Omega_1}$, and therefore 
\begin{equation}\label{eq:zero_on_boundary_nomag}
 w|_{\Sigma_1}=\nabla w|_{\Sigma_1}=0 \quad\mbox{and}\quad w|_{\p\Omega_1}=\nabla w|_{\p\Omega_1}=0\;.
\end{equation}
 
\noindent The first part of \eqref{eq:zero_on_boundary_nomag} implies $v_1=0$ by the definition of the associated dual problem \eqref{eq:12dual_prob}. In particular, $(v_1,\psi_1)_{L^2(\Sigma_1)}=0$ for all $\psi_1 \in L^2(\Sigma_1)$. If now $\psi_2 \in \tilde{S}_{V,q}$, denoting the $H^{-\frac{1}{2}}(\partial \Omega_1)$, $H^{\frac{1}{2}}(\partial \Omega_1)$ duality pairing by $\langle \cdot, \cdot \rangle_{\ast, \partial \Omega_1}$ and integrating by parts we get
\begin{align*}
(v_2,\psi_2)_{L^2(\Omega_1)} & = (-\D w + Vw, \psi_2)_{L^2(\Omega_1)} \\ 
    & =  (-\D \psi_2 + V\psi_2,w)_{L^2(\Omega_1)} + \langle \p_\nu \psi_2,w \rangle_{\ast, \partial \Omega_1} - \langle\psi_2,\p_\nu w\rangle_{\ast, \partial \Omega_1} \;,
\end{align*} 

\noindent which vanishes because of the second part of formula \eqref{eq:zero_on_boundary_nomag} and because $\psi_2\in \tilde{S}_{V,q}$ (which is also true in the weak form of the equation by definition). Hence, 
$$ (v,\psi)_{L^2(\Sigma_1)\times L^2(\Omega_1)} =0 \quad\mbox{for all} \quad \psi = (\psi_1,\psi_2) \in (L^2(\Sigma_1) \times \tilde{S}_{V,q})\;,$$
that is, $v \perp (L^2(\Sigma_1) \times \tilde{S}_{V,q})$ with respect to the $L^2(\Sigma_1)\times L^2(\Omega_1)$ scalar product as desired. 
\end{proof}

For our next step towards the solution of question (Q1), we shall consider a generalization of equation \eqref{eq:Schroedinger}, namely 
\begin{align}
\label{eq:Schroedinger_2}
\begin{split}
Lu := -\nabla\cdot (g \nabla  u) - i A_1 \cdot \nabla u - i \nabla \cdot (A_2 u) + V u & = 0  \mbox{ in } \Omega,\\
\nu\cdot(g \nabla u) + qu &= 0 \mbox{ on } \Sigma_1,\\
u & = f \mbox{ on } \Sigma_2,\\
u & = 0 \mbox{ on } \partial \Omega \setminus (\Sigma_1 \cup \Sigma_2),
\end{split}
\end{align}
\noindent where $g = (g_{ij})_{i,j=1,\dots,n}$ is a $C^2$  metric, i.e. a symmetric, positive definite, elliptic, $C^2$-regular matrix valued function on $\Omega$, and the magnetic potentials $A_1$ and $A_2$ do not necessarily coincide. 
We avoid discussing the well-posedness for this and refer to \cite{McLean} and \cite{GT} for a discussion of it. In the sequel, we will assume the well-posedness of this problem and its associated dual problem.

In connection to the problem \eqref{eq:Schroedinger_2} we define the sets
\begin{align*}
S_{g,A_1,A_2,V,q}&:= \{u \in L^2(\Omega): u \mbox{ is a weak solution to \eqref{eq:Schroedinger_2} in } \Omega \},\\
\tilde{S}_{g,A_1,A_2,V,q}&:= \{u \in H^1(\Omega_1): u \mbox{ is a weak solution to \eqref{eq:Schroedinger_2} in } \Omega_1 \} \subset L^2(\Omega_1).
\end{align*}

The next Lemma \ref{Lem:Runge_with_magnetic_potentials} shows that the result of Lemma \ref{lem:sim_dense} still holds for equation \eqref{eq:Schroedinger_2}, and the approximation can even be given in $H^1(\Omega_1)$ instead of $L^2(\Omega_1)$.

\begin{lem}\label{Lem:Runge_with_magnetic_potentials}
Assume that the set-up is as above. Then, the set
\begin{align*}
\mathcal{R}_{bb}:=\{(u|_{\Sigma_1}, u|_{\Omega_1})  : \ u|_{\Sigma_1} = P f|_{\Sigma_1} \mbox{ and } u|_{\Omega_1} = P f|_{\Omega_1} \mbox{ with } f\in C_c^{\infty}(\Sigma_2)\} \subset L^2(\Sigma_1) \times H^1(\Omega_1)
\end{align*}
is dense in $L^2(\Sigma_1) \times \tilde{S}_{g,A_1,A_2,V,q}$ equipped with the $L^2(\Sigma_1)\times H^1(\Omega_1)$ topology. Here $P$ denotes the Poisson operator which is defined in analogy to Definition \ref{defi:Poisson} and in particular maps data $f\in C_c^{\infty}(\Sigma_2)$ into the associated (weak) solution $u$ to the equation \eqref{eq:Schroedinger_2}.
\end{lem}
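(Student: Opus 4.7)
The plan is to adapt the Hahn--Banach strategy used to prove Lemma \ref{lem:sim_dense}, with two modifications: the $H^1(\Omega_1)$ topology forces an $H^{-1}$-type source in the adjoint problem, and we must write the magnetic adjoint of \eqref{eq:Schroedinger_2}. Since $\mathcal{R}_{bb}\subset L^2(\Sigma_1)\times H^1(\Omega_1)$, by Hahn--Banach and the Riesz representation theorem it suffices to show that any $(v_1,v_2)\in L^2(\Sigma_1)\times H^1(\Omega_1)$ satisfying
\begin{align*}
(v_1,u|_{\Sigma_1})_{L^2(\Sigma_1)}+(v_2,u|_{\Omega_1})_{H^1(\Omega_1)}=0 \quad \text{for every } u=Pf,\ f\in C^\infty_c(\Sigma_2),
\end{align*}
automatically annihilates $L^2(\Sigma_1)\times\tilde S_{g,A_1,A_2,V,q}$ in the same pairing.

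To exploit this, I would let $w\in H^1_{\partial\Omega\setminus\Sigma_1,0}(\Omega)$ solve the adjoint boundary value problem characterised weakly by
\begin{align*}
B^{*}(w,\varphi)=\int_{\Omega_1}\bigl(v_2\overline{\varphi}+\nabla v_2\cdot\overline{\nabla\varphi}\bigr)dx+(v_1,\varphi)_{L^2(\Sigma_1)}\quad \text{for all } \varphi\in H^1_{\partial\Omega\setminus\Sigma_1,0}(\Omega),
\end{align*}
where $B^{*}$ is the sesquilinear form dual to the one generated by $L$ (metric $g$ and potentials $V,q$ unchanged; magnetic terms obtained from $A_1,A_2$ by the standard adjoint conventions so that $B^{*}(w,\varphi)=\overline{B(\varphi,w)}$). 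The right-hand side is a bounded linear functional on $H^1_{\partial\Omega\setminus\Sigma_1,0}(\Omega)$, so solvability follows along the lines of Proposition \ref{prop:well-posed_12}, using the standing well-posedness assumption on \eqref{eq:Schroedinger_2}. Testing with $\varphi=u=Pf$ and using the orthogonality yields $B^{*}(w,u)=0$. Two successive complex Green's identities---first performed on the regularised domain $\Omega_\epsilon$, where elliptic regularity supplies enough smoothness away from $\overline{\Sigma_1}\cap\overline{\Sigma_2}$, and then passed to the limit $\epsilon\to 0$ exactly as in \eqref{eq:slightly_formal} by means of Lemma \ref{lem:normal_deriv}---convert $B^{*}(w,u)=0$ into a boundary pairing of the form $\langle \nu\cdot(g\nabla w),f\rangle_{\ast}=0$ on $\Sigma_2$. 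The bulk terms vanish because $Lu=0$, the $\Sigma_1$-contributions cancel by the defining identity for $w$, and the possible boundary magnetic terms drop by the standing condition $\nu\cdot A_j=0$ on $\partial\Omega$. The arbitrariness of $f\in C^\infty_c(\Sigma_2)$ then forces $\nu\cdot(g\nabla w)|_{\Sigma_2}=0$, and together with $w|_{\Sigma_2}=0$ (built into the space) one obtains $w|_{\Sigma_2}=\nabla w|_{\Sigma_2}=0$.

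Boundary unique continuation for the adjoint magnetic Schrödinger operator (applicable for $A_1,A_2\in C^1$, see e.g.\ \cite{ARV09}), together with the assumed connectedness of $\Omega\setminus\overline{\Omega_1}$, gives $w\equiv 0$ on $\Omega\setminus\overline{\Omega_1}$. Hence $w|_{\Sigma_1}=\nabla w|_{\Sigma_1}=0$ and $w|_{\partial\Omega_1}=\nabla w|_{\partial\Omega_1}=0$. The first identity inserted into the weak Robin-type condition on $\Sigma_1$ immediately forces $v_1=0$. For the interior orthogonality, pick $\psi_2\in\tilde S_{g,A_1,A_2,V,q}$, extend it to some $\tilde\psi_2\in H^1(\Omega)$, and multiply by a cutoff $\chi\in C^\infty_c(\Omega)$ equal to $1$ on $\overline{\Omega_1}$, so that $\chi\tilde\psi_2\in H^1_0(\Omega)\subset H^1_{\partial\Omega\setminus\Sigma_1,0}(\Omega)$ agrees with $\psi_2$ on $\Omega_1$. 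Plugging $\varphi=\chi\tilde\psi_2$ into the defining identity of $w$ the right-hand side becomes $(v_2,\psi_2)_{H^1(\Omega_1)}$, while the left-hand side $B^{*}(w,\chi\tilde\psi_2)=\overline{B(\chi\tilde\psi_2,w)}$ collapses, via $w\equiv 0$ outside $\overline{\Omega_1}$, to $\overline{B_{\Omega_1}(\psi_2,w|_{\Omega_1})}$; the latter vanishes because $w|_{\Omega_1}\in H^1_0(\Omega_1)$ is an admissible test function for the equation $L\psi_2=0$ in $\Omega_1$. Hence $(v_2,\psi_2)_{H^1(\Omega_1)}=0$, which closes the argument.

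The principal technical difficulties I expect are: (i) the well-posedness of the adjoint problem with its $H^{-1}$-type source, a careful but straightforward variant of Proposition \ref{prop:well-posed_12}; (ii) justifying the Green's identities near the kink $\overline{\Sigma_1}\cap\overline{\Sigma_2}$, for which the exhaustion by $\Omega_\epsilon$ already employed in the proof of Lemma \ref{lem:sim_dense} is the right tool; and (iii) the invocation of boundary unique continuation for the magnetic Schrödinger operator, which is the reason to work with $C^1$ magnetic potentials. The upgrade from $L^2(\Omega_1)$ to $H^1(\Omega_1)$ is absorbed entirely in the functional-analytic set-up of the dual problem and does not affect the UCP or the boundary analysis.
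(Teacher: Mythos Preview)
Your approach is essentially the same as the paper's: Hahn--Banach, an adjoint mixed boundary value problem with an $(H^1(\Omega_1))^*$-type source, integration by parts to extract the conormal derivative on $\Sigma_2$, unique continuation in $\Omega\setminus\overline{\Omega_1}$, and then the interior orthogonality. Two technical points deserve attention, though neither is fatal.

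First, the step ``testing with $\varphi=u=Pf$'' is not directly licit: the weak adjoint identity $B^*(w,\varphi)=\text{RHS}(\varphi)$ is only postulated for $\varphi\in H^1_{\partial\Omega\setminus\Sigma_1,0}(\Omega)$, and $u=Pf$ does \emph{not} lie in this space (it has $u|_{\Sigma_2}=f\neq 0$). So you cannot conclude $B^*(w,u)=0$ from the weak formulation alone. The paper handles this by writing the adjoint problem in strong form, substituting it into the orthogonality identity $(v_1,u|_{\Sigma_1})+(v_2,u|_{\Omega_1})_{H^1(\Omega_1)}=0$, and then integrating $(L^*w,u)_{L^2(\Omega)}$ by parts via an explicit Green formula relating $L$ and $L^*$; this is what produces the extra boundary pairing on $\Sigma_2$. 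Your sketch implicitly assumes the weak identity extends to all of $H^1(\Omega)$, which would require exactly this integration-by-parts step to justify. The $\Omega_\epsilon$-exhaustion you mention is the right tool, but the logical order must be: orthogonality $\Rightarrow$ strong adjoint equation $\Rightarrow$ Green identity $\Rightarrow$ boundary term on $\Sigma_2$, not: weak identity with $\varphi=u$ $\Rightarrow$ $B^*(w,u)=0$.

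Second, the paper does \emph{not} assume $\nu\cdot A_j=0$ on $\partial\Omega$ for the general operator \eqref{eq:Schroedinger_2}; it keeps the magnetic boundary terms and writes the adjoint Robin condition as $\nu\cdot(g\nabla w)+(q-i\nu\cdot A_1-i\nu\cdot A_2)w=v_1$ on $\Sigma_1$. Your invocation of $\nu\cdot A_j=0$ is not part of the set-up here (though it holds in the application to Theorem~\ref{thm:Q11/2}, where the $A_j$ are compactly supported in $\Omega_1$). Tracking these terms changes nothing structurally: they appear in the Robin data on $\Sigma_1$ and in the conormal quantity on $\Sigma_2$, and vanish once $w\equiv 0$ near $\partial\Omega$.

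Your final step, using a cutoff $\chi\equiv 1$ on $\overline{\Omega_1}$ and the fact that $w|_{\Omega_1}\in H^1_0(\Omega_1)$ is admissible in the weak equation $L\psi_2=0$, is a clean alternative to the paper's extension-plus-integration-by-parts on $\Omega_1$; both work.
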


\begin{proof}
We use the same strategy as in the proof of the previous Lemma. Let $(v_1, v_2^*) \in L^2(\Sigma_1)\times (H^1(\Omega_1))^*$, and consider the unique Riesz representative $v_2 \in H^1(\Omega_1)$ of the functional $v_2^*\in (H^1(\Omega_1))^*$. By the Hahn-Banach theorem, it suffices to prove that if $v=(v_1, v_2) \in L^2(\Sigma_1)\times H^1(\Omega_1)$ satisfies $v \perp \mathcal{R}_{bb}$ with respect to the scalar product in $L^2(\Sigma_1) \times H^1(\Omega_1)$, then we have
\begin{align*}
v \perp (L^2(\Sigma_1) \times \tilde{S}_{g,A_1,A_2,V,q}).
\end{align*}

To this end, let $f \in C^\infty_c(\Sigma_2)$ and define $u:= P f$. Moreover, let $w$ be a solution to the associated adjoint problem
\begin{align}
\label{eq:dualH1_approx}
\begin{split}
L^* w & = \tilde{v}_2^* \mbox{ in } \Omega,\\
\nu\cdot(g \nabla w) + (q-i\nu\cdot A_1 -i \nu\cdot A_2)w & = v_1 \mbox{ on } \Sigma_1,\\
w & = 0 \mbox{ on } \partial \Omega \setminus \Sigma_1,
\end{split}
\end{align}
\noindent where $L^* := -\nabla\cdot (g \nabla ) + i A_2 \cdot \nabla + i \nabla \cdot A_1 + V$ and $\tilde{v}_2^*(\cdot) := v_2^*(\cdot|_{\Omega_1})$. First we observe that $\tilde{v}_2^* \in (H^1_b(\Omega))^{\ast} := (H^1_{\partial \Omega \setminus \Sigma_1,0}(\Omega) + \tilde{H}^{\frac{1}{2}}(\Sigma_2\cup \Sigma_1))^*$.
\noindent The associated bound is easily proved, since for $u \in H^1_b(\Omega)$
$$ |\tilde{v}_2^*(u)| = |{v_2}^*(u|_{\Omega_1})| \leq \|{v_2}^*\|\,\|u|_{\Omega_1}\|_{H^1(\Omega_1)} \leq \|v_2^*\|\,\|u\|_{H^1(\Omega)}\;.$$ 
\noindent  Now we have
$$ (v_2,u|_{\Omega_1})_{H^1(\Omega_1)} =  v_2^*(u|_{\Omega_1}) =  \tilde{v}_2^*(u)\;, $$

\noindent which by the assumption $v \perp \mathcal{R}_{bb}$ leads to
\begin{align}
\label{eq:runge_mag_step1}
\begin{split}
0 & = (v_1,u|_{\Sigma_1})_{L^2(\Sigma_1)}+ (v_2,u|_{\Omega_1})_{H^1(\Omega_1)} \\ 
   & = \langle \nu\cdot(g \nabla w) + (q-i\nu\cdot A_1 -i \nu\cdot A_2)w,u|_{\Sigma_1}\rangle_{\ast} + \tilde{v}_2^*(u) \\
   & = \langle \nu\cdot(g \nabla w) + (q-i\nu\cdot A_1 -i \nu\cdot A_2)w,u-f \rangle_{\ast}+ (L^*w,u)_{L^2(\Omega)}\;.  
\end{split}
\end{align}
As in the previous proof, $\langle \cdot, \cdot \rangle_{\ast}$ denotes the $H^{\frac{1}{2}}(\partial \Omega)$, $H^{-\frac{1}{2}}(\partial \Omega)$ duality pairing.

Integrating by parts twice (which can be justified in the same way as in the previous section), we obtain the following formula linking the operators $L$ and $L^*$:
\begin{align}\label{eq:int_by_parts}
\begin{split}
(Lu,w)_{\Omega}-(u,L^*w)_{\Omega} & 
= -(\nabla\cdot(g\nabla u),w)_{\Omega} -i ( A_1\cdot \nabla u,w)_{\Omega} -i ( \nabla\cdot(A_2 u),w)_{\Omega} \\ 
&\quad + (\nabla\cdot(g\nabla w),u)_{\Omega} -i ( A_2\cdot \nabla w,u)_{\Omega} -i (\nabla\cdot(A_1 w),u)_{\Omega} \\ 
& = -\langle \nu\cdot(g\nabla u),w \rangle_{\ast} -i \langle \nu\cdot(A_1 + A_2)u,w \rangle_{\ast} + \langle u,\nu\cdot(g\nabla w)\rangle_{\ast}\;.
\end{split}
\end{align}

\noindent Here we have used $(\cdot,\cdot)_\Omega$ as a shorthand notation for $(\cdot,\cdot)_{L^2(\Omega)}$. Combining formulas \eqref{eq:runge_mag_step1} and \eqref{eq:int_by_parts}, we infer
$$ \langle \nu\cdot(g\nabla w) + (q-i\nu\cdot A_1 -i \nu\cdot A_2)w,f \rangle_{\ast} =0\;, $$
\noindent which by the arbitrary choice of $f\in C^\infty_c(\Sigma_2)$ gives
$$ \nu\cdot(g\nabla w) + (q-i\nu\cdot A_1 -i \nu\cdot A_2)w = 0 \quad \mbox{on} \;\;\Sigma_2\;. $$

Thus, by definition of the adjoint equation \eqref{eq:dualH1_approx}, we are left with
\begin{align*}
L^*w & =0 \mbox{ in } \Omega\setminus \overline{\Omega_1},\\
\nu\cdot(g\nabla w) & =0\mbox{ on } \Sigma_2,\;\;\;\;\;\;\;\;\;\\
w&=0 \mbox{ on } \Sigma_2,\;
\end{align*}

\noindent and now the UCP (see for instance \cite{ARV09}) leads to $w=0$ in $\Omega\setminus \overline{\Omega_1}$. As a consequence of this fact, we obtain $w|_{\p(\Omega\setminus \overline{\Omega_1})}=0$ and $\nabla w|_{\p(\Omega\setminus \overline{\Omega_1})}=0$, which in particular implies
\begin{equation}\label{eq:zero_on_boundary}
 w|_{\Sigma_1}=\nabla w|_{\Sigma_1}=0 \quad\mbox{and}\quad w|_{\p\Omega_1}=\nabla w|_{\p\Omega_1}=0\;.
\end{equation}
 
\noindent The first part of \eqref{eq:zero_on_boundary} implies $v_1=0$ by the associated dual problem \eqref{eq:dualH1_approx}. In particular, $\langle v_1,\psi_1 \rangle_{\ast} =0$ for all $\psi_1 \in L^2(\Sigma_1)$.
\vspace{2mm}

Let now $\psi_2 \in \tilde{S}_{g,A_1,A_2,V,q}$. If $E: H^1(\Omega_1)\rightarrow H^1(\Omega)$ is any extension operator, we have
\begin{align*}
(v_2,\psi_2)_{H^1(\Omega_1)} & = (v_2,(E\psi_2)|_{\Omega_1})_{H^1(\Omega_1)}  = v_2^*((E\psi_2)|_{\Omega_1}) \\ & = \tilde{v}_2^*(E\psi_2) = (L^*w, E\psi_2)_{L^2(\Omega)} =  (L^*w, \psi_2)_{L^2(\Omega_1)} \;.
\end{align*}
\noindent Using the integration by parts formula \eqref{eq:int_by_parts} with $\Omega_1$ instead of $\Omega$, we infer
\begin{align*}
(v_2,\psi_2)_{H^1(\Omega_1)} & = (L^*w, \psi_2)_{L^2(\Omega_1)} \\ 
    & =  (L\psi_2,w)_{\Omega_1} +\langle \nu\cdot(g \nabla \psi_2),w \rangle_{\ast, \partial \Omega_1} \\ & \quad +i \langle \nu\cdot(A_1 + A_2)\psi_2,w \rangle_{\ast, \partial \Omega_1} - \langle \psi_2,\nu\cdot(g\nabla w) \rangle_{\ast, \partial \Omega_1} \;.
\end{align*} 
Here we have denoted the $H^{-\frac{1}{2}}(\partial \Omega_1)$, $H^{\frac{1}{2}}(\partial \Omega_1)$ duality pairing by $\langle \cdot, \cdot \rangle_{\ast, \partial \Omega_1}$.

The right hand side of the above equation vanishes because of the second part of formula \eqref{eq:zero_on_boundary} and because $\psi_2\in \tilde{S}_{g,A_1,A_2,V,q}$. Thus we have obtained that 
$$ (v,\psi)_{L^2(\Sigma_1)\times H^1(\Omega_1)} =0 \quad\mbox{for all} \quad \psi = (\psi_1,\psi_2) \in (L^2(\Sigma_1) \times \tilde{S}_{g,A_1,A_2,V,q})\;,$$
that is, $v \perp (L^2(\Sigma_1) \times \tilde{S}_{g,A_1,A_2,V,q})$. 
\end{proof}

The desired uniqueness result of Theorem \ref{thm:Q11/2}
now follows from Alessandrini's identity.

\begin{proof}[Proof of Theorem \ref{thm:Q11/2}]
Using the assumption that the DN maps coincide and Lemma \ref{lem:Aless} with $s=1/2$, we see that
\begin{align}\label{Q1_1}
\begin{split}
0
&=\langle(\Lambda_1 - \Lambda_2)f_1, f_2 \rangle_{\ast} \\
&= \int\limits_{\Omega_1}(U_1 - U_2) u_1 \overline{u_2} dx + i \int\limits_{\Omega_1} (A_1 - A_2) \cdot (u_1 \overline{\nabla u_2} - u_2 \cdot \overline{\nabla  u_1})   dx + \int\limits_{\Sigma_1}(q_1- q_2) u_1 \overline{u_2} d \mathcal{H}^{n-1}
\end{split}
\end{align}
\noindent holds for every $f_1, f_2 \in C^\infty_c(\Sigma_2)$, where $u_1, u_2$ are the solutions of \eqref{eq:Schroedinger} associated with the respective boundary data and potentials. For the sake of simplicity, here we set $U_j := V_j+|A_j|^2$ and $\Lambda_j := \Lambda_{A_j,V_j,q_j}$.

 Let $\phi_j \in L^2(\Sigma_1)$ and $\psi_j \in \tilde{S}_{Id,A_j,A_j,V_j,q_j}$ for $j=1,2$. By Lemma \ref{Lem:Runge_with_magnetic_potentials}, for every $k\in\mathbb N$ we can find $f_1^{(k)},f_2^{(k)} \in C^\infty_c(\Sigma_2)$ such that $$\|\phi_j-u_j^{(k)}|_{\Sigma_1}\|_{L^2(\Sigma_1)} < k^{-1} \quad\mbox{ and }\quad \|\psi_j-u_j^{(k)}|_{\Omega_1}\|_{H^1(\Omega_1)} < k^{-1} \;, \quad\quad j=1,2\;,$$
\noindent where $u_j^{(k)}$ solves  \eqref{eq:Schroedinger} with boundary value $f_j^{(k)}$ and potentials $A_j, V_j, q_j$. We now substitute these solutions $u_1^{(k)}$, $u_2^{(k)}$ into formula \eqref{Q1_1} and send $k\rightarrow \infty$. Given the approximations above, by Cauchy-Schwarz the limits can be moved inside the integrals. In fact, 
\begin{align*}
\int_{\Sigma_1} & |(q_1-q_2)u_{1}^{(k)}\overline{u_{2}^{(k)}}| d \mathcal{H}^{n-1} \leq \|q_1-q_2\|_{L^\infty(\Sigma_1)}\|u_{1}^{(k)}|_{\Sigma_1}\|_{L^2(\Sigma_1)}\|u_{2}^{(k)}|_{\Sigma_1}\|_{L^2(\Sigma_1)} \\ & \leq c ( \|\phi_1-u_{1}^{(k)}|_{\Sigma_1}\|_{L^2(\Sigma_1)} + \|\phi_1\|_{L^2(\Sigma_1)} )(\|\phi_2-u_{2}^{(k)}|_{\Sigma_1}\|_{L^2(\Sigma_1)} + \|\phi_2\|_{L^2(\Sigma_1)}) \\ & \leq c ( 1 + \|\phi_1\|_{L^2(\Sigma_1)} )(1 + \|\phi_2\|_{L^2(\Sigma_1)})  < c\;,
\end{align*} \begin{align*}
\int_{\Omega_1} & |u_{1}^{(k)}(A_1-A_2)\cdot\overline{\nabla u_{2}^{(k)}}| dx  \leq \|A_1-A_2\|_{L^\infty(\Omega_1)}\|u_{1}^{(k)}|_{\Omega_1}\|_{L^2(\Omega_1)}\|\nabla u_{2}^{(k)}|_{\Omega_1}\|_{L^2(\Omega_1)} \\ & \leq c ( \|\psi_1-u_{1}^{(k)}|_{\Omega_1}\|_{L^2(\Omega_1)} + \|\psi_1\|_{L^2(\Omega_1)} )(\|\nabla \psi_2-\nabla u_{2}^{(k)}|_{\Omega_1}\|_{L^2(\Omega_1)} + \|\nabla \psi_2\|_{L^2(\Omega_1)}) \\ & \leq c ( \|\psi_1-u_{1}^{(k)}|_{\Omega_1}\|_{H^1(\Omega_1)} + \|\psi_1\|_{H^1(\Omega_1)} )(\|\psi_2-u_{2}^{(k)}|_{\Omega_1}\|_{H^1(\Omega_1)} + \|\psi_2\|_{H^1(\Omega_1)}) \\ & \leq c ( 1 + \|\psi_1\|_{H^1(\Omega_1)} )(1 + \|\psi_2\|_{H^1(\Omega_1)})  < c\;,
\end{align*}
\noindent and similarly for the other terms. Eventually, we have proved that the following formula holds for every $\phi_j \in L^2(\Sigma_1)$ and $\psi_j \in \tilde{S}_{Id,A_j,A_j,V_j,q_j}$ for $j=1,2$:
\begin{align}\label{Q1_2}\begin{split}
\int_{\Omega_1} (A_1-A_2)\cdot(\psi_2\overline{\nabla \psi_1} - \psi_1\overline{\nabla \psi_2}) \; dx + \int_{\Omega_1} (U_1-U_2)\psi_1 \overline{\psi_2} dx +\int_{\Sigma_1} (q_1-q_2)\phi_1 \overline{\phi_2} d \mathcal{H}^{n-1} &=0\;.
\end{split}
\end{align}
If we substitute $\psi_1 = \psi_2 = 0$ and $\phi_2 = 1$ into \eqref{Q1_2}, we are left with only $$\int_{\Sigma_1} (q_1-q_2)\phi_1 d \mathcal{H}^{n-1} =0\;,$$ \noindent which by the arbitrary choice of $\phi_1 \in L^2(\Sigma_1)$ implies $q_1 = q_2$ in $\Sigma_1$. In light of this, formula \eqref{Q1_2} is reduced to \begin{equation}\label{Alex_final} \int_{\Omega_1} (A_1-A_2)\cdot(\psi_2\overline{\nabla \psi_1} - \psi_1\overline{\nabla \psi_2}) \; dx + \int_{\Omega_1} (U_1-U_2)\psi_1 \overline{\psi_2} dx=0\;. \end{equation}
\noindent The problem of deducing information about the magnetic and electric potentials from the above equation has been studied e.g. in \cite{NSU95, T98, DDSKSU07}, see also the survey \cite{S06}. In all these uniqueness results the key step consists in the construction of suitable complex geometrical optics solutions of the form
$$ u(x) = e^{\frac{\phi+i\psi}{h}}(a(x) + hr(x,h))\;, $$
\noindent for appropriate phase functions $\phi, \psi$, amplitudes $a$ and decaying errors $r$. Substituting such a special solution into equation \eqref{Alex_final} and using our Runge approximation results from Lemma \ref{Lem:Runge_with_magnetic_potentials} allows us to deduce that $V_1 = V_2$ and $dA_1 = dA_2$ as in the cited references, which concludes the proof of Theorem \ref{thm:Q11/2}.
\end{proof}

\section{Simultaneous Runge Approximation $s\in (0,1)$}
\label{sec:Runges}

Similarly as in deriving the results in the previous section, we can also deduce simultaneous Runge approximation results for the ``Caffarelli-Silvestre extension'' for general $s\in (0,1)$.

In analogy to the setting in the previous section we thus set
\begin{align*}
S_{s,A,V,q} &:= \{u \in L^2(\Omega, d^{1-2s}): \ u \mbox{ is a weak solution to } \eqref{eq:frac_Schr} \mbox{ in } \Omega \},\\
\tilde{S}_{s,A,V,q} &:= \{u \in H^1(\Omega, d^{1-2s}): \ u \mbox{ is a weak solution to } \eqref{eq:frac_Schr} \mbox{ in } \Omega_1 \} \subset L^2(\Omega_1, d^{1-2s}).
\end{align*}

In order to illustrate these ideas we only discuss the $L^2(\Sigma_1) \times L^2(\Omega_1, d^{1-2s})$ approximation result in the case that $A=0$.

\begin{prop}
\label{prop:s_Runge}
Assume that $A=0$ and that $V,q$ satisfy the conditions from \eqref{eq:A1A2} and \eqref{eq:bds} and let $P_{s}$ be the associated Poisson operator. Then the set
\begin{align*}
\mathcal{R}_{bbs}:=\{(u|_{\Sigma_1}, u|_{\Omega_1}): u|_{\Sigma_1} = P_{s} f|_{\Sigma_1} \mbox{ and } u|_{\Omega_1} = P_s f|_{\Omega_1} \mbox{ with } f \in C_c^{\infty}(\Sigma_2)\} \subset L^2(\Sigma_1) \times S_{s,0,V,q}
\end{align*}
is dense in $L^2(\Sigma_1)\times \tilde{S}_{s,0,V,q}$ equipped with the $L^2(\Sigma_1)\times L^2(\Omega_1, d^{1-2s})$ topology. The operator $P_s$ denotes the Poisson operator from Definition \ref{defi:Poissons}.
\end{prop}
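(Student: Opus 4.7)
The plan is to follow the strategy of Lemma \ref{lem:sim_dense} very closely, replacing the unweighted ingredients by their weighted analogues from Section \ref{sec:well_posed1}. By the Hahn-Banach theorem it suffices to show that any $v=(v_1,v_2) \in L^2(\Sigma_1) \times L^2(\Omega_1,d^{1-2s})$ with $v \perp \mathcal{R}_{bbs}$ (in the $L^2(\Sigma_1) \times L^2(\Omega_1,d^{1-2s})$ pairing) automatically satisfies $v \perp (L^2(\Sigma_1) \times \tilde{S}_{s,0,V,q})$. To this end I would set up the weighted adjoint problem
\begin{align*}
-\nabla \cdot d^{1-2s} \nabla w + d^{1-2s} V w & = d^{1-2s} v_2 \chi_{\Omega_1} \mbox{ in } \Omega,\\
\lim_{d\to 0} d^{1-2s} \p_\nu w + q w & = v_1 \mbox{ on } \Sigma_1,\\
w & = 0 \mbox{ on } \partial \Omega \setminus \Sigma_1,
\end{align*}
whose well-posedness follows from Proposition \ref{prop:well-posed_s} (applied with $A=0$; note that the right-hand side lies in the dual space as required, and $v_1 \in L^2(\Sigma_1)\hookrightarrow H^{-s}(\Sigma_1)$).

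Next, for arbitrary $f \in C_c^{\infty}(\Sigma_2)$ and $u:= P_s f$, I would combine the orthogonality assumption with the bilinear form $B_{s,0,V,q}$ to derive
\begin{align*}
0 &= (v_1,u|_{\Sigma_1})_{L^2(\Sigma_1)} + (v_2,u|_{\Omega_1})_{L^2(\Omega_1,d^{1-2s})} \\
  &= \langle \lim_{d\to 0} d^{1-2s} \p_\nu w + q w, u-f\rangle_{\ast_s} + B_{s,0,V,q}(w,u) - B_{s,0,V,q}(w,u) \\
  &= -\langle \lim_{d\to 0} d^{1-2s} \p_\nu w + q w, f\rangle_{\ast_s},
\end{align*}
where the symmetry of $B_{s,0,V,q}$ is used together with the fact that $u$ is a weak solution of \eqref{eq:frac_Schr} with data $f$ on $\Sigma_2$. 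As in the proof of Lemma \ref{lem:sim_dense}, the (formally boundary-level) manipulations must be justified by first working on a family of strictly interior subdomains $\Omega_{\epsilon}$ and then passing to the limit $\epsilon \to 0$; here the trace statement \eqref{eq:normalHsconv} from Lemma \ref{lem:normal_deriv} provides the crucial convergence of the weighted normal derivative in $H^{-s}$.

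Since $f\in C_c^{\infty}(\Sigma_2)$ is arbitrary, I would deduce $\lim_{d\to 0} d^{1-2s}\p_\nu w + qw = 0$ on $\Sigma_2$, hence $\lim_{d\to 0} d^{1-2s}\p_\nu w|_{\Sigma_2} = 0$ (because $w|_{\Sigma_2}=0$ by construction). The key step is then to invoke a weighted unique continuation principle for the degenerate Caffarelli-Silvestre type operator in the Muckenhoupt weight class $A_2$: the vanishing of both weighted Cauchy data on the relatively open set $\Sigma_2$ together with $w$ solving the homogeneous equation in $\Omega \setminus \overline{\Omega_1}$ forces $w \equiv 0$ in that connected set (this is the natural counterpart of the unique continuation from \cite{ARV09} used in the $s=\tfrac12$ case; I expect this to be the main obstacle, but it follows from known UCP results for degenerate elliptic equations with $A_2$ weights and for Caffarelli-Silvestre extensions). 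Consequently the Cauchy data of $w$ vanish both on $\Sigma_1$ and on $\partial \Omega_1$.

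Finally, using $\lim_{d\to 0} d^{1-2s}\p_\nu w|_{\Sigma_1} + q w|_{\Sigma_1} = 0$ together with the adjoint boundary condition immediately gives $v_1 = 0$, so $(v_1,\psi_1)_{L^2(\Sigma_1)} = 0$ for all $\psi_1 \in L^2(\Sigma_1)$. For the bulk component, given any $\psi_2 \in \tilde{S}_{s,0,V,q}$, integration by parts on $\Omega_1$ with respect to the weighted bilinear form yields
\begin{align*}
(v_2,\psi_2)_{L^2(\Omega_1,d^{1-2s})}
&= B_{s,0,V,q}^{\Omega_1}(w,\psi_2) - \langle d^{1-2s}\p_\nu w, \psi_2 \rangle_{\partial \Omega_1} + \langle w, d^{1-2s}\p_\nu \psi_2 \rangle_{\partial \Omega_1} = 0,
\end{align*}
since all boundary terms on $\partial \Omega_1$ vanish by the Cauchy data of $w$ and since $\psi_2$ solves the homogeneous equation in $\Omega_1$. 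Hence $v \perp (L^2(\Sigma_1) \times \tilde{S}_{s,0,V,q})$, completing the proof. The main new technical point compared with Lemma \ref{lem:sim_dense} is thus the use of Lemma \ref{lem:normal_deriv} to make sense of the weighted normal derivative boundary pairings, together with the $A_2$-weighted unique continuation for the degenerate extension operator.
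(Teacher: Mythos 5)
Your proposal follows essentially the same route as the paper's proof: a Hahn–Banach orthogonality argument, the weighted adjoint problem (with the bulk right-hand side merely rewritten using the weighted rather than unweighted $L^2$-pairing, which is equivalent), justification of the boundary integration by parts via interior domains $\Omega_\epsilon$ and Lemma \ref{lem:normal_deriv}, and the weighted unique continuation principle from $\Sigma_2$ for the degenerate operator (the paper cites \cite{Rue15,FF14,Yu16} after flattening the boundary). The only cosmetic slips are the uninformative ``$+B_{s,0,V,q}(w,u)-B_{s,0,V,q}(w,u)$'' display (the actual double integration by parts should appear there) and the parenthetical justification that $\lim_{d\to 0}d^{1-2s}\p_\nu w|_{\Sigma_2}=0$ ``because $w|_{\Sigma_2}=0$'' -- it actually follows because the $qw$ term is supported on $\Sigma_1$, disjoint from $\Sigma_2$, so the vanishing of the pairing for all $f\in C_c^\infty(\Sigma_2)$ directly forces the weighted normal derivative to vanish there.
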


\begin{proof}
\emph{Step 1: Set-up.}
The argument is similar as in the one for Lemma \ref{lem:sim_dense}. To this end, we first note that with respect to the $L^2(\Omega)$ scalar product, we have that $ (L^2(\Omega, d^{1-2s}))^{\ast} \sim L^2(\Omega, d^{2s-1})$. As a consequence, as above, we seek to prove that if $(v_1, v_2) \in L^2(\Sigma_1) \times L^2(\Omega, d^{2s-1}) $  satisfies $(v_1, v_2)  \perp (u|_{\Sigma_1}, u|_{\Omega})$ with $u = P_{s} (f)$ with $f\in C_c^{\infty}(\Sigma_2)$ (with orthogonality with respect to the $L^2(\Sigma_1)\times L^2(\Omega_1)$ scalar product), then also $(v_1, v_2) \perp L^2(\Sigma_1) \times \tilde{S}_{s,0,V,q}$ holds.
To this end, we consider weak solutions to the adjoint problem
\begin{align}
\label{eq:dual_s}
\begin{split}
- \nabla \cdot d^{1-2s} \nabla w + d^{1-2s} V w & = v_2 \chi_{\Omega_1} \mbox{ in } \Omega,\\
\lim\limits_{d(x) \rightarrow 0} d^{1-2s}\p_{\nu} w + q w & = -v_1 \mbox{ on } \Sigma_1,\\
w & = 0 \mbox{ on } \partial \Omega \setminus \Sigma_1.
\end{split}
\end{align}
Let us thus assume that $(v_1, v_2)\in L^2(\Sigma_1) \times L^2(\Omega, d^{2s-1})$ are such that for all $u = P_{s}(f)$ with $f\in C^{\infty}_c(\Sigma_1)$ we have
\begin{align}
\label{eq:orthog_s}
0 = (v_1, u)_{L^2(\Sigma_2)} + (v_2, u)_{L^2(\Omega)} .
\end{align}
We remark that due to the assumptions that $v_2 \in L^2(\Omega, d^{2s-1})$ and $u \in L^2(\Omega, d^{1-2s})$ the bulk $L^2(\Omega)$ scalar product is well-defined. 

\medskip

\emph{Step 2: Orthogonality.} We argue on the level of the strong equation. This can be justified as in the proof of Lemma \ref{lem:sim_dense} using the boundedness and convergence results from Lemma \ref{lem:normal_deriv}.
Beginning with the bulk contribution and using the dual equation, we then obtain
\begin{align*}
(u,v_2)_{L^2(\Omega_1)} 
& = (u, -\nabla \cdot d^{1-2s} \nabla w + d^{1-2s} V w)_{L^2(\Omega)}\\
& = \langle u, \lim\limits_{d \rightarrow 0} d^{1-2s} \p_{\nu} w \rangle_{\ast,s} - \langle \lim\limits_{d \rightarrow 0} d^{1-2s}\p_{\nu} u, w \rangle_{\ast,s} \\
& = (u, qw)_{L^2(\Sigma_1)} - (u, v_1)_{L^2(\Sigma_1)} - ( q u, w)_{L^2(\Sigma_1)} + \langle f, \lim\limits_{d \rightarrow 0} d^{1-2s} \p_{\nu} w \rangle_{\ast,s}\\
& = -(u,v_1)_{L^2(\Sigma_1)} + \langle f, \lim\limits_{d \rightarrow 0} d^{1-2s} \p_{\nu} w \rangle_{\ast,s} ,
\end{align*}
where $u = P_{s}(f)$ and $f\in C_c^{\infty}(\Sigma_2)$ and $\langle \cdot, \cdot \rangle_{\ast,s}$ denotes the $H^{-s}(\partial \Omega)$, $H^{s}(\partial \Omega)$ duality pairing.
Combining this with \eqref{eq:orthog_s}, we obtain that 
\begin{align*}
0= \langle f, \lim\limits_{d \rightarrow 0} d^{1-2s} \p_{\nu} w \rangle_{\ast,s} \mbox{ for all } f \in C_c^{\infty}(\Sigma_2).
\end{align*}
Hence, $\lim\limits_{d \rightarrow 0} d^{1-2s} \p_{\nu} w = 0$ on $\Sigma_2$. Since moreover also $w|_{\Sigma_2} =0$, boundary unique continuation for the fractional Schrödinger equation \eqref{eq:dual_s} implies that $w \equiv 0$ in $\Omega \setminus \Omega_1$. Indeed, it is possible to flatten the boundary $\partial \Omega$ by a suitable diffeomorphism and then invoke the unique continuation results from for instance \cite{Rue15, FF14} or \cite{Yu16}.
Now, by definition of $w$ (see \eqref{eq:dual_s}), this however implies that $v_1 \equiv 0$. 

Further, for $h \in \tilde{S}_{s,0,V,q}$, by the vanishing of $w|_{\partial \Omega_1}$ and $\lim\limits_{d \rightarrow 0} d^{1-2s} \p_{\nu} w|_{\partial \Omega_1}$, we infer that
\begin{align*}
(h,v_2)_{L^2(\Omega_1)} &= (h, -\nabla \cdot d^{1-2s} \nabla w + V d^{1-2s} w)_{L^2(\Omega_1)}\\
&= (-\nabla \cdot d^{1-2s} \nabla h + d^{1-2s} V h, w)_{L^2(\Omega_1)} =0.
\end{align*}
Here the last equality follows from the fact that $h\in \tilde{S}_{s,0,V,q}$. Thus, in particular, $v_2 \perp  \tilde{S}_{s,0,V,q}$, which concludes the argument.
\end{proof}

Using the simultaneous bulk and boundary approximation result of Proposition \ref{prop:s_Runge}, it is possible to recover $V$ and $q$ simultaneously also in this weighted setting:

\begin{thm}
\label{thm:Q1s}
Let $\Omega \subset \mathbb R^n$, $n\geq 3$, be an open, bounded and $C^3$-regular domain. Suppose that $d\in C^2(\Omega)$. Assume $\Omega_1\Subset \Omega$ is an open, bounded set with $\Omega \setminus \Omega_1$ simply connected and that $\Sigma_1, \Sigma_2 \subset \p\Omega$ are two disjoint, relatively open sets. If the potentials $q_1, q_2 \in L^\infty(\Sigma_1)$ and $V_1, V_2 \in C^\infty_c(\Omega_1)$ in the equation \eqref{eq:frac_Schr} are such that $$\Lambda_{s,1}:= \Lambda_{s,0, V_1, q_1}=\Lambda_{s,0,V_2,q_2}=:\Lambda_{s,2}\;,$$ 
\noindent then $q_1=q_2$ and $\;V_1 = V_2$.
\end{thm}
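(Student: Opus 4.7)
The proof will follow the blueprint of Theorem \ref{thm:Q11/2} but in the weighted setting, combining Lemma \ref{lem:Aless}, Proposition \ref{prop:s_Runge} and a reduction to the classical Calder\'on problem inside $\Omega_1$. First I would apply Lemma \ref{lem:Aless} with $A_1=A_2=0$ and use $\Lambda_{s,1}=\Lambda_{s,2}$ together with $\supp(V_j)\subset \Omega_1$ to obtain
\[
0 = \int_{\Omega_1}(V_1-V_2)\,u_1\overline{u_2}\,d^{1-2s}\,dx + \int_{\Sigma_1}(q_1-q_2)\,u_1\overline{u_2}\,d\mathcal H^{n-1}
\]
for all Poisson solutions $u_j=P_s f_j$ with $f_j\in C_c^\infty(\Sigma_2)$. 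Proposition \ref{prop:s_Runge} then lets me approximate any pair $(\phi_j,\psi_j)\in L^2(\Sigma_1)\times \tilde S_{s,0,V_j,q_j}$ in the $L^2(\Sigma_1)\times L^2(\Omega_1,d^{1-2s})$ topology. Since $V_1-V_2\in L^\infty(\Omega_1)$ and $q_1-q_2\in L^\infty(\Sigma_1)$, Cauchy--Schwarz (in the weighted space for the bulk integral, and in $L^2$ for the boundary integral) lets me pass to the limit to conclude
\[
\int_{\Omega_1}(V_1-V_2)\,\psi_1\overline{\psi_2}\,d^{1-2s}\,dx + \int_{\Sigma_1}(q_1-q_2)\,\phi_1\overline{\phi_2}\,d\mathcal H^{n-1}=0
\]
for all such pairs.

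The choice $\psi_1=\psi_2=0$ (which is admissible since $0\in\tilde S_{s,0,V_j,q_j}$) together with arbitrary $\phi_1,\phi_2\in L^2(\Sigma_1)$ immediately yields $q_1=q_2$ a.e. on $\Sigma_1$, by polarization. This leaves the bulk identity
\[
\int_{\Omega_1}(V_1-V_2)\,\psi_1\overline{\psi_2}\,d^{1-2s}\,dx =0,\qquad \psi_j\in\tilde S_{s,0,V_j,q_j}.
\]

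The main point is now to deduce $V_1=V_2$ from this, which is where the reduction to a non-degenerate problem enters. On $\Omega_1\Subset \Omega$ the weight $d^{1-2s}$ is $C^2$, strictly positive, and bounded above and below, so after dividing the equation by $d^{1-2s}$ every $\psi\in\tilde S_{s,0,V_j,q_j}$ satisfies in $\Omega_1$ the non-degenerate Schr\"odinger-type equation
\[
-\Delta\psi - (1-2s)\,d^{-1}\nabla d\cdot\nabla\psi + V_j\,\psi = 0.
\]
Because the drift $W:=-(1-2s)\nabla\log d$ is a gradient, the substitution $w:=d^{(1-2s)/2}\psi$ gauges it away and produces the standard equation $-\Delta w+(V_j+V_0)w=0$ in $\Omega_1$, where $V_0:=-d^{-(1-2s)/2}\Delta d^{(1-2s)/2}\in C^\infty(\Omega_1)$ is \emph{independent of $j$}. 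The weight in the identity cancels exactly, because $d^{-(1-2s)}\cdot d^{1-2s}=1$, so the identity becomes the classical Calder\'on identity
\[
\int_{\Omega_1}(V_1-V_2)\,w_1\overline{w_2}\,dx=0,
\]
valid for all $w_j$ solving $-\Delta w_j+(V_j+V_0)w_j=0$ in $\Omega_1$.

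At this point I would invoke the standard Sylvester--Uhlmann CGO construction for the operator $-\Delta+V_j+V_0$ (with $V_j,V_0$ extended by zero outside $\Omega_1$), producing solutions $w_j=e^{\rho_j\cdot x}(1+r_j)$ with $\rho_j\in\mathbb C^n$, $\rho_j\cdot\rho_j=0$, $\rho_1+\rho_2=i\xi$ for prescribed $\xi\in\mathbb R^n$, and $\|r_j\|_{L^2(\Omega_1)}\to 0$ as $|\rho_j|\to\infty$; the fact that $V_0$ is the same for both solutions ensures the background cancels cleanly. Inserting these CGOs into the reduced identity and sending $|\rho_j|\to\infty$ yields $\widehat{(V_1-V_2)\chi_{\Omega_1}}(\xi)=0$ for every $\xi\in\mathbb R^n$, hence $V_1=V_2$. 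The main obstacle is really the reduction step -- justifying that the gauge transformation $\psi\mapsto d^{(1-2s)/2}\psi$ maps $\tilde S_{s,0,V_j,q_j}$ bijectively onto $H^1$-solutions of the non-degenerate gauged equation in $\Omega_1$ (which uses that $d$ is bounded away from zero on $\Omega_1$) and that the weight cancellation is exact; once these are established, the CGO machinery of \cite{SU87} applies verbatim.
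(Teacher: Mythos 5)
Your proof follows essentially the same route as the paper's: Alessandrini identity, simultaneous bulk--boundary Runge approximation (Proposition \ref{prop:s_Runge}), zeroing out the bulk solutions and testing the boundary integral to obtain $q_1=q_2$, and then a Liouville transform $w=d^{(1-2s)/2}\psi$ on $\Omega_1\Subset\Omega$ so that the weights cancel exactly in the reduced Alessandrini identity and the classical Sylvester--Uhlmann CGOs can be inserted. One small computational slip: the gauged equation is $-\Delta w + (V_j + Q)w = 0$ with $Q=+\,d^{-(1-2s)/2}\Delta\bigl(d^{(1-2s)/2}\bigr)$, so your $V_0$ carries the wrong sign, but since the same background potential appears for both $j$ this does not affect the conclusion.
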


\begin{proof}
By virtue of the Alessandrini identity we obtain
\begin{align*}
0 = \int\limits_{\Omega_1} d^{1-2s} (V_1 - V_2)u_1 \overline{u_2} dx + \int\limits_{\Sigma_1} (q_1- q_2) u_1 \overline{u_2} d\mathcal{H}^{n-1},
\end{align*}
for $u_1, u_2$ weak solutions to \eqref{eq:frac_Schr}. Now an approximation argument as in the proof of Theorem \ref{thm:Q11/2} implies that for every $\phi_j \in L^2(\Sigma_1)$ and $\psi_j \in \tilde{S}_{s,0,V_j,q_j}$ and $j\in \{1,2\}$ we obtain
\begin{align*}
0 = \int\limits_{\Omega_1} d^{1-2s} (V_1 - V_2) \psi_1  \overline{\psi_2} dx + \int\limits_{\Sigma_1} (q_1-q_2) \phi_1 \overline{\phi_2} d\mathcal{H}^{n-1}.
\end{align*}
With this in hand, the proof that $q_1 = q_2$ is immediate by choosing $\psi_1 = \psi_2 =0$, $\phi_1 \in C_c^{\infty}(\Sigma_1)$ arbitrary and $\phi_2 =1$.
The uniqueness $V_1 = V_2$ follows by a reduction of the problem in $\Omega_1$ to a Schrödinger type problem. Carrying out a Liouville transform (see for instance \cite{Salo08}), the equation
\begin{align*}
- \nabla \cdot d^{1-2s} \nabla u + V d^{1-2s} u = 0 \mbox{ in } \Omega_1
\end{align*}
is transferred to the Schrödinger type problem
\begin{align*}
- \D w +(Q + V d^{\frac{1-2s}{2}}) w = 0 \mbox{ in } \Omega_1\;,
\end{align*}
where $Q := \frac{\D d^{\frac{1-2s}{2}}}{d^{\frac{1-2s}{2}}}$ and $w:=d^{\frac{1-2s}{2}}u$. We note that $Q \in L^{\infty}(\Omega_1)$ since $d \in C^2(\Omega)$ and $\dist(\partial \Omega_1, \partial \Omega)>0$. Now, standard CGO constructions allow to obtain solutions of the form
\begin{align*}
w_1 = e^{i \xi \cdot x}(e^{i k \cdot x} + r_1), \ w_2 = e^{i \xi' \cdot x}(e^{- i k \cdot x} + r_{2}),
\end{align*}
with $\xi, \xi' \in \C^n$, $k\in \R^n$, $\xi \cdot \xi = k\cdot \xi=0$, $\xi' = - \text{Re}(\xi) + i \text{Im}(\xi)$ and $\|r_{j}\|_{L^2(\Omega_1)} \rightarrow 0$ as $|\xi'| \rightarrow \infty$.
Then the functions
\begin{align*}
u_j := d^{\frac{2s-1}{2}} w_j, \ j \in \{1,2\}
\end{align*}
however solve the equation
\begin{align*}
d^{\frac{2s-1}{2}}(- \nabla \cdot d^{1-2s} \nabla w_j + V d^{1-2s} w_j) = 0 \mbox{ in } \Omega_1
\end{align*}
in a weak sense. Due to the assumed regularity of $d$, they also satisfy 
\begin{align*}
- \nabla \cdot d^{1-2s} \nabla w_j + V d^{1-2s} w_j = 0 \mbox{ in } \Omega_1
\end{align*}
in a weak sense. By virtue of the result from Proposition \ref{prop:s_Runge} we may thus approximate these functions by functions $\psi_j \in S_{s,0,V,q}$. Inserting these into the Alessandrini identity, recalling that $q_1 = q_2$ and passing to the limit in the approximation parameter then implies 
\begin{align*}
0 = \int\limits_{\Omega_1} (V_1 - V_2) e^{2 i k \cdot x} dx.
\end{align*}
As a consequence, also $V_1 = V_2$.
\end{proof}

\begin{rmk}
\label{rmk:CGO} 
While in the study of the question (Q1) the situation in which $\Omega_1 \Subset \Omega$, the construction of CGOs to the degenerate equation \eqref{eq:frac_Schr} can essentially be avoided by using the non-degeneracy of the equation in $\Omega_1$, this can no longer be circumvented in the setting of question (Q2).

We refer to the next two sections for the construction of a new family of CGO type solutions for a closely related equation. These will be used to answer the question (Q2) in the case $s \in (\frac{1}{2},1)$ and will also provide a partial answer in the case $s= \frac{1}{2}$.
\end{rmk}

\section{On a Carleman Estimate for the ``Caffarelli-Silvestre Extension''}
\label{sec:Carl}

In this and the next section we address the question (Q2) for $s\geq \frac{1}{2}$ in the absence of magnetic potentials. As a major ingredient, we here construct CGO solutions to the equation
\begin{align} 
\label{eq:weighted_problem_a} 
\begin{split}
\nabla \cdot x_{n+1}^{1-2s} \nabla u + x_{n+1}^{1-2s} V u & = 0 \mbox{ in } \Omega,\\
\lim\limits_{x_{n+1}\rightarrow 0} x_{n+1}^{1-2s} \p_{n+1} u+ q u & = 0 \mbox{ on } \Sigma_1,
\end{split}
\end{align}
where $\Sigma_1 = \overline{\Omega}\cap \{x_{n+1}=0\}$ is assumed to be a smooth, $n$-dimensional set and $\partial \Omega$ is $C^{\infty}$ regular (the arguments from below show that $C^m$-regular with $m=m(s)>0$ would suffice).
The CGO construction is achieved by virtue of a duality argument and a suitable Carleman estimate.

The degenerate behaviour of the equation is reflected in the form of the CGOs. In order to avoid issues with the Muckenhoupt weight in the equation at $x_{n+1}=0$, using the notation $x=(x', x_{n+1}) \in \mathbb{\R}^{n+1}_+$, we only consider wave vectors $\xi' \in \C^n$ with $\xi' \cdot \xi' = 0$ which are orthogonal to $e_{n+1}$. More precisely, we seek to construct solutions of the form
\begin{align*}
u(x) = e^{ \xi' \cdot x'}( a(x)+ r(x)).
\end{align*}
with amplitudes $a(x)=e^{ik'\cdot x' + i k_{n+1} x_{n+1}^{2s}} $, $k\in \R^{n+1}$, and errors $r: \Omega \rightarrow \R$. We emphasize that the nonlinear (in $x_{n+1}$) phase dependence $i k_{n+1} x_{n+1}^{2s}$ is also a consequence of the degenerate elliptic character of the equation (see the estimate for $\tilde{L}_{-\xi',V}^s$ in \eqref{eq:CGO_1} in the proof of Proposition \ref{CGO_construction}).
The function $r: \Omega \rightarrow \R$ is an error for which we seek to produce decay estimates as $|\xi'| \rightarrow \infty$ by means of a suitable Carleman estimate.

We begin by a discussion of the Carleman estimate which underlies our CGO construction. 

\begin{prop}[Carleman estimate]
\label{prop:Carl}
Let $s\in [\frac{1}{2},1)$ and let $\xi' \in \C^n$ be such that $\xi' \cdot \xi' = 0$. Assume that $\Omega \subset \overline{\R}^{n+1}_+$ is a smooth domain and that $\overline\Omega \cap \{x_{n+1}=0\} =: \Sigma_1$ is a smooth, $n$-dimensional set.  If $s=\frac{1}{2}$, further assume that $\|q\|_{L^{\infty}(\Sigma_1)}$ is sufficiently small.
Let $f\in (H^{1}(\Omega,x_{n+1}^{1-2s}))^{\ast}$ with $\supp(f) \subset \Omega \cup (\overline{\Omega}\cap \{x_{n+1}=0\})$ and $g \in L^2(\Sigma_1)$.
Then, for $u \in H^1_{\partial \Omega \setminus \overline{\Sigma_1},0}(\Omega, x_{n+1}^{1-2s})\cap \mathcal{C}$ with $u = 0$ and $\lim\limits_{x \rightarrow \partial \Omega} x_{n+1}^{1-2s} \p_{\nu} u = 0$ on $\partial \Omega \setminus \overline{\Sigma_1}$ being a weak solution to
\begin{align}
\label{eq:main_eq}
\begin{split}
\nabla \cdot x_{n+1}^{1-2s} \nabla u & = f \mbox{ in } \Omega,\\
\lim\limits_{x_{n+1}\rightarrow 0} x_{n+1}^{1-2s} \p_{n+1} u + q u & = g \mbox{ on } \Sigma_1,
\end{split}
\end{align}
we have
\begin{align}
\label{eq:Carl}
\begin{split}
&|\xi'|^{s} \|e^{ \xi' \cdot x'} u\|_{L^2(\Sigma_1)} 
+ |\xi'| \|e^{ \xi' \cdot x'} x_{n+1}^{\frac{1-2s}{2}} u\|_{L^2(\Omega )} + \|e^{ \xi' \cdot x' } x_{n+1}^{\frac{1-2s}{2}} \nabla u\|_{L^2(\Omega)}\\
&\leq C (|\xi'| \|e^{ \xi' \cdot x '} x_{n+1}^{\frac{1-2s}{2}} \tilde{F} \|_{L^2(\Omega)} +  \|e^{ \xi' \cdot x '} x_{n+1}^{\frac{1-2s}{2}} F_0 \|_{L^2(\Omega)} + |\xi'|^{1-s}\|e^{ \xi' \cdot x '} g\|_{L^{2}(\Sigma_1)}).
\end{split}
\end{align}
Here the constant $C>0$ depends on $\|q\|_{L^{\infty}(\Sigma_1)}$ and $F =(F_0, \tilde{F}) \in L^2(\R^{n+1}_+,\R^{n+2})$ is the Riesz representation of $f$, i.e., it is such that
\begin{align*}
f(v) = (v, x_{n+1}^{1-2s} F_0)_{L^2(\Omega)} + (\nabla v, x_{n+1}^{1-2s} \tilde{F})_{L^2(\Omega)} \mbox{ for all } v \in H^{1}(\Omega, x_{n+1}^{1-2s}).
\end{align*}
\end{prop}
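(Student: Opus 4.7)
The plan is a direct weighted energy argument for the equation conjugated by $e^{\xi'\cdot x'}$, combined with the weighted boundary trace estimate of Lemma \ref{lem:boundary_trace_s}, exploiting that the conjugation of $\nabla\cdot x_{n+1}^{1-2s}\nabla$ admits a clean factorized form thanks to $\xi'\cdot\xi'=0$ and $\xi'\perp e_{n+1}$.

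\emph{Conjugation and energy identity.} I set $v := e^{\xi'\cdot x'}u$ and decompose $\xi' = \alpha+i\beta$ with $\alpha,\beta\in\R^n$; the constraint $\xi'\cdot\xi'=0$ forces $|\alpha|=|\beta|=|\xi'|/\sqrt 2$ and $\alpha\perp\beta$, while $\xi'\perp e_{n+1}$ gives $\p_{n+1}e^{\xi'\cdot x'}=0$ and $\xi'\cdot\nabla x_{n+1}^{1-2s}=0$. Using $\xi'\cdot\xi'=0$ once more the equation takes the factorized form
\[
(\nabla-\xi')\cdot\bigl(x_{n+1}^{1-2s}(\nabla-\xi')v\bigr) = \nabla\cdot(x_{n+1}^{1-2s}\nabla v)-2x_{n+1}^{1-2s}\xi'\cdot\nabla v = e^{\xi'\cdot x'}f
\]
in $\Omega$, with the transported condition $\lim_{x_{n+1}\to 0}x_{n+1}^{1-2s}\p_{n+1}v + qv = e^{\xi'\cdot x'}g$ on $\Sigma_1$ and $v=0$ on $\p\Omega\setminus\overline{\Sigma_1}$. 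Since $|v|=e^{\alpha\cdot x'}|u|$, the LHS norms in \eqref{eq:Carl} become weighted norms of $v$. Testing against $\bar v$ and integrating by parts, the $\p\Omega\setminus\overline{\Sigma_1}$ boundary terms vanish while the Robin condition converts the $\Sigma_1$-boundary term into $\int_{\Sigma_1}q|v|^2 - \int_{\Sigma_1}e^{\xi'\cdot x'}g\bar v$; using $\alpha\perp e_{n+1}$ and $\alpha\cdot\nu = 0$ on $\Sigma_1$ makes $\int_\Omega x_{n+1}^{1-2s}\alpha\cdot\nabla|v|^2 = 0$, and the real part of the resulting identity reads
\begin{equation}\label{eq:plan_en}
\int_\Omega x_{n+1}^{1-2s}|\nabla v|^2 - 2\int_\Omega x_{n+1}^{1-2s}\beta\cdot\Imm(\bar v\,\nabla v) = \int_{\Sigma_1}q|v|^2 - \Ree\int_{\Sigma_1}e^{\xi'\cdot x'}g\bar v - \Ree\langle e^{\xi'\cdot x'}f,\bar v\rangle.
\end{equation}
Young's inequality $|2\beta\cdot\Imm(\bar v\nabla v)|\leq \delta|\nabla v|^2 + \delta^{-1}|\xi'|^2|v|^2/2$ then gives $\|x_{n+1}^{(1-2s)/2}\nabla v\|_{L^2(\Omega)}^2 \lesssim |\xi'|^2\|x_{n+1}^{(1-2s)/2}v\|_{L^2(\Omega)}^2 + (\text{boundary and source terms})$.

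\emph{The coercive $L^2$-bound -- main obstacle.} The identity \eqref{eq:plan_en} controls only the difference $\|\nabla v\|^2 - |\xi'|^2\|v\|^2$, not the sum, so extracting the matching $|\xi'|\|x_{n+1}^{(1-2s)/2}v\|_{L^2(\Omega)} \lesssim \text{RHS}$ is the main technical obstacle. Note that $P_\xi$ fails to be elliptic: its principal symbol $\sigma(\eta) = -|\eta|^2 - 2i\xi'\cdot\eta = -|\eta-\beta|^2 + |\beta|^2 - 2i\alpha\cdot\eta$ vanishes on the $(n{-}2)$-dimensional characteristic set where $\alpha\cdot\eta=0$ and $|\eta-\beta|=|\beta|$, so the semiclassical $|\xi'|^{-1}$ gain is genuinely pseudodifferential and cannot come from a simple energy identity. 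I expect to produce it by Fourier analysis in the tangential variables $x'$, after a localization and flattening near $\Sigma_1$ (permissible since $u\in\mathcal{C}$ has the appropriate vanishing behaviour near $\partial\Omega\setminus\overline{\Sigma_1}$): for each fixed $\eta$ the problem reduces to the weighted second-order ODE $\p_{n+1}(x_{n+1}^{1-2s}\p_{n+1}\hat v) + x_{n+1}^{1-2s}\sigma(\eta)\hat v = \hat F$, and an explicit Green's function together with the weighted Hardy inequality for the Muckenhoupt weight $x_{n+1}^{1-2s}$ yields a pointwise-in-$\eta$ bound which, upon Plancherel integration in $\eta$, produces the desired coercive estimate.

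\emph{Trace and closure.} Once the coercive bulk bound is available, Lemma \ref{lem:boundary_trace_s} applied with $\mu = |\xi'|$ delivers $|\xi'|^s\|v\|_{L^2(\Sigma_1)} \lesssim \|x_{n+1}^{(1-2s)/2}\nabla v\|_{L^2(\Omega)} + |\xi'|\|x_{n+1}^{(1-2s)/2}v\|_{L^2(\Omega)}$, producing the first term on the LHS of \eqref{eq:Carl}. The same trace inequality applied to the $\int_{\Sigma_1}q|v|^2$ term on the right of \eqref{eq:plan_en} yields a factor $C\|q\|_\infty|\xi'|^{-2s}$ multiplying the coercive bulk norms, which absorbs automatically for $|\xi'|$ large whenever $s>1/2$; for $s=1/2$ the trace gain is only $|\xi'|^{-1}$ and uniform absorption in $|\xi'|\geq 1$ requires the smallness of $\|q\|_{L^\infty(\Sigma_1)}$, which accounts for the additional hypothesis in the statement. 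Finally the right-hand side of \eqref{eq:Carl} is matched by Cauchy--Schwarz using the Riesz representation $f(v) = (v,x_{n+1}^{1-2s}F_0)_{L^2} + (\nabla v, x_{n+1}^{1-2s}\tilde F)_{L^2}$ together with the pairing $|\Ree\int_{\Sigma_1}e^{\xi'\cdot x'}g\bar v|\leq \|g\|_{L^2(\Sigma_1)}\|v\|_{L^2(\Sigma_1)}$ bounded by the trace, yielding the prefactors $1$, $|\xi'|$ and $|\xi'|^{1-s}$ attached to $\|x_{n+1}^{(1-2s)/2}F_0\|$, $\|x_{n+1}^{(1-2s)/2}\tilde F\|$ and $\|g\|_{L^2(\Sigma_1)}$ respectively.
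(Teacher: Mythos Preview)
Your setup (conjugation, energy identity, use of Lemma~\ref{lem:boundary_trace_s} for the trace term, and the $s=\tfrac12$ smallness mechanism for $q$) is all correct and matches pieces of the paper's argument. The gap is exactly where you flag it: the coercive bulk bound $|\xi'|\|x_{n+1}^{(1-2s)/2}v\|_{L^2(\Omega)}\lesssim\text{RHS}$. Your proposed resolution --- tangential Fourier transform plus an ODE Green's function in $x_{n+1}$ --- is not viable here: $\Omega$ is a bounded domain, the tangential slices are not $\R^n$, and any localization/extension to the half-space destroys both the equation and the crucial vanishing $u|_{\partial\Omega\setminus\overline{\Sigma_1}}=0$ that you would need to control the cut-off errors at scale $|\xi'|$. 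More importantly, your diagnosis that ``the semiclassical $|\xi'|^{-1}$ gain is genuinely pseudodifferential and cannot come from a simple energy identity'' is wrong in this bounded-domain setting, and it leads you away from the mechanism that actually works.

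The paper obtains the $L^2$ gain from Poincar\'e's inequality in the direction of $\xi'$: since $u=0$ on $\partial\Omega\setminus\overline{\Sigma_1}$ and $\xi'\perp e_{n+1}$, for each fixed $x_{n+1}>0$ the function $x'\mapsto u(x',x_{n+1})$ is compactly supported in a bounded tangential slice, so $\|\xi'\cdot\nabla' w\|_{L^2}\gtrsim |\xi'|\|w\|_{L^2}$. To exploit this cleanly the paper does \emph{not} run a single energy identity on $v$; it splits $u=u_1+u_2$, where $u_1$ solves an auxiliary problem with an added $K|\xi'|^2 x_{n+1}^{1-2s}u_1$ term (so the first-order energy for $u_1$ is genuinely coercive and absorbs $f,g,qu$), and $u_2$ solves the homogeneous problem with source $K|\xi'|^2 x_{n+1}^{1-2s}u_1$. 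For $u_2$ one runs the second-order $L^2$-commutator expansion $\|L_{s,\phi}w\|^2=\|S_\phi w\|^2+\|A_\phi w\|^2+(\text{BC})$ with $A_\phi=-2\xi'\cdot\nabla'$, and the Poincar\'e inequality above turns $\|A_\phi w\|$ into the desired $|\xi'|\|w\|$. A substantial part of the work is then controlling the boundary commutator terms on $\partial\Omega\setminus\overline{\Sigma_1}$ (where $u_2=-u_1\neq 0$), using the smoothness/flatness of $\partial\Omega$ near $\partial\Sigma_1$ and the trace estimates for $u_1$. Your single energy identity on $v$ sidesteps these boundary commutators but cannot produce the $L^2$ gain; the paper's splitting is precisely what reconciles the two.
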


We remark that $\|F\|_{L^2(\Omega,x_{n+1}^{1-2s})} = \|f\|_{(H^{1}(\Omega,x_{n+1}^{1-2s}))^{\ast}}$.

\begin{rmk}
\label{rmk:bdry}
We remark that as $\partial \Omega$ is smooth and as $x_{n+1}=0$ on $\Sigma_1$, we have that $x_{n+1}$ vanishes to infinite order at $\partial \Sigma_1$, i.e. that the domain is arbitrarily flat in a neighbourhood of $\partial \Sigma_1$.
\end{rmk}

\begin{proof}
We argue in three steps using a splitting strategy. More precisely, we write $u = u_1 + u_2$ where $u_1$ (weakly) solves the problem
\begin{align*}
-\nabla \cdot x_{n+1}^{1-2s} \nabla u_1 + K|\xi'|^2x_{n+1}^{1-2s} u_1& = -f \mbox{ in } \Omega,\\
\lim\limits_{x_{n+1} \rightarrow 0} x_{n+1}^{1-2s} \p_{n+1} u_1  & = -q u + g \mbox{ on } \Sigma_1,\\
x_{n+1}^{1-2s}\p_{\nu} u_1 & = 0 \mbox{ on } \partial \Omega \setminus \overline{\Sigma_1 }.
\end{align*}
By the Lax-Milgram theorem, a unique (weak) solution to this problem exists in $H^1(\Omega, x_{n+1}^{1-2s})$ if $K>0$ is sufficiently large. It satisfies
\begin{align*}
(x_{n+1}^{1-2s} \nabla u_1, \nabla \varphi)_{\Omega} + K |\xi'|^2 (x_{n+1}^{1-2s} u_1, \varphi)_{\Omega} & = (F_0, x_{n+1}^{1-2s} \varphi)_{\Omega} + (\tilde{F}, x_{n+1}^{1-2s} \nabla \varphi)_{\Omega} + (-q u + g, \varphi)_{\Sigma_1}
\end{align*}
for any $\varphi \in H^1(\Omega, x_{n+1}^{1-2s})$. Here the notation $(\cdot, \cdot)_{\Omega}$ and $(\cdot, \cdot)_{\Sigma_1}$ refer to the $L^2(\Omega)$ and $L^2(\Sigma_1)$ scalar products respectively.
The function $u_2=u-u_1$ is defined accordingly.
\medskip

\emph{Step 1: Estimate for $u_1$.} We first estimate $u_1$. To this end, we test the equation for $u_1$ with $\varphi:= |\xi'|^2 e^{2 x' \cdot \xi'}u_1$. This yields
\begin{align*}
& |\xi'|^2(x_{n+1}^{1-2s} \nabla u_1, \nabla (e^{2x'\cdot \xi'} u_1))_{\Omega} + K |\xi'|^4 (x_{n+1}^{1-2s} u_1, e^{2 x' \cdot \xi'} u_1)_{\Omega}  = |\xi'|^2 (-q u + g, e^{2 x' \cdot \xi'} u_1)_{\Sigma_1}\\
& \quad - |\xi'|^2 (F_0, x_{n+1}^{1-2s}  e^{2 x' \cdot \xi'} u_1)_{\Omega} - |\xi'|^2(\tilde{F}, x_{n+1}^{1-2s}\nabla (e^{2\xi' \cdot x'} u_1))_{\Omega}.
\end{align*}
Using Young's inequality and choosing $K>0$ sufficiently large this implies that 
\begin{align}
\label{eq:apriori}
\begin{split}
&\frac{K}{2} |\xi'|^4 \|x_{n+1}^{\frac{1-2s}{2}} e^{x' \cdot \xi'} u_1\|_{L^2(\Omega)}^2
+  |\xi'|^2 \|x_{n+1}^{\frac{1-2s}{2}} e^{x' \cdot \xi'} \nabla u_1\|_{L^2(\Omega)}^2\\
&\leq C |\xi'|^2 \|x_{n+1}^{\frac{1-2s}{2}} e^{x' \cdot \xi'} \tilde{F}\|_{L^2(\Omega)}^2 + C \|x_{n+1}^{\frac{1-2s}{2}} e^{x' \cdot \xi'} F_0\|_{L^2(\Omega)}^2
+ \epsilon |\xi'|^{2+2s} \|e^{x' \cdot \xi'} u_1\|_{L^2(\Sigma_1)}^2 \\
& \quad + C_{\epsilon}|\xi'|^{2-2s}( \|e^{x'\cdot \xi'}g\|_{L^2(\Sigma_1)}^2 + \|q\|_{L^{\infty}(\Sigma_1)}^2 \|e^{x'\cdot \xi'}u\|_{L^2(\Sigma_1)}^2).
\end{split}
\end{align}
Now the boundary-bulk interpolation estimate from Lemma \ref{lem:boundary_trace_s} allows us to further add a boundary contribution to the left hand side of this:
\begin{align}
\label{eq:apriori1}
\begin{split}
&|\xi'|^{2+2s}\|e^{x' \cdot \xi'} u_1\|^2_{L^2(\Sigma_1)}
+ \frac{K}{2} |\xi'|^4 \|x_{n+1}^{\frac{1-2s}{2}} e^{x' \cdot \xi'} u_1\|_{L^2(\Omega)}^2
+  |\xi'|^2 \|x_{n+1}^{\frac{1-2s}{2}} e^{x' \cdot \xi'} \nabla u_1\|_{L^2(\Omega)}^2\\
&\leq C |\xi'|^2 \|x_{n+1}^{\frac{1-2s}{2}} e^{x' \cdot \xi'} \tilde{F}\|_{L^2(\Omega)}^2 + C \|x_{n+1}^{\frac{1-2s}{2}} e^{x' \cdot \xi'} F_0\|_{L^2(\Omega)}^2
+ \epsilon |\xi'|^{2+2s}\|e^{x' \cdot \xi'} u_1\|_{L^2(\Sigma_1)}^2 \\
& \quad + C_{\epsilon}|\xi'|^{2-2s}( \|e^{x'\cdot \xi'}g\|_{L^2(\Sigma_1)}^2 + \|q\|_{L^{\infty}(\Sigma_1)}^2\|e^{x'\cdot \xi'}u\|_{L^2(\Sigma_1)}^2).
\end{split}
\end{align}
In particular, this allows us to absorb the boundary contributions involving $u_1$ from the right hand side of \eqref{eq:apriori1} into the left hand side of this inequality. As a consequence, we obtain the bound
\begin{align}
\label{eq:apriori11}
\begin{split}
&|\xi'|^{2+2s}\|e^{x' \cdot \xi'} u_1\|^2_{L^2(\Sigma_1)}
+ \frac{K}{2} |\xi'|^4 \|x_{n+1}^{\frac{1-2s}{2}} e^{x' \cdot \xi'} u_1\|_{L^2(\Omega)}^2
+  |\xi'|^2 \|x_{n+1}^{\frac{1-2s}{2}} e^{x' \cdot \xi'} \nabla u_1\|_{L^2(\Omega)}^2\\
&\leq C |\xi'|^2 \|x_{n+1}^{\frac{1-2s}{2}} e^{x' \cdot \xi'} \tilde{F}\|_{L^2(\Omega)}^2 + C \|x_{n+1}^{\frac{1-2s}{2}} e^{x' \cdot \xi'} F_0\|_{L^2(\Omega)}^2 + \\ &\quad 
+  C_{\epsilon}|\xi'|^{2-2s}( \|e^{x'\cdot \xi'}g\|_{L^2(\Sigma_1)}^2 + \|q\|_{L^{\infty}(\Sigma_1)}^2 \|e^{x'\cdot \xi'} u\|_{L^2(\Sigma_1)}^2).
\end{split}
\end{align}

\medskip

\emph{Step 2: Estimate for $u_2$.}
Next we estimate the contribution from $u_2$ which (weakly) solves the equation
\begin{align}
\label{eq:u_2}
\begin{split}
\nabla \cdot x_{n+1}^{1-2s} \nabla u_2  & =  -K|\xi'|^2x_{n+1}^{1-2s} u_1\mbox{ in } \Omega,\\
\lim\limits_{x_{n+1} \rightarrow 0} x_{n+1}^{1-2s} \p_{n+1} u_2  & = 0 \mbox{ on } \Sigma_1,\\
x_{n+1}^{1-2s}\p_{\nu} u_2 & = 0 \mbox{ on } \partial \Omega \setminus \Sigma_1.
\end{split}
\end{align}
In order to estimate $u_2$, we first assume that $u_1 \in C^{0,\alpha}(\Omega)$ for some $\alpha \in (0,1)$. With only slight modifications it is then possible to invoke the regularity results from \cite[Appendix A]{KRSIV}. Indeed, the regularity estimates from \cite[Proposition 8.2]{KRSIV} yield $C^{2,\alpha}$ regularity up to the boundary in $\inte(\Sigma_1)$. Classical, uniformly elliptic regularity estimates in turn yield $C^{2,\alpha}$ regularity in a neighbourhood of $\partial \Omega \setminus \overline{\Sigma_1}$ up to the boundary. Thus, it remains to discuss the regularity in a neighbourhood of $\partial \Sigma_1$ up to the boundary. This however follows from the $C^2$ regularity of the boundary which implies that the approximation by the flat problem at that point is still valid. Combining these results yields the global $C^{2,\alpha}(\Omega)$ regularity of $u_2$.

In order to estimate $u_2$, we conjugate the operator $L_s := \nabla \cdot x_{n+1}^{1-2s} \nabla$ with the weight $e^{ x' \cdot \xi'}$. This yields the conjugated operator 
\begin{align*}
\tilde{L}_{s,\phi}:= \nabla \cdot x_{n+1}^{1-2s} \nabla - 2 x_{n+1}^{1-2s} \xi' \cdot \nabla'.
\end{align*}

Next, we define $u_2 = x_{n+1}^{\frac{2s-1}{2}} e^{-x'\cdot \xi'}w$ and multiply the operator $\tilde{L}_{s,\phi}$ by $x_{n+1}^{\frac{2s-1}{2}}$. As a consequence, the operator acting on $w$ turns into 
\begin{align*}
L_{s,\phi}:= x_{n+1}^{\frac{2s-1}{2}}\nabla \cdot x_{n+1}^{1-2s} \nabla x_{n+1}^{\frac{2s-1}{2}} - 2 \xi' \cdot \nabla',
\end{align*}
\noindent and since $\xi' \perp e_{n+1}$ the boundary condition on $\Sigma_1$ correspondingly becomes
\begin{align}
\label{eq:bdry1}
\lim\limits_{x_{n+1}\rightarrow 0} x_{n+1}^{1-2s} \p_{n+1}(x_{n+1}^{\frac{2s-1}{2}} w)  & = 0.
\end{align}
On $\partial \Omega \setminus \overline{\Sigma_1}$ the boundary contributions however is non-trivial and turns into 
\begin{align}
\label{eq:bdry2}
\lim\limits_{d \rightarrow 0} x_{n+1}^{1-2s} \p_{\nu}(x_{n+1}^{\frac{2s-1}{2}} w)  & =  \lim\limits_{d \rightarrow 0}  x_{n+1}^{1-2s}(\nu \cdot \xi') (x_{n+1}^{\frac{2s-1}{2}} w).
\end{align}

Up to boundary contributions the bulk part of the operator can be split into its symmetric and antisymmetric parts:
\begin{align*}
S_{\phi} &= x_{n+1}^{\frac{2s-1}{2}}\nabla \cdot x_{n+1}^{1-2s}\nabla x_{n+1}^{\frac{2s-1}{2}} ,\\
A_{\phi} &= -2 \xi' \cdot \nabla'.
\end{align*}
Expanding the norm, computing the boundary terms (BC) and using the regularity of $u_2$, we thus infer 
\begin{align}
\label{eq:Carl_exp}
\| L_{s,\phi} w\|_{L^2(\Omega)}^2
& = \|S_{\phi} w \|_{L^2(\Omega)}^2 + \|A_{\phi} w\|_{L^2(\Omega)}^2 + \mbox{ (BC)}.
\end{align}
We emphasise that the $C^{2,\alpha}$ regularity of $u_2$ allows us to carry out the expansion of $L_{s,\phi}w$ as classically differentiable functions away from the boundary and that the resulting boundary contributions are given as classical boundary integrals. Using the observations from \eqref{eq:bdry1} and \eqref{eq:bdry2} these are of the form
\begin{align*}
(BC) = (BC)_1 + (BC)_2,
\end{align*}
where the contributions from $(BC)_1$ come from shifting $(S_{\phi} w, A_{\phi} w)_{L^2(\Omega)} = (w, S_{\phi} A_{\phi}w)_{L^2(\Omega)} + (BC)_1 $ and the ones from $(BC)_2$ from $(S_{\phi} w, A_{\phi} w)_{L^2(\Omega)} = -(A_{\phi} S_{\phi} w, w)_{L^2(\Omega)} + (BC)_2 $. 

We next estimate these boundary contributions individually.\\

\emph{Step 2a: $(BC)_1$.}
For the boundary contribution $(BC)_1$ we obtain
\begin{align}
\label{eq:BC1}
\begin{split}
(BC)_1& :=-2 (x_{n+1}^{1-2s} \p_{\nu}(x_{n+1}^{\frac{2s-1}{2}} w), \xi' \cdot \nabla' (x_{n+1}^{\frac{2s-1}{2}}w))_{L^2(\partial \Omega)}\\
&\quad + 2 (x_{n+1}^{\frac{2s-1}{2}} w, x_{n+1}^{1-2s}\p_{\nu}((\xi'\cdot \nabla')(x_{n+1}^{\frac{2s-1}{2}}w)))_{L^2(\partial \Omega)}\\
& = - 2 (x_{n+1}^{1-2s} \p_{\nu}(x_{n+1}^{\frac{2s-1}{2}} w), \xi' \cdot \nabla' (x_{n+1}^{\frac{2s-1}{2}}w))_{L^2(\partial \Omega)}\\
& \quad - 2 (x_{n+1}^{\frac{2s-1}{2}} w, x_{n+1}^{1-2s} [(\xi'\cdot \nabla')\nu] \cdot \nabla (x_{n+1}^{\frac{2s-1}{2}} w) )_{L^2(\partial \Omega)}\\
& \quad + 2 (x_{n+1}^{\frac{2s-1}{2}} w, x_{n+1}^{1-2s} (\xi'\cdot \nabla') \p_{\nu} (x_{n+1}^{\frac{2s-1}{2}} w) )_{L^2(\partial \Omega)}\\
& = -2 (x_{n+1}^{1-2s} (\nu \cdot \xi') (x_{n+1}^{\frac{2s-1}{2}} w), \xi' \cdot \nabla' (x_{n+1}^{\frac{2s-1}{2}}w))_{L^2(\partial \Omega)}\\
& \quad -  2 (x_{n+1}^{\frac{2s-1}{2}} w, x_{n+1}^{1-2s} [(\xi'\cdot \nabla')\nu] \cdot \nabla (x_{n+1}^{\frac{2s-1}{2}} w) )_{L^2(\partial \Omega)}\\
& \quad +2 ((\xi' \cdot \nabla') [x_{n+1}^{1-2s} (\nu \cdot \xi') (x_{n+1}^{\frac{2s-1}{2}} w) ], x_{n+1}^{\frac{2s-1}{2}}w)_{L^2(\partial \Omega)} .
\end{split}
\end{align}

Here we have used \eqref{eq:bdry1} and \eqref{eq:bdry2} in the third equality. We now discuss these contributions separately. We split the derivative $\xi' \cdot \nabla'$ into a tangential and a normal contribution. If $\tau_j(x)$, $j=1,...,n$ are unit vectors depending smoothly on $x$ and forming with the addition of $\nu(x)$ an orthonormal basis of $\mathbb R^{n+1}$, then we can write 
$$ \nabla = \nu(x) \p_\nu + \sum_{j=1}^n \tau_j(x) (\tau_j(x)\cdot \nabla)\;, $$
\noindent and therefore 
\begin{align}
\label{eq:tan_norm_split}
\xi' \cdot \nabla'  = |\xi'|[ (e_{\xi'}\cdot \nu(x)) \p_\nu + \sum_{j=1}^n (e_{\xi'}\cdot \tau_j(x)) (\tau_j(x)\cdot \nabla) ] =  |\xi'|[ (e_{\xi'}\cdot \nu(x)) \p_\nu + \beta(x)\cdot\nabla_\tau ] \;,
\end{align} 
where $e_{\xi'}:= \frac{1}{|\xi'|}\xi'$, $ \beta$ is a smooth vector function whose norm is bounded uniformly, independently of $|\xi'|$ and whose j-th component is $e_{\xi'}\cdot \tau_j(x)$, and the operator $\nabla_\tau$ represents the tangential derivatives $\tau_j(x)\cdot \nabla$. 

For the first contribution in \eqref{eq:BC1}, we use the splitting \eqref{eq:tan_norm_split} in combination with \eqref{eq:bdry1}, \eqref{eq:bdry2} for the normal derivatives and integrate by parts in the tangential directions:
\begin{align}
\label{eq:bdry_est1}
\begin{split}
2 (x_{n+1}^{1-2s} (\nu \cdot \xi')& (x_{n+1}^{\frac{2s-1}{2}} w),\xi' \cdot \nabla' (x_{n+1}^{\frac{2s-1}{2}}w))_{L^2(\partial \Omega)}
\\ & = 2|\xi'|^2  (x_{n+1}^{1-2s} (\nu \cdot e_{\xi'}) (x_{n+1}^{\frac{2s-1}{2}} w), (e_{\xi'}\cdot \nu) \p_\nu(x_{n+1}^{\frac{2s-1}{2}}w))_{L^2(\partial \Omega)}
\\ & \quad + 2 |\xi'|^2 (x_{n+1}^{1-2s} (\nu \cdot e_{\xi'}) (x_{n+1}^{\frac{2s-1}{2}} w), \beta(x)\cdot\nabla_\tau (x_{n+1}^{\frac{2s-1}{2}}w))_{L^2(\partial \Omega)}
\\ & = 2|\xi'|^3([x_{n+1}^{1-2s}(\nu \cdot e_{\xi'})^3] (x_{n+1}^{\frac{2s-1}{2}} w), (x_{n+1}^{\frac{2s-1}{2}}w))_{L^2(\partial \Omega )}
\\ & \quad + |\xi'|^2 (x_{n+1}^{1-2s} (\nu \cdot e_{\xi'})\beta(x), \nabla_\tau (x_{n+1}^{2s-1}|w|^2))_{L^2(\partial \Omega)}
\\ &=  -|\xi'|^2 ( (x_{n+1}^{\frac{2s-1}{2}} w)[\di_{\partial \Omega}(\beta(x) x_{n+1}^{1-2s} (\nu \cdot e_{\xi'}))]  , x_{n+1}^{\frac{2s-1}{2}} w )_{L^2(\partial \Omega )}\\
& \quad + 2|\xi'|^3([x_{n+1}^{1-2s}(\nu \cdot e_{\xi'})^3] (x_{n+1}^{\frac{2s-1}{2}} w), (x_{n+1}^{\frac{2s-1}{2}}w))_{L^2(\partial \Omega )}.
\end{split}
\end{align}
We remark that both boundary terms are controlled by 
\begin{align}
\label{eq:bdry_dominated_term}
|\xi'|^3 \|x_{n+1}^{\frac{2s-1}{2}} w\|_{L^2(\partial \Omega \setminus \Sigma_1)}^2.
\end{align}
Indeed, to observe this, it suffices to prove that for $x\in \partial \Omega$ with $x_{n+1}\rightarrow 0$ we have that for the weights 
\begin{align}
\label{eq:weights_bdry}
[x_{n+1}^{1-2s}(\nu(x) \cdot e_{\xi'})^3] \rightarrow 0,\
[\di_{\partial \Omega}(\beta(x) x_{n+1}^{1-2s} (\nu(x) \cdot e_{\xi'}))]   \rightarrow 0 \mbox{ as } x_{n+1}\rightarrow 0.
\end{align}
Parametrizing the boundary $\partial \Omega$ in a neighbourhood of $\partial \Sigma_1$, we obtain that if $\partial \Omega$ is sufficiently smooth and thus sufficiently flat at $\partial \Omega$ the claim of \eqref{eq:weights_bdry} can always be ensured. Indeed, in this case the boundary can be locally parametrized by $\psi(x) = (x', |x'-\gamma(x')|^m)$, where $\gamma(x')$ is a smooth function describing $\partial \Sigma_1$. Thus, expressing $x_{n+1}$ and $\nu(x')\cdot e_{\xi'}$ in terms of $x'$, for instance yields 
\begin{align*}
|x_{n+1}^{1-2s}(\nu(x')\cdot e_{\xi'}) |\leq C_{\gamma, |\nabla' \gamma|} |x'-\gamma(x')|^{m(1-2s)} |x'-\gamma(x')|^{m-1} \rightarrow 0,
\end{align*}  
as $x' \rightarrow \gamma(x')$ and thus $x_{n+1}\rightarrow 0$
by choosing $m=m(s)>0$ sufficiently large (which is ensured by the boundary smoothness, see Remark \ref{rmk:bdry}). Since in local coordinates the expression for the divergence only involves derivatives in the tangential directions, the same argument applies to the second expression in \eqref{eq:bdry_est1}. Together with the boundedness of $\overline{\Omega}$ this proves the bound \eqref{eq:bdry_dominated_term}.

The third term in \eqref{eq:BC1} can be treated analogously as the first term in \eqref{eq:BC1} . To this end, we first note that 
\begin{align}
\label{eq:bdry_est3}
\begin{split}
&2((\xi' \cdot \nabla') [x_{n+1}^{1-2s} (\nu \cdot \xi') (x_{n+1}^{\frac{2s-1}{2}} w) ], x_{n+1}^{\frac{2s-1}{2}}w)_{L^2(\partial \Omega)}\\
&= 2( x_{n+1}^{1-2s} (\nu \cdot \xi') (\xi' \cdot \nabla')(x_{n+1}^{\frac{2s-1}{2}} w) , x_{n+1}^{\frac{2s-1}{2}}w)_{L^2(\partial \Omega)}\\
& \quad + 2 ((x_{n+1}^{\frac{2s-1}{2}} w)  [(\xi' \cdot \nabla') (x_{n+1}^{1-2s} (\nu \cdot \xi') )], x_{n+1}^{\frac{2s-1}{2}}w)_{L^2(\partial \Omega)}.
\end{split}
\end{align}
Hence, the first contribution is of the same form as the term from \eqref{eq:bdry_est1}. It suffices to deal with the second one and to prove that 
\begin{align*}
[(\xi' \cdot \nabla') (x_{n+1}^{1-2s} (\nu \cdot \xi') )] \rightarrow 0
\end{align*}
for $x \in \partial \Omega$ with $x_{n+1}\rightarrow 0$. This however follows in the same way as in \eqref{eq:weights_bdry} and implies that the contributions in \eqref{eq:bdry_est3} are also controlled by terms of the form \eqref{eq:bdry_dominated_term}.

Finally, it remains to deal with the second contribution in \eqref{eq:BC1}. For this we observe that $(\xi' \cdot \nabla') \nu$ does not have any normal component. Thus, an integration by parts yields
\begin{align}
\label{eq:bdry_est2}
\begin{split}
& - 2 (x_{n+1}^{\frac{2s-1}{2}} w, x_{n+1}^{1-2s} [(\xi'\cdot \nabla')\nu] \cdot \nabla (x_{n+1}^{\frac{2s-1}{2}} w) )_{L^2(\partial \Omega)}\\
& = - (x_{n+1}^{1-2s} [(\xi'\cdot \nabla')\nu] ,\nabla (x_{n+1}^{2s-1} |w|^2) )_{L^2(\partial \Omega)}\\
& =  ([\di_{\partial \Omega}( x_{n+1}^{1-2s} [(\xi'\cdot \nabla')\nu])] ( x_{n+1}^{\frac{2s-1}{2}} w),  (x_{n+1}^{\frac{2s-1}{2}} w) )_{L^2(\partial \Omega)}.
\end{split}
\end{align}
It remains to prove that
\begin{align*}
[\di_{\partial \Omega}( x_{n+1}^{1-2s} [(\xi'\cdot \nabla')\nu])] \rightarrow 0 
\end{align*}
for $x \in \partial \Omega$ with $x_{n+1}\rightarrow 0$, as this then ensures that also the boundary contribution in \eqref{eq:bdry_est2} is controlled by \eqref{eq:bdry_dominated_term}. The desired estimate however follows from the explicit parametrization $\psi(x)=(x', |x'-\gamma(x')|^m)$, which yields that
\begin{align*}
|[\di_{\partial \Omega}( x_{n+1}^{1-2s} [(\xi'\cdot \nabla')\nu])]| \leq C |x'-\gamma(x')|^{m(1-2s)+m-2}|\xi'|.
\end{align*}
Thus, for $m=m(s)>0$ sufficiently large, the claim follows.

Inspecting the quantities in \eqref{eq:bdry_est1}-\eqref{eq:bdry_est2} and recalling that $\xi' \perp e_{n+1}$, we note that all right hand side contributions in \eqref{eq:bdry_est1}-\eqref{eq:bdry_est2} are really only integrals over $\partial \Omega \setminus \overline{\Sigma_1}$.
Thus, due to the assumed boundary regularity of $\Omega$  and the boundedness of $\Omega$, all of the contributions on the right hand side of \eqref{eq:BC1} are bounded in terms of \eqref{eq:bdry_dominated_term}.

Last but not least, we seek to estimate the quantity \eqref{eq:bdry_dominated_term} by bulk contributions of $u_1$.
Rewriting \eqref{eq:bdry_dominated_term} in terms of $u_2$, recalling that $u_2 = u -u_1$ and that $u|_{\partial \Omega \setminus \overline{\Sigma_1}} = 0$, we infer that all boundary contributions in $(BC)_1$ are controlled by 
\begin{align}
\label{eq:bdry_aux1}
|\xi'|^3 \|e^{\xi' \cdot x'} u_2\|_{L^2(\partial \Omega \setminus \overline{\Sigma_1})}^2 \leq |\xi'|^3 \|e^{\xi' \cdot x'} u_1\|_{L^2(\partial \Omega \setminus \overline{\Sigma_1})}^2.
\end{align}
Using the trace estimate from Lemma \ref{lem:boundary_trace} and the fact that $s\geq \frac{1}{2}$, we deduce that
\begin{align}
\label{eq:bdry_aux2}
\begin{split}
|\xi'|^3 \|e^{\xi' \cdot x'} u_2\|_{L^2(\partial \Omega \setminus \overline{\Sigma_1})}^2 &\leq |\xi'|^3 \|e^{\xi' \cdot x'} u_1\|_{L^2(\partial \Omega \setminus \overline{\Sigma_1})}^2\\
&\leq C(|\xi'|^{4}\|e^{\xi' \cdot x'} u_1\|_{L^2(\Omega)}^2 + |\xi'|^{2}\|\nabla ( e^{\xi' \cdot x'} u_1) \|_{L^2(\Omega)}^2)\\
&\leq C (|\xi'|^{4} \|e^{\xi' \cdot x'} x_{n+1}^{\frac{1-2s}{2}} u_1\|_{L^2(\Omega)}^2 + |\xi'|^{2} \| e^{\xi' \cdot x'} x_{n+1}^{\frac{1-2s}{2}} \nabla u_1\|_{L^2(\Omega)}^2).
\end{split}
\end{align}

\emph{Step 2b: $(BC)_2$.}
Next we deal with the contributions in $(BC)_2$. These are of the form
\begin{align}
\label{eq:bdry_comm2}
\begin{split}
(x_{n+1}^{\frac{2s-1}{2}}\nabla \cdot x_{n+1}^{1-2s} \nabla  (x_{n+1}^{\frac{2s-1}{2}}w), (\xi' \cdot \nu) w)_{L^2(\partial \Omega)}
&= -K|\xi'|^2(x_{n+1}^{\frac{1-2s}{2}}e^{\xi' \cdot x' }u_1, (\xi'\cdot \nu) w)_{L^2(\partial \Omega)} \\
& \quad + 2(\xi' \cdot \nabla' w, (\xi' \cdot \nu) w)_{L^2(\partial \Omega)}.
\end{split}
\end{align}
Here we have used the bulk equation for $w$ which, due to the regularity of $w$, is continuous up to the boundary.

Splitting $\xi' \cdot \nabla' $ into tangential and normal components as in \eqref{eq:tan_norm_split}, the second term can be dealt with similarly as in the argument for \eqref{eq:bdry_est2}: Indeed,
\begin{align*}
 2(\xi' \cdot \nabla' w, (\xi' \cdot \nu) w)_{L^2(\partial \Omega)}
 &=  2|\xi'|^2(e_{\xi'} \cdot \nabla' (x_{n+1}^{\frac{2s-1}{2}}w), (e_{\xi'} \cdot \nu) x_{n+1}^{1-2s} x_{n+1}^{\frac{2s-1}{2}} w)_{L^2(\partial \Omega)}\\
& = 2|\xi'|^2 (  \p_{\nu}(x_{n+1}^{\frac{2s-1}{2}}w), (e_{\xi'} \cdot \nu)^2 x_{n+1}^{1-2s} (x_{n+1}^{\frac{2s-1}{2}} w))_{L^2(\partial \Omega)}\\
&\quad  - |\xi'|^2(  x_{n+1}^{\frac{2s-1}{2}} w, (x_{n+1}^{\frac{2s-1}{2}}w)  [\di_{\partial \Omega}(\beta (e_{\xi'} \cdot \nu) x_{n+1}^{1-2s})]  )_{L^2(\partial \Omega)}\\
& = 2|\xi'|^3 (  (x_{n+1}^{\frac{2s-1}{2}}w), (e_{\xi'} \cdot \nu)^3 x_{n+1}^{1-2s} (x_{n+1}^{\frac{2s-1}{2}} w))_{L^2(\partial \Omega)}\\
&\quad  - |\xi'|^2(  x_{n+1}^{\frac{2s-1}{2}} w, (x_{n+1}^{\frac{2s-1}{2}}w)  [\di_{\partial \Omega}(\beta (e_{\xi'} \cdot \nu) x_{n+1}^{1-2s})]  )_{L^2(\partial \Omega)}.
\end{align*}
Using the regularity of $\partial \Omega$, both terms can be estimates by a contribution of the form \eqref{eq:bdry_dominated_term}. 

For the first term on the right hand side of \eqref{eq:bdry_comm2}, we note that 
\begin{align*}
 -K|\xi'|^2(x_{n+1}^{\frac{1-2s}{2}}e^{\xi' \cdot x' }u_1, (\xi'\cdot \nu) w)_{L^2(\partial \Omega)}
 =  -K|\xi'|^2(e^{\xi' \cdot x' }u_1, x_{n+1}^{1-2s}(\xi'\cdot \nu) x_{n+1}^{\frac{2s-1}{2}} w)_{L^2(\partial \Omega)}.
\end{align*}
Since $x_{n+1}^{1-2s}(\xi'\cdot \nu) \rightarrow 0$ for $x \in \partial \Omega$ with $x_{n+1}\rightarrow 0$ and since $x_{n+1}^{\frac{2s-1}{2}} w = e^{\xi' \cdot x'} u_2$,
it is only active at the boundary $\partial \Omega \setminus \overline{\Sigma_1}$. Rewriting $w = e^{\xi' \cdot x'} x_{n+1}^{\frac{1-2s}{2}} u_2 = e^{\xi' \cdot x'} x_{n+1}^{\frac{1-2s}{2}} (u - u_1) $ and using the boundary conditions for $u_1$, the first term in \eqref{eq:bdry_comm2} hence turns into 
\begin{align*}
K|\xi'|^2 (e^{\xi' \cdot x'} u_1, x_{n+1}^{1-2s}(\xi' \cdot \nu) e^{\xi' \cdot x'  } u_1 )_{L^2(\partial \Omega {\setminus \overline{\Sigma_1}})}.
\end{align*}
Due to the boundary regularity, we observe that this contribution is bounded by
\begin{align}
\label{eq:bdry_dom2}
C K|\xi'|^3 \|e^{\xi' \cdot x'} u_1\|_{L^2(\partial \Omega {\setminus \overline{\Sigma_1}})}^2,
\end{align}
where $C=C(\Omega)>1$.
Using the boundary trace estimate from Lemma \ref{lem:boundary_trace} (with $\mu = |\xi'|^{\frac{1}{2}}$) we may control this by bulk contributions:
\begin{align}
\label{eq:bdry_dom2_est}
\begin{split}
K|\xi'|^3 \|e^{\xi' \cdot x'} u_1\|_{L^2(\partial \Omega {\setminus \overline{\Sigma_1}})}^2 
&\leq C K (|\xi'|^{2} \|\nabla (e^{\xi'\cdot x'} u_1)\|_{L^2(\Omega)}^2 + |\xi'|^{4} \| e^{\xi'\cdot x'} u_1\|_{L^2(\Omega)}^2)\\
& \leq C K (|\xi'|^{2} \|e^{\xi'\cdot x'} \nabla  u_1\|_{L^2(\Omega)}^2 + |\xi'|^{4} \| e^{\xi'\cdot x'} u_1\|_{L^2(\Omega)}^2)\\
& \leq C K (|\xi'|^{2} \|e^{\xi'\cdot x'}x_{n+1}^{\frac{1-2s}{2}} \nabla  u_1\|_{L^2(\Omega)}^2 + |\xi'|^{4} \| e^{\xi'\cdot x'} x_{n+1}^{\frac{1-2s}{2}} u_1\|_{L^2(\Omega)}^2).
\end{split}
\end{align}

\emph{Step 2c: Antisymmetric and symmetric terms.}
Next, we invoke the compact support of $u$ to deduce a lower bound for $A_{\phi}$: Rewriting $w= e^{\xi' \cdot x'} u_2 = e^{\xi' \cdot x'}(u-u_1)$, then the compact support of $u$ in the tangential slices yields by virtue of Poincar\'e's inequality that
\begin{align}
\label{eq:antisym}
\begin{split}
\|A_{\phi} w\|_{L^2(\Omega)}
&\geq  \|\xi' \cdot \nabla'(x_{n+1}^{\frac{1-2s}{2}}e^{x'\cdot \xi'} u)\|_{L^2(\Omega)} -  |\xi'|\|x_{n+1}^{\frac{1-2s}{2}}\nabla (e^{x'\cdot \xi'} u_1)\|_{L^2(\Omega)}\\
& \geq  C^{-1}|\xi'|\|x_{n+1}^{\frac{1-2s}{2}} e^{x'\cdot \xi'}u\|_{L^2(\Omega)} -  |\xi'|\|x_{n+1}^{\frac{1-2s}{2}}\nabla (e^{x'\cdot \xi'} u_1)\|_{L^2(\Omega)}\\
&  \geq  C^{-1}|\xi'|\|x_{n+1}^{\frac{1-2s}{2}} e^{x'\cdot \xi'}u_2\|_{L^2(\Omega)} -  |\xi'|\|x_{n+1}^{\frac{1-2s}{2}}\nabla (e^{x'\cdot \xi'} u_1)\|_{L^2(\Omega)}\\
& \quad -  |\xi'|\|x_{n+1}^{\frac{1-2s}{2}} (e^{x'\cdot \xi'} u_1)\|_{L^2(\Omega)} \\
& =  C^{-1} |\xi'|\|w\|_{L^2(\Omega)} -  |\xi'|\|x_{n+1}^{\frac{1-2s}{2}}\nabla (e^{x'\cdot \xi'} u_1)\|_{L^2(\Omega)} -   |\xi'|\|x_{n+1}^{\frac{1-2s}{2}} (e^{x'\cdot \xi'} u_1)\|_{L^2(\Omega)}\\
& \geq  C^{-1} |\xi'|\|w\|_{L^2(\Omega)} -  |\xi'| \|e^{x'\cdot \xi'} x_{n+1}^{\frac{1-2s}{2}}\nabla  u_1\|_{L^2(\Omega)} -   |\xi'|^2\|x_{n+1}^{\frac{1-2s}{2}} (e^{x'\cdot \xi'} u_1)\|_{L^2(\Omega)}.
\end{split}
\end{align}

Testing the symmetric part of the operator with $w$ itself, we further obtain that
\begin{align}
\label{eq:symm}
\begin{split}
\|x_{n+1}^{\frac{1-2s}{2}}\nabla (x_{n+1}^{\frac{2s-1}{2}} w)\|_{L^2(\Omega)} 
& \leq \|S_{\phi} w\|_{L^2(\Omega)} \|w\|_{L^2(\Omega)}\\
&  \quad + (\lim\limits_{x \rightarrow \partial \Omega} x_{n+1}^{1-2s} \p_{\nu} (x_{n+1}^{\frac{2s-1}{2}} w), x_{n+1}^{\frac{2s-1}{2}} w)_{L^2(\partial \Omega)}\\
& \leq \|S_{\phi} w\|_{L^2(\Omega)} \|w\|_{L^2(\Omega)}\\
& \quad  + (x_{n+1}^{1-2s} (\xi' \cdot \nu)(x_{n+1}^{\frac{2s-1}{2}} w), x_{n+1}^{\frac{2s-1}{2}} w)_{L^2(\partial \Omega)}\\
& = \|S_{\phi} w\|_{L^2(\Omega)} \|w\|_{L^2(\Omega)}\\
& \quad  + (x_{n+1}^{1-2s} (\xi' \cdot \nu)(x_{n+1}^{\frac{2s-1}{2}} w), x_{n+1}^{\frac{2s-1}{2}} w)_{L^2(\partial \Omega \setminus \overline{\Sigma_1})}.
\end{split}
\end{align}
We may now estimate the boundary contribution arising in these estimates as above (see \eqref{eq:bdry1}, \eqref{eq:bdry2}), as it is controlled by \eqref{eq:bdry_dominated_term}.

\medskip
\emph{Step 2d: Conclusion of the estimate for $u_2$.}

Thus, for $|\xi'|\geq 1$, combining the estimates \eqref{eq:Carl_exp}-\eqref{eq:symm}, in total, the Carleman estimate turns into 
\begin{align}
\label{eq:Carl_aux1}
\begin{split}
 &|\xi'| \|w\|_{L^2(\Omega)} + \|x_{n+1}^{\frac{1-2s}{2}}\nabla (x_{n+1}^{\frac{2s-1}{2}} w)\|_{L^2(\Omega)} \\
&\leq C( \|L_{s,\phi} w\|_{L^2(\Omega)} +  |\xi'| \|x_{n+1}^{\frac{1-2s}{2}}\nabla (e^{x'\cdot \xi'} u_1)\|_{L^2(\Omega)} +  |\xi'|^{2}\|x_{n+1}^{\frac{1-2s}{2}} (e^{x'\cdot \xi'} u_1)\|_{L^2(\Omega)})  .
\end{split}
\end{align}

Next we seek to complement \eqref{eq:Carl_aux1} with a boundary contribution on the left hand side of the Carleman inequality. To this end, we use the boundary-bulk-interpolation estimate from Lemma \ref{lem:boundary_trace_s}. This implies that
\begin{align*}
|\xi'|^{s}\|x_{n+1}^{\frac{2s-1}{2}} w\|_{L^2(\Sigma_1)} \leq C |\xi'| \|x_{n+1}^{\frac{1-2s}{2}} (x_{n+1}^{\frac{2s-1}{2}}w)\|_{L^2(\Omega)} + \|x_{n+1}^{\frac{1-2s}{2}}\nabla (x_{n+1}^{\frac{2s-1}{2}} w)\|_{L^2(\Omega)}.
\end{align*}
As a consequence, the estimate \eqref{eq:Carl_aux1} becomes
\begin{align}
\label{eq:Carl_aux2}
\begin{split}
&|\xi'|^{s}\|x_{n+1}^{\frac{2s-1}{2}} w\|_{L^2(\Sigma_1)} + |\xi'| \|w\|_{L^2(\Omega)} + \|x_{n+1}^{\frac{1-2s}{2}}\nabla (x_{n+1}^{\frac{2s-1}{2}} w)\|_{L^2(\Omega)}\\
&\leq C (\|L_{s,\phi} w\|_{L^2(\Omega)} +  |\xi'|\|e^{x'\cdot \xi'} x_{n+1}^{\frac{1-2s}{2}}\nabla  u_1\|_{L^2(\Omega)} +  |\xi'|^2 \|x_{n+1}^{\frac{1-2s}{2}} (e^{x'\cdot \xi'} u_1)\|_{L^2(\Omega)}).
\end{split}
\end{align}
Returning to $u_2$ then yields the bound
\begin{align}
\label{eq:Carl_aux3}
\begin{split}
&|\xi'|^{s}\|e^{x'\cdot \xi'} u_2\|_{L^2(\Sigma_1)} + |\xi'| \|e^{x'\cdot \xi'} x_{n+1}^{\frac{1-2s}{2}} u_2\|_{L^2(\Omega)} + \|e^{x'\cdot \xi'} x_{n+1}^{\frac{1-2s}{2}}\nabla u_2 \|_{L^2(\R^{n+1}_+)}\\
&  \leq C(\|L_{s,\phi} (e^{x'\cdot \xi'} x_{n+1}^{\frac{1-2s}{2}} u_2 )\|_{L^2(\R^{n+1}_+)} 
 +  |\xi'|\|e^{x'\cdot \xi'}  x_{n+1}^{\frac{1-2s}{2}}\nabla u_1\|_{L^2(\Omega)} +  |\xi'|^2\|x_{n+1}^{\frac{1-2s}{2}} (e^{x'\cdot \xi'} u_1)\|_{L^2(\Omega)} )\\
& = C (K |\xi'|^2 \|e^{x'\cdot \xi'} x_{n+1}^{\frac{1-2s}{2}} u_1\|_{L^2(\Omega)}
  +  |\xi'|\|x_{n+1}^{\frac{1-2s}{2}}\nabla (e^{x'\cdot \xi'} u_1)\|_{L^2(\Omega)} +  |\xi'|^2\|x_{n+1}^{\frac{1-2s}{2}} (e^{x'\cdot \xi'} u_1)\|_{L^2(\Omega)})\\
  & \leq  C (K |\xi'|^2 \|e^{x'\cdot \xi'} x_{n+1}^{\frac{1-2s}{2}} u_1\|_{L^2(\Omega)}
  +  |\xi'|\|e^{x'\cdot \xi'}  x_{n+1}^{\frac{1-2s}{2}}\nabla u_1 \|_{L^2(\Omega)}) .
\end{split}
\end{align}

Now, if $u_1 \in H^{1}(\Omega, x_{n+1}^{1-2s})$ is not $C^{0,\alpha}(\Omega)$ for some $\alpha \in (0,1)$, we simply replace $u_1$ by $u_{1,\epsilon}:=(u_1 \chi_{\Omega})\ast \varphi_{\epsilon} \in C^{0,\alpha}(\Omega)$ (where $\chi_{\Omega}$ is the characteristic function of $\Omega$ and $\varphi_{\epsilon}$ is a standard mollifier) and consider the equation \eqref{eq:u_2} with $u_1$ replaced by $u_{1,\epsilon}$. We denote the corresponding solution by $u_{2,\epsilon}$. This allows us to derive all estimates including \eqref{eq:Carl_aux3} with $u_1, u_2$ replaced by $u_{1,\epsilon}$ and $u_{2,\epsilon}$. Combining the estimate \eqref{eq:Carl_aux3}, weak lower semi-continuity and the $H^1(\Omega, x_{n+1}^{1-2s})$ regularity of $u_1$ then allows us to pass to the limit $\epsilon \rightarrow 0$. This then also yields \eqref{eq:Carl_aux3} with the functions $u_1, u_2$ (instead of $u_{1,\epsilon}, u_{2,\epsilon}$).
\medskip

\emph{Step 3: Conclusion.}
Combining the estimates from \eqref{eq:apriori11} and \eqref{eq:Carl_aux3}, by the triangle inequality, we obtain that
\begin{align}
\label{eq:Carl_comb}
\begin{split}
&|\xi'|^{s}\|e^{x'\cdot \xi'} u\|_{L^2(\Sigma_1)} + |\xi'| \|e^{x'\cdot \xi'} x_{n+1}^{\frac{1-2s}{2}} u\|_{L^2(\Omega)} + \|e^{x'\cdot \xi'} x_{n+1}^{\frac{1-2s}{2}}\nabla u \|_{L^2(\Omega)}\\
&\leq  C K |\xi'|^2 \|e^{x'\cdot \xi'} x_{n+1}^{\frac{1-2s}{2}} u_1\|_{L^2(\Omega)} + |\xi'|\|e^{x'\cdot \xi'}  x_{n+1}^{\frac{1-2s}{2}}\nabla u_1\|_{L^2(\Omega)}\\
& \quad + C \|x_{n+1}^{\frac{1-2s}{2}} e^{x' \cdot \xi'} \tilde{F}\|_{L^2(\Omega)} + C |\xi'|^{-1} \|x_{n+1}^{\frac{1-2s}{2}} e^{x' \cdot \xi'} F_0\|_{L^2(\Omega)} + 
\\ & \quad + C_\epsilon |\xi'|^{-s}  \left( \|e^{x'\cdot \xi'} g\|_{L^2(\Sigma_1)} + \|q\|_{L^{\infty}(\Sigma_1)} \|e^{x'\cdot \xi'} u\|_{L^2(\Sigma_1)} \right) \\
& \leq CK|\xi'| \|e^{ \xi' \cdot x '} x_{n+1}^{\frac{1-2s}{2}} \tilde{F} \|_{L^2(\Omega)} +  CK\|e^{ \xi' \cdot x '} x_{n+1}^{\frac{1-2s}{2}} F_0 \|_{L^2(\Omega)}
\\ & \quad+  C_{\epsilon}K |\xi'|^{1-s}( \|e^{x'\cdot \xi'}g\|_{L^2(\Sigma_1)} + \|q\|_{L^{\infty}(\Sigma_1)} \|e^{x'\cdot \xi'} u\|_{L^2(\Sigma_1)}).
\end{split}
\end{align}

Now, if $s> \frac{1}{2}$ and $|\xi'|\gg 1$ is sufficiently large (depending on $\|q\|_{L^{\infty}(\Sigma_1)}$), it is possible to absorb the boundary term involving $q$ on the right hand side into the left hand side of \eqref{eq:Carl_comb}. If $s = \frac{1}{2}$, the absorption is still possible if we assume that $\|q\|_{L^{\infty}(\Sigma_1)}$ is sufficiently small. Under these assumptions, \eqref{eq:Carl_comb} thus turns into the desired estimate \eqref{eq:Carl}.
\end{proof}

\begin{rmk}
\label{rmk:Lopatinskii}
We expect that for $s= \frac{1}{2}$ it might be possible to improve the Carleman estimate by relying on the Lopatinskii condition. For $s\in (\frac{1}{2},1)$ this is less clear. We postpone this to a future project.
\end{rmk}

As a corollary to Proposition \ref{prop:Carl} we note that the estimate \eqref{eq:Carl} remains true if in \eqref{eq:main_eq} we consider the bulk equation
\begin{align*}
\nabla \cdot x_{n+1}^{1-2s} \nabla u + V x_{n+1}^{1-2s} u & = f \mbox{ in } \Omega,
\end{align*}
with $f \in (H^1(\Omega, x_{n+1}^{1-2s}))^{\ast}$.

\begin{cor}
\label{cor:Carl}
Let $s \in [\frac{1}{2},1)$, $\xi' \in \C^n$ such that $\xi' \cdot \xi' = 0$. Assume that the same conditions as in Proposition \ref{prop:Carl} hold for $\Omega$, $q$, $f$ and $g$. Let $V \in L^{\infty}(\Omega)$ and assume that  $u \in H^1_{\partial \Omega \setminus \overline{\Sigma_1},0}(\overline{\Omega}, x_{n+1}^{1-2s})$ with $u = 0$ and $\lim\limits_{x \rightarrow \partial \Omega} x_{n+1}^{1-2s} \p_{\nu} u = 0$ on $\partial \Omega \setminus \overline{\Sigma_1}$ is a weak solution to
\begin{align}
\label{eq:main_eqV}
\begin{split}
\nabla \cdot x_{n+1}^{1-2s} \nabla u + V x_{n+1}^{1-2s} u& = f \mbox{ in } \Omega,\\
\lim\limits_{x_{n+1}\rightarrow 0} x_{n+1}^{1-2s} \p_{n+1} u + q u & = g \mbox{ on } \Sigma_1.
\end{split}
\end{align}
Then, we have
\begin{align}
\label{eq:CarlV}
\begin{split}
&|\xi'|^{s} \|e^{ \xi' \cdot x'} u\|_{L^2(\Sigma_1)} 
+ |\xi'| \|e^{ \xi' \cdot x'} x_{n+1}^{\frac{1-2s}{2}} u\|_{L^2(\Omega )} + \|e^{ \xi' \cdot x' } x_{n+1}^{\frac{1-2s}{2}} \nabla u\|_{L^2(\Omega)}\\
&\leq C (|\xi'| \|e^{ \xi' \cdot x '} x_{n+1}^{\frac{1-2s}{2}} \tilde{F} \|_{L^2(\Omega)} +  \|e^{ \xi' \cdot x '} x_{n+1}^{\frac{1-2s}{2}} F_0 \|_{L^2(\Omega)} + |\xi'|^{1-s}\|e^{ \xi' \cdot x '} g\|_{L^{2}(\Sigma_1)}).
\end{split}
\end{align}
Here the constant $C>0$ depends on $\|q\|_{L^{\infty}(\Sigma_1)}$ and $\|V\|_{L^{\infty}(\Omega)}$, while $F =(F_0, \tilde{F}) \in L^2(\R^{n+1}_+,\R^{n+2})$ is the Riesz representation of $f$, i.e., it is such that
\begin{align*}
f(v) = (v, x_{n+1}^{1-2s} F_0)_{L^2(\Omega)} + (\nabla v, x_{n+1}^{1-2s} \tilde{F})_{L^2(\Omega)} \mbox{ for all } v \in H^{1}(\Omega, x_{n+1}^{1-2s}).
\end{align*}
\end{cor}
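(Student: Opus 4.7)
The plan is to reduce to Proposition \ref{prop:Carl} by moving the potential term $V x_{n+1}^{1-2s} u$ from the left-hand side of the bulk equation into the right-hand side. More precisely, if $u$ is a weak solution of \eqref{eq:main_eqV} with data $(f,g)$, then $u$ is also a weak solution of
\begin{align*}
\nabla \cdot x_{n+1}^{1-2s} \nabla u & = \tilde f \mbox{ in } \Omega,\\
\lim\limits_{x_{n+1}\rightarrow 0} x_{n+1}^{1-2s} \p_{n+1} u + q u & = g \mbox{ on } \Sigma_1,
\end{align*}
where $\tilde f \in (H^1(\Omega, x_{n+1}^{1-2s}))^{\ast}$ is defined by
\begin{align*}
\tilde f(v) := f(v) - (v, x_{n+1}^{1-2s} V u)_{L^2(\Omega)}, \qquad v \in H^1(\Omega, x_{n+1}^{1-2s}).
\end{align*}
The Riesz representation of $\tilde f$ is then $(\tilde F_0, \tilde{\tilde F}) = (F_0 - V u, \tilde F)$.

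Next, I would apply Proposition \ref{prop:Carl} to $u$ with the right-hand side data $(\tilde f, g)$. Since the Carleman estimate in Proposition \ref{prop:Carl} is stated for test functions in the subspace $H^1_{\partial \Omega \setminus \overline{\Sigma_1},0}(\Omega, x_{n+1}^{1-2s}) \cap \mathcal{C}$, while here we only assume $u \in H^1_{\partial \Omega \setminus \overline{\Sigma_1},0}(\overline{\Omega}, x_{n+1}^{1-2s})$, I would invoke the density of $\mathcal{C}$ established in Proposition \ref{prop:dense}. Concretely, approximate $u$ by a sequence $u_k \in \mathcal{C}$ in the weighted $H^1$ topology, apply \eqref{eq:Carl} to each $u_k$ with the corresponding data (which by continuity of the associated bilinear form converges to $(\tilde f, g)$), and pass to the limit using weak lower semicontinuity on the left-hand side.

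Substituting the representation of $\tilde f$ into \eqref{eq:Carl} and using the triangle inequality on the $F_0$-contribution yields
\begin{align*}
&|\xi'|^{s} \|e^{ \xi' \cdot x'} u\|_{L^2(\Sigma_1)}
+ |\xi'| \|e^{ \xi' \cdot x'} x_{n+1}^{\frac{1-2s}{2}} u\|_{L^2(\Omega )} + \|e^{ \xi' \cdot x' } x_{n+1}^{\frac{1-2s}{2}} \nabla u\|_{L^2(\Omega)}\\
&\leq C \bigl( |\xi'| \|e^{ \xi' \cdot x '} x_{n+1}^{\frac{1-2s}{2}} \tilde{F} \|_{L^2(\Omega)} + \|e^{ \xi' \cdot x '} x_{n+1}^{\frac{1-2s}{2}} F_0 \|_{L^2(\Omega)} \\
&\qquad + \|V\|_{L^{\infty}(\Omega)} \|e^{ \xi' \cdot x'} x_{n+1}^{\frac{1-2s}{2}} u\|_{L^2(\Omega)} + |\xi'|^{1-s}\|e^{ \xi' \cdot x '} g\|_{L^{2}(\Sigma_1)} \bigr).
\end{align*}
The new term $C \|V\|_{L^{\infty}(\Omega)} \|e^{\xi'\cdot x'} x_{n+1}^{\frac{1-2s}{2}} u\|_{L^2(\Omega)}$ is of lower order in $|\xi'|$ than the corresponding term $|\xi'| \|e^{\xi'\cdot x'} x_{n+1}^{\frac{1-2s}{2}} u\|_{L^2(\Omega)}$ on the left-hand side. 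Hence, for $|\xi'| \geq 2C \|V\|_{L^{\infty}(\Omega)}$ it can be absorbed into the left, yielding \eqref{eq:CarlV}. The regime of small $|\xi'|$ is covered trivially by enlarging the constant in \eqref{eq:CarlV}, since both sides remain bounded there.

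I do not expect a substantive obstacle: the argument is a standard perturbative absorption together with a density step to relax the regularity hypothesis. The only mild subtlety is ensuring that the density step of Proposition \ref{prop:dense} interacts compatibly with the boundary term $q u$ on $\Sigma_1$; this is guaranteed by the trace estimate of Lemma \ref{lem:trace}, which makes the map $u \mapsto q u|_{\Sigma_1}$ continuous from $H^1(\Omega, x_{n+1}^{1-2s})$ to $L^2(\Sigma_1)$ (recall that $s \geq \frac{1}{2}$ so $H^s(\Sigma_1) \hookrightarrow L^2(\Sigma_1)$).
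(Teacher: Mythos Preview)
Your core argument---move $Vx_{n+1}^{1-2s}u$ to the right-hand side, apply Proposition \ref{prop:Carl}, use the triangle inequality, and absorb the resulting lower-order term for $|\xi'|$ large---is exactly the paper's proof.

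Two minor remarks. First, the density step you insert is not in the paper; the paper applies Proposition \ref{prop:Carl} directly, so the hypothesis $u\in\mathcal{C}$ is implicitly carried over (indeed the corollary is only invoked in Section \ref{sec:CGO} for functions in $\mathcal{C}$). Your density argument is plausible but a bit delicate: when you replace $u$ by $u_k$, the associated right-hand side changes as well, and you would need to check that the specific Riesz representatives $(F_0,\tilde F)$ appearing in the bound are stable under this approximation, not just the functional $f$ itself. Second, your treatment of small $|\xi'|$ is not right: if $f=g=0$ and $V\neq 0$, the right-hand side vanishes while $u$ need not (think of eigenfunctions), so one cannot simply ``enlarge the constant.'' The paper, like your main argument, only claims the estimate for $|\xi'|$ sufficiently large depending on $\|V\|_{L^\infty}$, which is all that is needed for the CGO construction.
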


\begin{proof}
The proof follows directly by a reduction to the setting of Proposition \ref{prop:Carl}. Indeed, we interpret \eqref{eq:main_eqV} as an equation of the form \eqref{eq:main_eq} with $\tilde{f} = f - x_{n+1}^{1-2s} V u$. If the Riesz representative of $f$ had been given by $F=(F_0, \tilde{F})$, the one for $\tilde{f}$ is now given by $\bar{F}=(F_0-Vu, \tilde{F})$. As a consequence, \eqref{eq:Carl} turns into 
\begin{align*}
&|\xi'|^{s} \|e^{ \xi' \cdot x'} u\|_{L^2(\Sigma_1)} 
+ |\xi'| \|e^{ \xi' \cdot x'} x_{n+1}^{\frac{1-2s}{2}} u\|_{L^2(\Omega )} + \|e^{ \xi' \cdot x' } x_{n+1}^{\frac{1-2s}{2}} \nabla u\|_{L^2(\Omega)}\\
&\leq C (|\xi'| \|e^{ \xi' \cdot x '} x_{n+1}^{\frac{1-2s}{2}} \tilde{F} \|_{L^2(\Omega)} +  \|e^{ \xi' \cdot x '} x_{n+1}^{\frac{1-2s}{2}} (F_0 -V u)\|_{L^2(\Omega)} + |\xi'|^{1-s}\|e^{ \xi' \cdot x '} g\|_{L^{2}(\Sigma_1)}).
\end{align*}
Applying the triangle inequality, we obtain
\begin{align*}
&|\xi'|^{s} \|e^{ \xi' \cdot x'} u\|_{L^2(\Sigma_1)} 
+ |\xi'| \|e^{ \xi' \cdot x'} x_{n+1}^{\frac{1-2s}{2}} u\|_{L^2(\Omega )} + \|e^{ \xi' \cdot x' } x_{n+1}^{\frac{1-2s}{2}} \nabla u\|_{L^2(\Omega)}\\
&\leq C (|\xi'| \|e^{ \xi' \cdot x '} x_{n+1}^{\frac{1-2s}{2}} \tilde{F} \|_{L^2(\Omega)} +  \|e^{ \xi' \cdot x '} x_{n+1}^{\frac{1-2s}{2}} F_0 \|_{L^2(\Omega)} +  \|V\|_{L^{\infty}(\Omega)}\|e^{ \xi' \cdot x '} x_{n+1}^{\frac{1-2s}{2}} u\|_{L^2(\Omega)} \\
& \quad + |\xi'|^{1-s}\|e^{ \xi' \cdot x '} g\|_{L^{2}(\Sigma_1)}).
\end{align*}
Now choosing $|\xi'|>1$ so large that $C \|V\|_{L^{\infty}(\Omega)} \leq \frac{1}{2}|\xi'|$, it is possible to absorb the contribution involving $V$ from the right hand side into the left hand side of the Carleman estimate. This implies the desired bound.
\end{proof}


\section{Construction of CGOs for the Generalized Caffarelli-Silvestre Extension}
\label{sec:CGO}

We shall now use estimate \eqref{eq:Carl} in order to prove the result of Proposition \ref{CGO_construction} and to thus deduce the existence of CGOs (associated with the weak form of the equation \eqref{eq:CGO_model}) by means of a duality argument.

\begin{proof}[Proof of Proposition \ref{CGO_construction}] Fix $k\in \R^{n+1}$ and consider two vectors $\zeta_1,\zeta_2 \in (k^\perp \cap e_{n+1}^\perp)$ such that $|\zeta_1|=|\zeta_2|$ and $\zeta_1\cdot\zeta_2=0$. This is possible by the assumption $n\geq 3$, since then $\dim(k^\perp \cap e_{n+1}^\perp) \geq (n+1)-2 =n-1 \geq 2$. If now we let $\xi' := \zeta_1 + i\zeta_2$, we can observe that the condition $\xi'\cdot\xi'=0$ is satisfied. One also has $\xi'\cdot k' = \xi' \cdot k =0$, the two equalities being respectively consequences of $\xi'\in e_{n+1}^\perp$ and $\xi' \in k^\perp$.
\vspace{2mm} 

Substituting the required solution $u(x)=e^{\xi' \cdot x'}(e^{ik'\cdot x' + i k_{n+1} x_{n+1}^{2s}}+r(x))$ into problem \eqref{eq:weighted_problem_a}, we are left with an equivalent problem for the function $r(x)$:
 
\begin{align} \label{eq:problem_r} \begin{split}
 \tilde{L}^s_{-\xi',V} (e^{ik'\cdot x' + i k_{n+1} x_{n+1}^{2s}} +  r)& = 0 \mbox{ in } \Omega,\\
\lim\limits_{x_{n+1}\rightarrow 0} x_{n+1}^{1-2s} \p_{n+1} (e^{ik'\cdot x' + i k_{n+1} x_{n+1}^{2s}}+r)+ q (e^{ik'\cdot x' + i k_{n+1} x_{n+1}^{2s}}+r) & = 0 \mbox{ on } \Sigma_1.
\end{split}\end{align}
Here $\tilde{L}_{-\xi', V}^s = \nabla \cdot x_{n+1}^{1-2s} \nabla + x_{n+1}^{1-2s} V + 2 x_{n+1}^{1-2s} \xi' \cdot \nabla'$.

We shall first study the following norm:
\begin{align} \label{eq:CGO_1} \begin{split}
 \| \tilde{L}^s_{-\xi',V} &e^{ik'\cdot x' + i k_{n+1} x_{n+1}^{2s}} \|_{L^2(\Omega, x_{n+1}^{2s-1})}  
 = \|  (\tilde{L}^s_{-\xi'} + x_{n+1}^{1-2s} V)(e^{ik'\cdot x' + i k_{n+1} x_{n+1}^{2s}}) \|_{L^2(\Omega, x_{n+1}^{2s-1})}
 \\ 
 & \leq  \| x_{n+1}^{\frac{1-2s}{2}}  V e^{ik'\cdot x' + i k_{n+1} x_{n+1}^{2s}} \|_{L^2(\Omega)} +  \|  \tilde L^s_{-\xi'}(e^{ik'\cdot x' + i k_{n+1} x_{n+1}^{2s}}) \|_{L^2(\Omega, x_{n+1}^{2s-1})}
 \\ 
 & \leq \|V\|_{L^\infty(\Omega)} \|  x_{n+1}^{1/2-s} \|_{L^2(\Omega)} +  \| (\nabla \cdot x_{n+1}^{1-2s} \nabla + 2x_{n+1}^{1-2s}\xi' \cdot \nabla')e^{ik'\cdot x' + i k_{n+1} x_{n+1}^{2s}} \|_{L^2(\Omega, x_{n+1}^{2s-1})}
 \\
 & = \|V\|_{L^\infty(\Omega)} \|  x_{n+1}^{1/2-s} \|_{L^2(\Omega)}+ \| (x_{n+1}^{1/2-s}|k'|^2 + (2s)^2 x_{n+1}^{3s-3/2}k_{n+1}^2)e^{ik'\cdot x' + i k_{n+1} x_{n+1}^{2s}} \|_{L^2(\Omega)}
 \\ 
 & \leq (\|V\|_{L^\infty(\Omega)} + |k'|^2)\|  x_{n+1}^{1/2-s} \|_{L^2(\Omega)} + 4s^2 k_{n+1}^2 \|  x_{n+1}^{3s-3/2} \|_{L^2(\Omega)}
 \\ 
 & \leq C_{\Omega,V,k,s} < \infty.
\end{split}\end{align}

\noindent In the last step we have used our assumption that $s\geq 1/2$ and that $\xi' \cdot k = 0$. If we define 
$$f(x):= -\tilde{L}^s_{-\xi',V} (e^{ik'\cdot x' + i k_{n+1} x_{n+1}^{2s}}),$$ 
then by \eqref{eq:CGO_1} we have proved that $\| f \|_{L^2(\Omega, x_{n+1}^{2s-1})} = O(1)$ with respect to $|\xi'|\rightarrow \infty$. Next, we compute that for almost every $x'\in\Sigma_1$
\begin{align*}
& \left|\lim\limits_{x_{n+1}\rightarrow 0} (x_{n+1}^{1-2s} \p_{n+1} e^{ik'\cdot x' + i k_{n+1} x_{n+1}^{2s}}+ q(x') e^{ik'\cdot x' + i k_{n+1} x_{n+1}^{2s}})\right| \\
& = |e^{ik'\cdot x' }||q (x')+ 2s i \,k_{n+1}| \leq C_{q,k} < \infty\;.
\end{align*}

\noindent Thus, we define 
$$g(x'):= -e^{ik'\cdot x'}(2si\,k_{n+1}+q(x')) ,$$ 
and obtain that $\|g\|_{L^2(\Sigma_1)} \leq C_{q,k} |\Sigma_1|^{1/2} = O(1)$ with respect to $|\xi'|\rightarrow \infty$. 
\vspace{2mm}

In light of the above computations, we can rewrite \eqref{eq:problem_r} as an inhomogeneous problem for $ r$:
\begin{align} 
\label{eq:problem_r2} 
\begin{split}
 \tilde{L}^s_{-\xi',V}r & = f \mbox{ in } \Omega,\\
\lim\limits_{x_{n+1}\rightarrow 0} x_{n+1}^{1-2s} \p_{n+1}  r+ q  r & = g \mbox{ on } \Sigma_1.
\end{split}
\end{align}

We will construct a solution to the problem \eqref{eq:problem_r2} with the claimed decay properties by using a duality argument and the Carleman estimate \eqref{eq:CarlV}.

\noindent 

To this end, we first recall the function space $\mathcal{C}$ from \eqref{eq:C} in Section \ref{sec:CGO_test}
which is a subvector space of $L^2(\Omega, x_{n+1}^{2s-1})$ and has the property that 
\begin{align*}
\lim\limits_{x_{n+1}\rightarrow 0} (x_{n+1}^{1-2s} \p_{n+1} w + q w)  \in L^2(\Sigma_1) \mbox{ and } \tilde{L}^s_{\xi',V} w \in L^2(\Omega, x_{n+1}^{2s-1}) \subset (H^{1}(\Omega,x_{n+1}^{1-2s}))^{\ast}
\end{align*}
and $\supp( \tilde{L}^s_{\xi',V} w ) \subset \Omega \cup (\Omega \cap \{x_{n+1}=0\})$.

We define the operator $\mathcal{B}_s: \mathcal{C} \rightarrow L^2(\Sigma_1), \ w \mapsto \lim\limits_{x_{n+1}\rightarrow 0} x_{n+1}^{1-2s} \p_{n+1} w + q w $.

We now seek to study a suitable functional which builds on the injectivity of the following mapping:
For $u \in \mathcal{C}$ consider 
\begin{align}
\label{eq:map}
(\tilde L^s_{\xi',V} u , \mathcal{B}_{s} u) \mapsto u.
\end{align}

In order to derive the injectivity of the map in \eqref{eq:map}, we invoke the Carleman estimates from Proposition \ref{prop:Carl} and Corollary \ref{cor:Carl}. To this end, we rephrase the Carleman estimate from Proposition \ref{prop:Carl} and Corollary \ref{cor:Carl} in terms of an estimate for the operators $\tilde{L}^s_{\xi',V}$ and $\mathcal{B}_s$. For $u \in \mathcal{C}$ we consider the Carleman estimate of Corollary \ref{cor:Carl} for the function $\tilde{u}:=e^{-x'\cdot \xi'} u$. This function clearly satisfies the boundary conditions stated in Corollary \ref{cor:Carl} on $\partial \Omega \setminus \overline{\Sigma_1}$. Now, if $u$ is a solution to the equation
\begin{align*}
\tilde{L}^{s}_{\xi',V} u & = f \mbox{ in } \Omega,\\
\mathcal{B}_s(u) & = g \mbox{ on } \Sigma_1,
\end{align*}
for some $f\in (H^1(\Omega,x_{n+1}^{1-2s}))^{\ast}$ and $g\in L^2(\Sigma_1)$, then the function $\tilde{u}$ satisfies an equation of the form \eqref{eq:main_eqV} with a bulk inhomogeneity $\tilde{f} = e^{-\xi' \cdot x'} f$ and a boundary inhomogeneity $\tilde{g}= e^{-\xi' \cdot x'} g$. If $(F_0, \bar{F})$ was the Riesz representative of $f$ in $(H^1(\Omega, x_{n+1}^{1-2s}))^{\ast}$, then the Riesz representative of $\tilde{f}$ is given by $(\tilde{F}_0, \tilde{\bar{F}}):=(e^{-x'\cdot \xi'}F_0 - e^{-x' \cdot \xi'} \bar{F}_{n+1}, e^{-x'\cdot \xi'} \bar{F} )$.
The Carleman estimate from Corollary \ref{cor:Carl} for $\tilde{u}$ is thus applicable and yields
\begin{align*}
&|\xi'|^{s} \|e^{ \xi' \cdot x'} \tilde{u}\|_{L^2(\Sigma_1)} 
+ |\xi'| \|e^{ \xi' \cdot x'} x_{n+1}^{\frac{1-2s}{2}} \tilde{u}\|_{L^2(\Omega)} + \|e^{ \xi' \cdot x' } x_{n+1}^{\frac{1-2s}{2}} \nabla {\tilde{u}}\|_{L^2(\Omega)}\\
&\leq C (|\xi'|\|e^{ \xi' \cdot x '} x_{n+1}^{\frac{1-2s}{2}} \tilde{\bar{F}} \|_{L^2(\Omega)} + \|e^{ \xi' \cdot x '} x_{n+1}^{\frac{1-2s}{2}} \tilde{F}_0 \|_{L^2(\Omega)} + |\xi'|^{1-s}\|e^{x'\cdot \xi'}\tilde{g}\|_{L^2(\Sigma_1)}).
\end{align*}
Using the triangle inequality, this can now be rewritten in terms of $u$, the operators $\tilde{L}^s_{\xi',V}$ and $\mathcal{B}_s$ and then becomes
\begin{align}
\label{eq:Carl22}
\begin{split}
&|\xi'|^{s} \|u\|_{L^2(\Sigma_1)} 
+ |\xi'| \|x_{n+1}^{\frac{1-2s}{2}} u\|_{L^2(\Omega )} + \| x_{n+1}^{\frac{1-2s}{2}} \nabla {u}\|_{L^2(\Omega)}\\
&\leq C (|\xi'|\|x_{n+1}^{\frac{1-2s}{2}} \bar{F} \|_{L^2(\Omega+)} + \| x_{n+1}^{\frac{1-2s}{2}} F_0 \|_{L^2(\Omega)} + |\xi'|^{1-s}\|g\|_{L^2(\Sigma_1)})\\
&\leq  C (|\xi'|\|\tilde{L}^s_{\xi',V} u\|_{(H^1(\Omega,x_{n+1}^{1-2s})^{\ast})} + |\xi'|^{1-s}\|\mathcal{B}_s(u)\|_{L^2(\Sigma_1)}).
\end{split}
\end{align}
As a result, we infer that the map \eqref{eq:map} is injective.

Building on this observation, we obtain that the linear functional 
 $$ T: \tilde{L}^s_{\xi',V} (\mathcal{C}) \times \mathcal{B}_s(\mathcal{C}) \rightarrow \R, \ (\tilde L^s_{\xi',V} u , \mathcal{B}_{s} u)  \mapsto (u,f)_{L^2(\Omega)} +  (u,g)_{L^2(\Sigma_1)} $$
 \noindent is well defined.

Moreover, using \eqref{eq:Carl22}, the bound
 \begin{align*} 
 \begin{split}
&| (u,f)_{L^2(\Omega)} +  (u,g)_{L^2(\Sigma_1)}  |  \leq \| u \|_{L^2(\Omega,x_{n+1}^{1-2s})} \| f \|_{L^2(\Omega,x_{n+1}^{2s-1})} + \| u \|_{L^2(\Sigma_1)}  \| g \|_{L^2(\Sigma_1)} \\ 
& \leq C_{\Omega,V,k,s}  \| u \|_{L^2(\Omega,x_{n+1}^{1-2s})}  + C_{q,k,\Sigma_1} \| u  \|_{L^2(\Sigma_1)} \\ & \leq ( C_{\Omega,V,k,s}|\xi'|^{-1} +  C_{q,k,\Sigma_1}|\xi'|^{-s})(\||\xi'| \tilde{F} \|_{L^2(\Omega,x_{n+1}^{1-2s})} + \|F_0\|_{L^2(\Omega, x_{n+1}^{1-2s})}  + \||\xi'|^{1-s} \mathcal{B}_s(u)\|_{L^2(\Sigma_1)} ) \\  
& \leq c (|\xi'|^{-1} + |\xi'|^{-s}) (\| \tilde{L}^s_{\xi',V} u \|_{(H^1_{sc}(\Omega,x_{n+1}^{1-2s}))^{\ast}}+ \|\mathcal{B}_s u\|_{L^2_{sc}(\Sigma_1)}).
\end{split}\end{align*}
\noindent holds for a constant $c= c_{\Omega,\Sigma_1,k,V,q}$. Here $\tilde{L}^s_{\xi',V} u = \nabla\cdot \tilde{F} + F_0$ in the sense of distributions. The subscript denotes the use of semiclassical norms with $|\xi'|^{-1}$ as a small parameter, i.e.
\begin{align*}
\| \tilde{L}^s_{\xi',V} u \|_{(H^1_{sc}(\Omega,x_{n+1}^{1-2s}))^{\ast}} & := \||\xi'| \tilde{F} \|_{L^2(\Omega,x_{n+1}^{1-2s})} + \|F_0\|_{L^2(\Omega, x_{n+1}^{1-2s})},\\
\|\mathcal{B}_s u\|_{L^2_{sc}(\Sigma_1)}&:= \||\xi'|^{1-s} \mathcal{B}_s u\|_{L^2(\Sigma_1)} .
\end{align*}
As a consequence, as a functional on a subset of $(H^{1}_{sc}(\Omega, x_{n+1}^{1-2s}))^{\ast} \times L^2_{sc}(\Sigma_1)$, we have  $\|T\| = O(|\xi'|^{-s})$ for $|\xi'|\rightarrow \infty$. Since for $s \in [\frac{1}{2},1)$ the vector space
$\tilde{L}^s_{\xi', V}(\mathcal{C}) \times \mathcal{B}_s(\mathcal{C})$ is a subvector space of $(H^1_{sc}(\Omega, x_{n+1}^{1-2s}))^{\ast} \times L^2_{sc}(\Sigma_1)$,
by the Hahn-Banach theorem, the functional $T$ can be extended to act on all of $(H^1_{sc}(\Omega, x_{n+1}^{1-2s}))^{\ast}\times L^2_{sc}(\Sigma_1)$ while maintaining the same norm. 

Making use of the Riesz representation theorem, we find some $\tilde r_1 \in (H^1_{sc}(\Omega,x_{n+1}^{1-2s}))^*$ and $\tilde r_2 \in L^2_{sc}(\Sigma_1)$ such that for every choice of $v=(v_1,v_2)\in (H^1_{sc}(\Omega,x_{n+1}^{1-2s}))^{\ast}\times L^2_{sc}(\Sigma_1)$ it holds that
\begin{align*}
T(v_1,v_2) = (v_1,\tilde r_1)_{(H^1_{sc}(\Omega,x_{n+1}^{1-2s}))^*} &+ (v_2,\tilde r_2)_{L^2_{sc}(\Sigma_1)}\;,\\
\|\tilde r_1\|_{(H^{1}_{sc}(\Omega,x_{n+1}^{1-2s}))^*} + \|\tilde r_2\|_{L^2_{sc}(\Sigma_1)} =& \|T\|=O(|\xi'|^{-s}) .
\end{align*}

However, if we let $r_1$ be the Riesz representative of $\tilde r_1$ in $H^1_{sc}(\Omega,x_{n+1}^{1-2s})$ and define $r_2:= |\xi'|^{2-2s}\tilde r_2$, we can compute 
\begin{align*}
T(v_1,v_2) = (v_1,\tilde r_1)_{(H^1_{sc}(\Omega,x_{n+1}^{1-2s}))^*} + (v_2,|\xi'|^{2-2s}\tilde r_2)&_{L^2(\Sigma_1)} = \langle v_1,r_1 \rangle + (v_2,r_2)_{L^2(\Sigma_1)}\;, \\ 
|\xi'|^{s-1} \|r_2\|_{L^2(\Sigma_1)} = |\xi'|^{s-1} \||\xi'|^{2-2s}\tilde r_2\|_{L^2(\Sigma_1)}& =  \||\xi'|^{1-s}\tilde r_2\|_{L^2(\Sigma_1)} = \|\tilde r_2\|_{L^2_{sc}(\Sigma_1)}\;, \\ 
\|r_1\|_{H^{1}_{sc}(\Omega,x_{n+1}^{1-2s})} =& \|\tilde r_1\|_{(H^{1}_{sc}(\Omega,x_{n+1}^{1-2s}))^*}\;,
\end{align*}
where $\langle \cdot , \cdot \rangle$ denotes the $(H^1_{sc}(\Omega, x_{n+1}^{1-2s}))^{\ast}$, $H^1_{sc}(\Omega, x_{n+1}^{1-2s})$ duality pairing.
This eventually gives
\begin{align}
\label{eq:CGO_estimates}
\begin{split}
T(v_1,v_2) &= \langle v_1,r_1 \rangle + (v_2,r_2)_{L^2(\Sigma_1)}\,, \\
\|r_1\|_{L^2(\Omega, x_{n+1}^{1-2s})} &+ |\xi'|^{-1}\|\nabla r_1\|_{L^2(\Omega, x_{n+1}^{1-2s})}
+ |\xi'|^{s-1}\|r_2\|_{L^2(\Sigma_1)} = \\ & = \|r_1\|_{H^{1}_{sc}(\Omega,x_{n+1}^{1-2s})} + |\xi'|^{s-1}\|r_2\|_{L^2(\Sigma_1)}   \\ & =  \|\tilde r_1\|_{(H^{1}_{sc}(\Omega,x_{n+1}^{1-2s}))^*} + \|\tilde r_2\|_{L^2_{sc}(\Sigma_1)} = O(|\xi'|^{-s})\;.
\end{split}
\end{align}  
Using that $ L^2_{sc}(\Omega, x_{n+1}^{2s-1}) \subset (H^1_{sc}(\Omega, x_{n+1}^{1-2s}))^{\ast}$ with the identification that the functional $\ell_{v_1}$ associated with $v_1 \in L^2_{sc}(\Omega, x_{n+1}^{2s-1})$ is given by 
\begin{align*}
\ell_{v_1}(f):=(v_1, f)_{L^2(\Omega)} \mbox{ for } f\in L^2_{sc}(\Omega,x_{n+1}^{1-2s}),
\end{align*}
we have that for $v_1 \in L^2_{sc}(\Omega, x_{n+1}^{2s-1})$
\begin{align}
\label{eq:id}
\langle v_1, r_1 \rangle 
:= \langle \ell_{v_1}, r_1 \rangle
= (v_1, r_1)_{L^2(\Omega)}.
\end{align}

Integrating by parts, we next deduce the equations satisfied by $r_1, r_2$. Formally this follows by integrating the equations by parts twice and then inserting suitable test functions. Since a priori no weighted second derivatives of $r_1, r_2$ are given, we need to argue more carefully. To this end, recalling \eqref{eq:id}, we compute for $u \in \mathcal{C}$ with $u=\p_\nu u=0$ on $\Sigma_2$
\begin{align} \label{eq:CGO_3}
\begin{split}
(u,f)&_{L^2(\Omega)} +   (u,g)_{L^2(\Sigma_1)}  = T(\tilde{L}^s_{\xi',V} u, \mathcal{B}_s(u)) \\
 & =  (\tilde{L}^s_{\xi'} u, r_1)_{L^2(\Omega)} +  (x_{n+1}^{1-2s} V u, r_1)_{L^2(\Omega)} + (\mathcal{B}_s(u), r_2)_{L^2(\Sigma_1)}\\
 & = (x_{n+1}^{1-2s} \nabla u,  \nabla r_1)_{L^2(\Omega)}
 - 2(x_{n+1}^{1-2s} \xi'\cdot \nabla' u, r_1)_{L^2(\Omega)} +  (x_{n+1}^{1-2s} V u, r_1)_{L^2(\Omega)}\\
 & \quad + (\mathcal{B}_s(u), r_2-r_1)_{L^2(\Sigma_1)} + (qu, r_1)_{L^2(\Sigma_1)}.
\end{split}
\end{align}

As a consequence, considering $u \in C_c^{\infty}(\Omega)$ we infer that the function $r_1$ is a weak solution to the bulk equation
\begin{align*}
\tilde{L}^s_{\xi',V} r_1 = f \mbox{ in } \Omega
\end{align*}
and 
\begin{align}
\label{eq:bulk_one}
\begin{split}
(u,f)&_{L^2(\Omega)}
= (x_{n+1}^{1-2s} \nabla u,  \nabla r_1)_{L^2(\Omega)}
 - 2(x_{n+1}^{1-2s} \xi'\cdot \nabla' u, r_1)_{L^2(\Omega)} +  (x_{n+1}^{1-2s} V u, r_1)_{L^2(\Omega)}
\end{split}
\end{align}
for all $u \in C_c^{\infty}(\Omega)$.
Next, by an approximation result which uses the fact that $s\geq \frac{1}{2}$, we obtain that the identity \eqref{eq:bulk_one},
which a priori only holds for $u\in C_c^{\infty}(\Omega)$, also remains true for $u \in x_{n+1}^{2s} C_c^{\infty}(\overline{\Omega})$. Combining this with \eqref{eq:CGO_3}, thus implies in turn that for $u\in x_{n+1}^{2s} C_c^{\infty}(\overline{\Omega})$ we have the following boundary equation
\begin{align} 
\label{eq:CGO_boundary_one}
\begin{split}
(u,g)_{L^2(\Sigma_1)} 
 & =   (\mathcal{B}_s(u), r_2-r_1)_{L^2(\Sigma_1)} + (qu, r_1)_{L^2(\Sigma_1)}.
\end{split}
\end{align}

Using this observation, we now consider a suitable test function to deduce further information from \eqref{eq:CGO_boundary_one}:
Let $h\in C^\infty_c (\Sigma_1)$ and consider an open set $\overline{\Sigma}$ such that supp$(h)\subset \overline\Sigma\subset \Sigma_1$. Let $\epsilon > 0$ be so small that $\overline\Sigma \times (0,\epsilon) \subset\subset \Omega $ and consider $\psi \in C^\infty_c(\bar\Omega)$ such that $\psi(x) = 1$ if $x\in$ supp$(h) \times [0, \epsilon/2)$ and $\psi(x)=0$ if $x\not\in\overline\Sigma \times [0,\epsilon]$. Finally, let $u(x)=x_{n+1}^{2s}\psi(x)h(x')$.
\vspace{2mm}

Observe that since supp$(u) \subset \Sigma_1 \times (0,\epsilon) \subset\subset \Omega$ we have $u=\p_\nu u = 0$ on $\Sigma_2$. Moreover, since $\psi h \in C^\infty_c (\overline\Omega)$, we have $u\in x_{n+1}^{2s}C^\infty_c(\overline\Omega)$. Thus, $u$ is a valid test function. We can compute

$$ \mathcal B_s{u} = \lim_{x_{n+1}\rightarrow 0} x_{n+1}^{1-2s}\p_{n+1}u+qu =  h(x') \lim_{x_{n+1}\rightarrow 0} x_{n+1}^{1-2s}\p_{n+1}(\psi(x) x_{n+1}^{2s}) = 2s\, h(x')$$

\noindent by the properties of $\psi$. Also, $u(x)=0$ if $x\in\Sigma_1$. Thus, \eqref{eq:CGO_boundary_one} is reduced to

$$ 0= ( \mathcal B_s{u},r_2-r_1)_{L^2(\Sigma_1)} = 2s (h, r_2-r_1)_{L^2(\Sigma_1)}\;,  $$

\noindent which implies $r_1=r_2$ in $\Sigma_1$ by the arbitrary choice of $h$. 

\noindent 
As a consequence, this implies that $r_1$ satisfies the equation
\begin{align*}
(u,f)_{L^2(\Omega)} + (u, g)_{L^2(\Sigma_1)}
& = -(x_{n+1}^{1-2s} \nabla u, \nabla r_{1})_{L^2(\Omega)} + 2( x_{n+1}^{1-2s}\xi' \cdot \nabla'u, r_{1})_{L^2(\Omega)}\\
& \quad
+ (x_{n+1}^{1-2s} V  u, r_{1})_{L^2(\Omega)} + (q u, r_1)_{L^2(\Sigma_1)},
\end{align*}
for all $u \in \mathcal{C}$. Now by density of $\mathcal{C}$ in $H^1(\Omega, x_{n+1}^{1-2s})$ (see Proposition \ref{prop:dense}), this exactly corresponds to $r_1$ being a weak solution of the equation
\begin{align*}
\tilde{L}_{-\xi,V}^s r_1 & = f \mbox{ in } \Omega,\\
\lim\limits_{x_{n+1}\rightarrow 0} x_{n+1}^{1-2s} \p_{n+1} r_1 + q r_1 & = g \mbox{ on } \Sigma_1.
\end{align*}
Finally, we recall that since we proved that $r_1=r_2$ in $\Sigma_1$, formula \eqref{eq:CGO_estimates} now reads
$$\|r_1\|_{L^2(\Omega,x_{n+1}^{1-2s})} + |\xi'|^{-1} \|\nabla r_1\|_{L^2(\Omega, x_{n+1}^{1-2s})} + |\xi'|^{s-1}\|r_1\|_{L^2(\Sigma_1)} = \|T\|=O(|\xi'|^{-s})\,,$$
which yields the desired correction function $r:=r_1$ and the claimed estimates.
\end{proof}


With the construction of CGO solutions to \eqref{eq:CGO_model} in hand, we now turn to the associated inverse problem. Arguing as in Section \ref{sec:well_posed1}, it is possible to prove the well-posedness of the weak formulation of the problem \eqref{eq:CGO_model} outside of a discrete set of eigenvalues. More precisely, to obtain this we consider the associated bilinear form 
\begin{align*}
\tilde{B}_{q,V}(u,v):= \int\limits_{\Omega} x_{n+1}^{1-2s} \nabla u \cdot \nabla v dx  + \int\limits_{\Omega} V x_{n+1}^{1-2s} u v dx + \int\limits_{\Sigma_1} qu v dx',
\end{align*}
for $u,v \in H^{1}(\Omega, x_{n+1}^{1-2s})$. Further we investigate the Dirichlet problem \eqref{eq:CGO_model} for data $f$ belonging to the abstract space
\begin{align*}
R:= H^{1}(\Omega, x_{n+1}^{1-2s})/  H^{1}_{\Sigma_2,0}(\Omega, x_{n+1}^{1-2s}),
\end{align*}
\noindent endowed with the usual quotient topology
$$ \| f \|_R := \inf_{u \in f} \left\{ \|u\|_{H^{1}(\Omega, x_{n+1}^{1-2s})} \right\}. $$
\noindent This choice is motivated by the the observation that for all $u,v \in H^{1}(\Omega, x_{n+1}^{1-2s})$ we have for the corresponding remainder classes $[u], [v] \in R$
$$ [u] = [v] \quad \Leftrightarrow \quad u|_{\Sigma_2}=v|_{\Sigma_2}\;, $$
\noindent and thus the equivalence classes of $R$ can be interpreted as restrictions on $\Sigma_2$ of functions belonging to $H^{1}(\Omega, x_{n+1}^{1-2s})$. In view of this interpretation, one can make sense of the assertion $u|_{\Sigma_2} = f$, with $u\in H^1(\Omega,x_{n+1}^{1-2s})$ and $f\in R$, as equivalent to $u\in f$. Moreover, by the properties of the infimum for all $f\in R$ with $\|f\|_R > 0$ and $\epsilon > 0$, we can find $u\in H^1(\Omega, x_{n+1}^{1-2s})$ with $u|_{\Sigma_2} = f$ such that 
$$ \|u\|_{H^{1}(\Omega, x_{n+1}^{1-2s})} \leq \|f\|_{R}+\epsilon.$$
\noindent By just choosing $\epsilon \leq \|f\|_R$ we deduce that for all boundary data $f$ on $\Sigma_2$ there exists an extension $E_s(f) \in {H^{1}(\Omega, x_{n+1}^{1-2s})}$ such that 
$$  \|E_s(f)\|_{H^{1}(\Omega, x_{n+1}^{1-2s})} \leq 2\|f\|_{R}.$$
\noindent This lets us argue similarly as in Section \ref{sec:well_posed1}, and we obtain analogous well-posedness results.

 We denote the dual space of $R$ by $R^{\ast}$. In the following we assume that zero is not a Dirichlet eigenvalue and thus define for $f \in R$ a Dirichlet-to-Neumann operator $\tilde{\Lambda}_{q,V}: R \rightarrow R^{\ast}$ by setting
\begin{align*}
\langle \tilde{\Lambda}_{s,q,V} f , g \rangle_{R^{\ast}, R} = 
B_{q,V}(u_f, E_s g).
\end{align*}
Here $E_s g$ denotes a $H^{1}(\Omega, x_{n+1}^{1-2s})$ extension of the function $g \in R$. 
Relying on similar arguments as for the Dirichlet-to-Neumann maps studied in Section \ref{sec:well_posed1}, the map $\tilde{\Lambda}_{s,q,V}$ is continuous from $R$ into $R^{\ast}$.

With the CGO solutions available, we can now address the proof of Theorem \ref{thm:unique}.
Indeed, with the given special solutions, the solution to our inverse problem now follows from the Alessandrini identity.

\begin{proof}[Proof of Theorem \ref{thm:unique}]
Let $V:=V_1-V_2$ and $q:=q_1-q_2$. The assumption that $\Lambda_1 = \Lambda_2$ and the Alessandrini identity from Lemma \ref{lem:Aless} allow us to write that, for any solutions $u_1, u_2$ to \eqref{eq:Schroedinger},

$$ \int_{\mathbb R^{n+1}} \chi_{\Omega}V u_1 \overline{u_2} x_{n+1}^{1-2s} dx + \int_{\mathbb R^n} \chi_{\Sigma_1} q u_1  \overline{u_2} dx' =0\,.$$

We shall test this identity using our special CGO solutions. Fix $\xi, k$ as in Proposition \ref{CGO_construction} and let
\begin{align*}\begin{split}
u_1(x) &:= e^{\xi'\cdot x'}(e^{(ik'\cdot x' + i k_{n+1}x_{n+1}^{2s})/2}+r_1(x))\;, \\
u_2(x) &:= e^{\tilde{\xi}'\cdot x'}(e^{-(ik'\cdot x' + i k_{n+1}x_{n+1}^{2s})/2}+r_2(x))\;.
\end{split}\end{align*}
Here if $\xi' = \zeta_1 + i \zeta_2$, we set $\tilde{\xi'}:= -\zeta_1 + i \zeta_2$.
Substituting these into the above identity gives rise to
\begin{align*}\begin{split}
0 &= \int_{\mathbb R^{n+1}} \chi_{\Omega}V x_{n+1}^{1-2s} \left( r_1r_2 + (r_1+r_2)e^{(ik'\cdot x' + i k_{n+1}x_{n+1}^{2s})/2} + e^{ik'\cdot x' + i k_{n+1}x_{n+1}^{2s}} \right) dx + \\ 
& \quad +\int_{\mathbb R^n} \chi_{\Sigma_1} q  \left( r_1r_2 + (r_1+r_2)e^{ik'\cdot x'/2} + e^{ik'\cdot x'} \right) dx' \;.
\end{split}\end{align*}

We now aim to estimate the terms involving $r_1$ and $r_2$, showing that they can be dropped in the limit $|\xi'| \rightarrow \infty$. Recall from Proposition  \ref{CGO_construction} that
$$ \|r_j\|_{L^2(\Omega, x_{n+1}^{1-2s})} + \|r_j\|_{L^2(\Sigma_1)} = O(|\xi'|^{1-2s})$$
for $j=1,2$, and thus since $s\in(1/2,1)$ we have both $\|r_j\|_{L^2(\Omega, x_{n+1}^{1-2s})}\rightarrow 0$ and $\|r_j\|_{L^2(\Sigma_1)}\rightarrow 0$ as $|\xi'|\rightarrow \infty$. Therefore,
\begin{align*}\begin{split} 
\int_{\mathbb R^{n+1}} |\chi_{\Omega}V x_{n+1}^{1-2s} r_1r_2| dx & \leq \|V\|_{L^\infty(\Omega)}\|r_1\|_{L^2(\Omega, x_{n+1}^{1-2s})}\|r_2\|_{L^2(\Omega, x_{n+1}^{1-2s})} \rightarrow 0\;, \\
 \int_{\mathbb R^{n+1}} |e^{(ik'\cdot x' + i k_{n+1}x_{n+1}^{2s})/2} \chi_{\Omega}V x_{n+1}^{1-2s} r_1| dx & \leq \|V\|_{L^\infty(\Omega)}\|r_1\|_{L^2(\Omega, x_{n+1}^{1-2s})}\|x_{n+1}^{1/2-s}\|_{L^2(\Omega)} \rightarrow 0\;, \\
 \int_{\mathbb R^n} |\chi_{\Sigma_1} q  r_1r_2| dx' & \leq \|q\|_{L^\infty(\Sigma_1)}\|r_1\|_{L^2(\Sigma_1)}\|r_2\|_{L^2(\Sigma_1)} \rightarrow 0\;, \\
 \int_{\mathbb R^n} |e^{ik'\cdot x'/2}\chi_{\Sigma_1} q  r_1| dx' & \leq \|q\|_{L^\infty(\Sigma_1)}\|r_1\|_{L^2(\Sigma_1)}|\Sigma_1|^{1/2} \rightarrow 0,
\end{split}\end{align*}
as $|\xi'|\rightarrow \infty$, and similarly for the remaining terms. The Alessandrini identity is thus reduced to
$$ \int_{\mathbb R^{n+1}} \chi_{\Omega}V x_{n+1}^{1-2s}  e^{ik'\cdot x' + i k_{n+1}x_{n+1}^{2s}} dx + \int_{\mathbb R^n} \chi_{\Sigma_1} q  e^{ik'\cdot x'} dx' =0\;,$$
which after the change of variables $(y',y_{n+1}) = (x', x_{n+1}^{2s})$ in the first integral takes the form
\begin{equation}\label{Alex_2}
\int_{\mathbb R^{n+1}} \left(\frac{\chi_{\Omega}V}{2s}\right)(y',y_{n+1}^{1/2s}) y_{n+1}^{1/s-2} e^{ik\cdot y}dy + \int_{\mathbb R^n} \chi_{\Sigma_1} q  e^{ik'\cdot x'} dx' =0\;.
\end{equation}
\vspace{3mm}

Let $\mathcal{S} (\mathbb R^{n+1})$ and $\mathcal{S}' (\mathbb R^{n+1})$ respectively be the sets of Schwartz functions and tempered distributions over $\mathbb R^{n+1}$. Consider $\delta_{x_{n+1}}(0) \in \mathcal{S}' (\mathbb R^{n+1})$ defined by $$ \langle\delta_{x_{n+1}}(0), \phi\rangle = \int_{\mathbb R^n} \phi((x',0)) dx' $$  
for all $\phi \in \mathcal{S} (\mathbb R^{n+1})$. Then 
\begin{align*}
f(x) := \left(\frac{\chi_{\Omega}V}{2s}\right)(x',x_{n+1}^{1/2s}) x_{n+1}^{1/s-2} {\chi_{[0,\infty)}(x_{n+1})} + \delta_{x_{n+1}}(0) (\chi_{\Sigma_1} q)(x')
\end{align*}
where $\chi_{[0,\infty)}(x_{n+1})$ denotes the characteristic function of $[0,\infty)$ is also a tempered distribution, since for all $\phi \in \mathcal{S} (\mathbb R^{n+1})$ we have
\begin{align*}\begin{split}
|\langle f,\phi \rangle| & = \left| \int_{\mathbb R^{n+1}} \left(\frac{\chi_{\Omega}V}{2s}\right)(x',x_{n+1}^{1/2s}) x_{n+1}^{1/s-2}\chi_{[0,\infty)}(x_{n+1}) \phi(x) dx  +  \int_{\mathbb R^n} (\chi_{\Sigma_1} q)(x') \phi((x',0)) dx' \right| \\
& \leq \frac{\|V\|_{L^{\infty}(\Omega)}}{2s} \int_{\Omega} x_{n+1}^{1/s-2} |\phi(x)| dx + \|q\|_{L^\infty(\Sigma_1)} \int_{\Sigma_1} |\phi((x',0))| dx' \\
& \leq \|\phi\|_{L^\infty} \left(\frac{\|V\|_{L^{\infty}(\Omega)}}{2s} \int_{\Omega} x_{n+1}^{1/s-2} dx + \|q\|_{L^\infty(\Sigma_1)} |\Sigma_1| \right) < \infty\;.
\end{split}\end{align*}

The Fourier transform of $f$ belongs to $\mathcal{S}' (\mathbb R^{n+1})$ as well, and by definition it is the tempered distribution given by
\begin{align*}\begin{split}
\langle \hat f, \phi \rangle & = \langle \mathcal{F}\left[ \left(\frac{\chi_{\Omega}V}{2s}\right)(x',x_{n+1}^{1/2s}) x_{n+1}^{1/s-2} {\chi_{[0,\infty)}(x_{n+1})} \right](k),\phi(k) \rangle + \langle \delta_{x_{n+1}}(0) (\chi_{\Sigma_1} q)(x') , \hat\phi(x) \rangle \\ 
& =  \langle \mathcal{F}\left[ \left(\frac{\chi_{\Omega}V}{2s}\right)(x',x_{n+1}^{1/2s}) x_{n+1}^{1/s-2} {\chi_{[0,\infty)}(x_{n+1})}  \right](k),\phi(k) \rangle + \int_{\mathbb R^n} (\chi_{\Sigma_1} q)(x') \hat\phi ((x',0)) dx' \\
& = \int_{\mathbb R^{n+1}} \phi(k)\int_{\mathbb R^{n+1}} \left(\frac{\chi_{\Omega}V}{2s}\right)(x',x_{n+1}^{1/2s}) x_{n+1}^{1/s-2} {\chi_{[0,\infty)}(x_{n+1})} e^{ix\cdot k}dx dk \\
& \quad + \int_{\mathbb R^n} (\chi_{\Sigma_1} q)(x') \int_{\mathbb R^{n+1}} \phi(k) e^{ik'\cdot x'} dk dx' \\
& = \int_{\mathbb R^{n+1}} \phi(k) \left( \int_{{\R^{n+1}_+}} \left(\frac{\chi_{\Omega}V}{2s}\right)(x',x_{n+1}^{1/2s}) x_{n+1}^{1/s-2}   e^{ik\cdot x}dx + \int_{\mathbb R^n} (\chi_{\Sigma_1} q)(x')   e^{ik'\cdot x'} dx' \right) dk
\end{split}\end{align*}
for all $\phi \in \mathcal{S} (\mathbb R^{n+1})$, where for convenience of notation, we both use the notation $\hat{f}$ and $\F f$ to denote the Fourier transform. By \eqref{Alex_2} the last expression vanishes, which proves that $\hat f =0$. Now the Fourier inversion theorem for tempered distributions allows us to deduce that $\langle f,\phi \rangle =0$ for every $\phi \in \mathcal{S} (\mathbb R^{n+1})$. Testing this equality with an arbitrary function $\phi\in C^\infty_c(\Omega)$ we get
\begin{align*}
0 & = \langle \left(\frac{\chi_{\Omega}V}{2s}\right)(x',x_{n+1}^{1/2s}) x_{n+1}^{1/s-2} {\chi_{[0,\infty)}(x_{n+1})} + \delta_{x_{n+1}}(0) (\chi_{\Sigma_1} q)(x'),\phi\rangle\\
& = \int_{{\R^{n+1}_+}} \left(\frac{\chi_{\Omega}V}{2s}\right) x_{n+1}^{1/s-2} \phi \,dx\;,
\end{align*}
which by the arbitrary choice of $\phi$ implies $V=0$ in $\Omega$, and we are left with $f(x)= \delta_{x_{n+1}}(0) (\chi_{\Sigma_1} q)(x')$. 

Let now $\psi \in C^\infty_c(\Sigma_1)$, and consider $\eta \in C^\infty(\mathbb R)$ such that $\eta(x)=1$ if $x\in (-1,1)$ and $\eta(x)=0$ if $x\not\in(-2,2)$. Since it belongs to $C^\infty_c(\mathbb R^{n+1})$, the function $\phi(x) := \psi(x')\eta(x_{n+1})$ is a suitable test function for $\langle f,\phi \rangle =0$, and by using it we obtain

$$ 0= \langle f,\phi\rangle = \langle \delta_{x_{n+1}}(0) (\chi_{\Sigma_1} q)(x'), \psi(x')\eta(x_{n+1}) \rangle = \int_{\mathbb R^n} \chi_{\Sigma_1} q \psi \,dx'\;.$$
Eventually, by the arbitrary choice of $\psi$ we conclude that $q=0$ in $\Sigma_1$.
\end{proof}

As a corollary of this argument we remark that while for $s=\frac{1}{2}$ with the described method we cannot simultaneously prove uniqueness for the potentials $q$ and $V$ (due to the lack of the decay of $r$ on the boundary), this method still allows us to prove uniqueness for $V$ given a fixed potential $q$: 

\begin{cor}
\label{cor:unique}
Let $\Omega \subset \mathbb R^{n+1}_+$, $n\geq 2$, be an open, bounded and smooth domain. Assume that $\Sigma_1 := \p\Omega \cap \{x_{n+1}=0\}$ and $\Sigma_2 \subset \p\Omega\setminus\Sigma_1$ are two relatively open, non-empty subsets of the boundary such that $\overline{\Sigma_1\cup\Sigma_2}=\p\Omega$. Let $s= \frac{1}{2}$. If the potentials $q \in L^{\infty}(\Sigma_1)$ and $V_1, V_2 \in L^{\infty}(\Omega)$ relative to problem \eqref{eq:CGO_model} are such that $$\Lambda_1:= \Lambda_{s,V_1, q}=\Lambda_{s,V_2,q}=:\Lambda_2\;,$$ 
\noindent then $V_1 = V_2$.
\end{cor}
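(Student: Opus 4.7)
The plan is to follow the strategy used for Theorem \ref{thm:unique}, but to exploit the crucial simplification that, since $q_1 = q_2 = q$, the boundary contribution in Alessandrini's identity vanishes identically. Thus the weak $O(1)$ boundary decay of the CGO corrector $r$ at $s = 1/2$---which was exactly what obstructed the simultaneous recovery of $V$ and $q$---is irrelevant here, and only the $O(|\xi'|^{-1/2})$ bulk decay matters.

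First, I would apply Lemma \ref{lem:Aless} with $A_1 = A_2 = 0$, $s = 1/2$, and $q_1 = q_2$. Since $x_{n+1}^{1-2s} = 1$ and $q_1 - q_2 = 0$, the identity reduces to
\begin{equation*}
  0 = \int_\Omega (V_1 - V_2)\, u_1 \overline{u_2}\, dx
\end{equation*}
for all weak solutions $u_j$ of \eqref{eq:CGO_model} associated with the potentials $(V_j, q)$. Next, invoking Proposition \ref{CGO_construction} together with Remark \ref{rmk:s1/2}---which permits working with $\xi' \in \C^{n+1}$ under the hypothesis $n \geq 2$---I would fix $k \in \R^{n+1}$ and choose $\zeta_1, \zeta_2 \in k^\perp \subset \R^{n+1}$ with $|\zeta_1| = |\zeta_2|$ and $\zeta_1 \cdot \zeta_2 = 0$. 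This is possible because $\dim k^\perp \geq n \geq 2$. Setting $\xi' := \zeta_1 + i \zeta_2$ and $\tilde{\xi}' := -\zeta_1 + i \zeta_2$, one checks $\xi' \cdot \xi' = \tilde{\xi}' \cdot \tilde{\xi}' = 0$ and $\xi' \cdot k = \tilde{\xi}' \cdot k = 0$, so the proposition provides CGOs
\begin{equation*}
  u_1(x) = e^{\xi' \cdot x}\bigl(e^{i k \cdot x /2} + r_1(x)\bigr), \qquad u_2(x) = e^{\tilde{\xi}' \cdot x}\bigl(e^{-i k \cdot x / 2} + r_2(x)\bigr),
\end{equation*}
with $\|r_j\|_{L^2(\Omega)} = O(|\xi'|^{-1/2})$ as $|\xi'| \to \infty$.

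Finally, I would substitute these CGOs into the reduced Alessandrini identity. Since $\xi' + \overline{\tilde{\xi}'} = 0$ by construction, the exponential prefactors cancel, leaving
\begin{equation*}
  0 = \int_\Omega (V_1 - V_2)\bigl(e^{i k \cdot x} + e^{i k \cdot x /2}\, \overline{r_2} + r_1\, e^{-i k \cdot x /2} + r_1 \overline{r_2}\bigr)\, dx.
\end{equation*}
By Cauchy--Schwarz and the boundedness of $V_1 - V_2 \in L^\infty(\Omega)$, the last three terms are $O(|\xi'|^{-1/2})$ as $|\xi'| \to \infty$, so passing to the limit yields $\int_\Omega (V_1 - V_2)\, e^{i k \cdot x}\, dx = 0$ for every $k \in \R^{n+1}$. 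Fourier inversion applied to the compactly supported function $(V_1 - V_2) \chi_\Omega \in L^1(\R^{n+1}) \cap L^\infty(\R^{n+1})$ then forces $V_1 = V_2$ in $\Omega$. No serious obstacle arises in this argument; it is strictly easier than Theorem \ref{thm:unique} precisely because the assumed knowledge of $q$ eliminates the boundary term that the weak boundary decay of $r$ at $s = 1/2$ could not control.
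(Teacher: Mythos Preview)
Your proposal is correct and follows essentially the same approach as the paper: reduce Alessandrini's identity to the bulk term only (since $q_1=q_2$), insert the CGOs from Proposition \ref{CGO_construction} with the $s=\tfrac12$ relaxation of Remark \ref{rmk:s1/2}, use the $O(|\xi'|^{-1/2})$ bulk decay of $r_j$ and Cauchy--Schwarz to discard the error terms, and conclude by Fourier inversion. Your treatment of the complex conjugates (writing $\overline{r_2}$ and checking $\xi'+\overline{\tilde{\xi}'}=0$) is in fact slightly more careful than the paper's presentation.
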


\begin{proof}
The proof follows that of Theorem \ref{thm:unique}, but it is significantly easier due to the lack of boundary terms. Again we let $V:=V_1-V_2$, but this time the Alessandrini identity from Lemma \ref{lem:Aless} reduces to simply 
$$ \int_{\mathbb R^{n+1}} \chi_{\Omega}V u_1 \overline{u_2} dx =0\,,$$

\noindent where $u_1, u_2$ solve \eqref{eq:Schroedinger}. Fix $\xi', k \in \R^{n+1}$ as in Lemma \ref{CGO_construction} with the modification from Remark \ref{rmk:CGO}, and for $\xi' = \zeta_1 + i \zeta_2$ set $\tilde{\xi'}:= -\zeta_1 + i \zeta_2$. Testing the equation above with the following CGOs
\begin{align*}
u_1(x) := e^{\xi'\cdot x'}(e^{ik\cdot x/2}+r_1(x))\;, \quad u_2(x) := e^{\tilde{\xi}'\cdot x'}(e^{-ik\cdot x/2}+r_2(x)),\;
\end{align*}
\noindent leads to
\begin{align*}\begin{split}
0 &= \int_{\mathbb R^{n+1}} \chi_{\Omega}V \left( r_1r_2 + (r_1+r_2)e^{ik\cdot x/2} + e^{ik\cdot x} \right) dx\;.
\end{split}\end{align*}
 
In our current case $s=1/2$, Proposition \ref{CGO_construction} does not grant any decay for the correction functions $r_j$ on the boundary; however, we will make use only of their decay estimate in the bulk. Given that $\|r_j\|_{L^2(\Omega)} =O(|\xi'|^{-1/2})$, by Cauchy-Schwarz
\begin{align*}\begin{split} 
\int_{\mathbb R^{n+1}} |\chi_{\Omega}V r_1r_2| dx & \leq \|V\|_{L^\infty(\Omega)}\|r_1\|_{L^2(\Omega)}\|r_2\|_{L^2(\Omega)} =O(|\xi'|^{-1}), \\
 \int_{\mathbb R^{n+1}} |e^{ik\cdot x/2} \chi_{\Omega}V r_j| dx & \leq \|V\|_{L^\infty(\Omega)}|\Omega|^{1/2}\|r_j\|_{L^2(\Omega)} =O(|\xi'|^{-1/2}) \;.
\end{split}\end{align*} 
Therefore, by finding the limit $|\xi'|\rightarrow \infty$ of the tested equation we obtain
$$ 0 = \int_{\mathbb R^{n+1}} \chi_{\Omega}V e^{ik\cdot x} dx = \mathcal{F}[\chi_\Omega V](k) $$
for all $k\in\mathbb R^{n+1}$. It now follows from the Fourier inversion theorem that $V=0$ on $\Omega$, that is, the potentials $V_1$ and $V_2$ must coincide.
\end{proof}

\begin{appendix}
\section{Proof of Proposition \ref{prop:dense}}

In this section, we provide the proof of Proposition \ref{prop:dense}. 
To this end, we begin by showing the following auxiliary result:

\begin{lem}
\label{lem:aux}
The set $C^{\infty}(\overline{\R^{n+1}_+})$ is dense in $H^1(\R^{n+1}_+, x_{n+1}^{1-2s})$.
\end{lem}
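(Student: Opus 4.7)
The plan is to combine a cutoff to compact support with an even-reflection-and-mollification argument that exploits the fact that $|x_{n+1}|^{1-2s}$ is a Muckenhoupt $A_2$ weight on $\R^{n+1}$ for $s\in (0,1)$.

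First I would reduce to the compactly supported case. Choosing a cutoff $\zeta_R(x) := \eta(|x|/R)$ with $\eta\in C^\infty([0,\infty))$, $\eta\equiv 1$ on $[0,1]$, $\eta\equiv 0$ on $[2,\infty)$, and $|\nabla \zeta_R|\leq C/R$, the functions $\zeta_R u$ lie in $H^1(\R^{n+1}_+, x_{n+1}^{1-2s})$, have compact support in $\overline{\R^{n+1}_+}$, and converge to $u$ in this space as $R\to\infty$ by dominated convergence applied separately to $\zeta_R u$, $\zeta_R \nabla u$, and $u\nabla \zeta_R$. Hence it suffices to approximate such a compactly supported $u$.

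Next I extend $u$ to all of $\R^{n+1}$ by even reflection, $Eu(x',x_{n+1}):=u(x',|x_{n+1}|)$. Since $u$ has a well-defined trace in $H^s(\R^n)$ by Lemma \ref{lem:trace}, the two traces of $Eu$ across $\{x_{n+1}=0\}$ agree, so a direct integration-by-parts against test functions $\varphi\in C_c^\infty(\R^{n+1})$ shows that $Eu$ has no distributional jump contribution at $\{x_{n+1}=0\}$: its tangential derivatives are the even reflection of $\nabla_{x'} u$ and its normal derivative is the odd reflection of $\partial_{n+1} u$. Combined with $|x_{n+1}|^{1-2s}$ being symmetric in $x_{n+1}$, this gives $Eu\in H^1(\R^{n+1},|x_{n+1}|^{1-2s})$ with $\|Eu\|^2_{H^1(\R^{n+1},|x_{n+1}|^{1-2s})}=2\|u\|^2_{H^1(\R^{n+1}_+,x_{n+1}^{1-2s})}$.

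Now I mollify: fix $\phi\in C_c^\infty(\R^{n+1})$ with $\int \phi=1$, set $\phi_\epsilon(x):=\epsilon^{-(n+1)}\phi(x/\epsilon)$, and define $u_\epsilon:=Eu*\phi_\epsilon \in C^\infty(\R^{n+1})$. This is the crucial step. Since $1-2s\in(-1,1)$, the weight $w(x):=|x_{n+1}|^{1-2s}$ is a Muckenhoupt $A_2$ weight on $\R^{n+1}$ (being a one-dimensional power weight with exponent in the admissible range, lifted to $\R^{n+1}$ via the product structure). Consequently, by the Muckenhoupt theory, the Hardy--Littlewood maximal operator is bounded on $L^2(\R^{n+1},w)$. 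The standard pointwise bound $|u_\epsilon(x)|\leq C M(Eu)(x)$, together with Lebesgue's differentiation theorem and dominated convergence with majorant $CM(Eu)\in L^2(w)$, yields $u_\epsilon \to Eu$ in $L^2(\R^{n+1},w)$, and the identical argument applied to $\nabla u_\epsilon=(\nabla Eu)*\phi_\epsilon$ gives convergence of the gradients. Restricting to $\overline{\R^{n+1}_+}$ then produces smooth functions $u_\epsilon|_{\overline{\R^{n+1}_+}}\in C^\infty(\overline{\R^{n+1}_+})$ converging to $u$ in $H^1(\R^{n+1}_+,x_{n+1}^{1-2s})$, as desired.

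The main obstacle is the third step: the continuity of mollification on $L^2(\R^{n+1}, |x_{n+1}|^{1-2s})$ is not immediate because the weight is degenerate or singular at $\{x_{n+1}=0\}$. I would cite the $A_2$-boundedness of $M$ from standard references on Muckenhoupt weights (e.g.\ Stein's \emph{Harmonic Analysis} or Grafakos' \emph{Modern Fourier Analysis}) rather than reprove it. A side point requiring attention is the rigorous verification, via test functions $\varphi\in C_c^\infty(\R^{n+1})$ possibly not vanishing on $\{x_{n+1}=0\}$, that even reflection produces no distributional boundary term despite the weight degeneracy; this is where the existence of a well-defined trace (Lemma \ref{lem:trace}) is essential.
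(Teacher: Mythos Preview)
Your proof is correct and uses the same essential ingredient as the paper --- the $A_2$ Muckenhoupt property of $|x_{n+1}|^{1-2s}$ and the resulting boundedness of the maximal function --- but the packaging is genuinely different. The paper does not reflect: it takes a mollifier $\varphi$ supported in the \emph{lower} unit half-ball $B_1^-(0)$, so that $f_\epsilon = f\ast\varphi_\epsilon$ is automatically well-defined and smooth on $\overline{\R^{n+1}_+}$ from $f$ alone. For the convergence $f_\epsilon\to f$ in $L^2(\R^{n+1}_+,x_{n+1}^{1-2s})$ the paper then argues by a three-region splitting (far field outside $B_R$, thin boundary layer $\{x_{n+1}\le\tilde\delta\}$, and the remaining bounded interior region where classical mollification applies), using the maximal estimate only to control the $L^2(x_{n+1}^{1-2s})$ mass of $f_\epsilon$ on small sets via that of $f$. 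Your route instead front-loads the work into the even-reflection step and then gets convergence in one stroke by dominated convergence with majorant $CM(Eu)$.

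What each approach buys: your argument is cleaner once the reflection is in hand, and the dominated-convergence step is entirely standard; the paper's argument avoids having to justify that even reflection preserves membership in the weighted $H^1$ space (no distributional jump for $\partial_{n+1}$), at the cost of a slightly more hands-on convergence proof. One small point to tighten in your write-up: the claim that even reflection produces no boundary term when tested against $\varphi\in C_c^\infty(\R^{n+1})$ reduces to showing $-\int_{\R^{n+1}_+} u\,\partial_{n+1}\chi = \int_{\R^{n+1}_+}(\partial_{n+1}u)\chi$ for smooth $\chi$ vanishing at $x_{n+1}=0$; this follows by a cutoff $\eta_\delta(x_{n+1})$ and the bound $|\chi|\le Cx_{n+1}$, but invoking Lemma~\ref{lem:trace} alone is not quite the argument --- you should either spell out this cutoff step or cite a reference where reflection in $A_2$-weighted Sobolev spaces is established.
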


\begin{proof}[Proof of Lemma \ref{lem:aux}]
We consider $\varphi : \R^{n+1}\rightarrow \R$ such that $\supp(\varphi) \subset B_1^-(0)$, $\varphi \geq 0$ and $\int_{\R^{n+1}} \varphi dx =1$. Set $\varphi_{\epsilon}(x)= \epsilon^{-n-1} \varphi(\frac{x}{\epsilon})$. Further let $f\in H^1(\R^{n+1}_+, x_{n+1}^{1-2s})$. We construct a smooth  sequence $f_{\epsilon}$ such that $f_{\epsilon} \rightarrow f$ in $H^{1}(\R^{n+1}_{+},x_{n+1}^{1-2s})$.
To this end define $f_{\epsilon}(x):= (f \ast \varphi_{\epsilon})(x)$. Then, since $f\in L^{1}_{loc}(\R^{n+1}_+)$, we obtain that $f_{\epsilon}$ is smooth. Moreover, as a consequence of the maximal function estimate for weights in the Muckenhoupt class (see for instance Theorem 1.2 in \cite{K94} with the difference of working with half-balls instead of balls) $f_{\epsilon}, \ \nabla f_{\epsilon} = (\nabla f )_{\epsilon} \in L^2(\R^{n+1}_+,x_{n+1}^{1-2s})$. In order to prove the convergence, we only show $f_{\epsilon} \rightarrow f$ in $L^2(\R^{n+1}_+, x_{n+1}^{1-2s})$ (the statement for the gradient is then analogous) and begin by  collecting a number of auxiliary observations.
We note that for each $\Omega \subset \overline{\R^{n+1}_+}$, by the maximal function estimates we also have that
\begin{align}
\label{eq:max_func}
\|f_{\epsilon}\|_{L^2(\Omega,x_{n+1}^{1-2s})}
\leq  C \|f\|_{L^2(N(\Omega,\epsilon),x_{n+1}^{1-2s})}.
\end{align}
Here $N(\Omega, \epsilon)$ denotes an $\epsilon$ neighbourhood of $\Omega$ in $\overline{\R^{n+1}_+}$.
Now, since $f \in L^{2}(\R^{n+1}_+,x_{n+1}^{1-2s})$, for each $\delta>0$ there exists $R>1$ such that $\|f\|_{L^2(\R^{n+1}_+ \setminus \overline{B_{R}},x_{n+1}^{1-2s})} \leq \delta$ and thus by \eqref{eq:max_func} also $\|f_{\epsilon}\|_{L^2(\R^{n+1}_+ \setminus \overline{B_{R}},x_{n+1}^{1-2s})} \leq \delta$. 
Moreover, again by the integrability of $f$, there exists $\tilde{\delta}>0$ such that
\begin{align*}
\|f\|_{L^2(B_R \cap \{x_{n+1}\leq \tilde{\delta}\}, x_{n+1}^{1-2s})} + \|f_{\epsilon}\|_{L^2(B_R \cap \{x_{n+1}\leq \tilde{\delta}\}, x_{n+1}^{1-2s})} \leq \delta.
\end{align*}
Finally in $B_{2R} \cap \{x_{n+1}> \tilde{\delta}/2\}$ there exists a sequence $f_k \in C^{\infty}(\overline{B_{2R} \cap \{x_{n+1}> \tilde{\delta}/2\}})$ such that $f_k \rightarrow f$ in $L^2(\R^{n+1}_+, x_{n+1}^{1-2s})$.
Since $(f_k)_{\epsilon}:= f_k \ast \varphi_{\epsilon} \rightarrow f_k$ uniformly on compact sets, we may thus also conclude that
\begin{align*}
\|f_k-(f_k)_{\epsilon}\|_{L^2(B_R \cap \{x_{n+1}\geq \tilde{\delta} \},x_{n+1}^{1-2s})} \leq \delta.
\end{align*}
Also, by \eqref{eq:max_func}
\begin{align*}
\|f_{\epsilon}- (f_k)_{\epsilon}\|_{L^2(B_R \cap \{x_{n+1}\geq \tilde{\delta} \},x_{n+1}^{1-2s})} 
&= \|(f-f_k)_{\epsilon}\|_{L^2(B_R \cap \{x_{n+1}\geq \tilde{\delta} \},x_{n+1}^{1-2s})} \\
&\leq C \|f-f_k\|_{L^2(B_{2R} \cap \{x_{n+1}\geq \tilde{\delta}/2 \},x_{n+1}^{1-2s})} .
\end{align*}
Combining the above observations, infer that
\begin{align*}
\|f-f_{\epsilon}\|_{L^2(\R^{n+1}_+,x_{n+1}^{1-2s})}
&\leq \|f\|_{L^2(\R^{n+1}_+\setminus \overline{B_R},x_{n+1}^{1-2s})}  + \|f_{\epsilon}\|_{L^2(\R^{n+1}_+\setminus \overline{B_R},x_{n+1}^{1-2s})} \\ & \quad+ \|f\|_{L^2(B_R \cap \{ x_{n+1} \leq \tilde \delta \},x_{n+1}^{1-2s})} +  \|f_{\epsilon}\|_{L^2(B_R \cap \{ x_{n+1} \leq \tilde \delta \},x_{n+1}^{1-2s})} \\ & \quad+ \|f-f_{\epsilon}\|_{L^2(B_R \cap \{ x_{n+1} \geq \tilde \delta \},x_{n+1}^{1-2s})}  
\\ & \leq 3\delta + \|f-f_{\epsilon}\|_{L^2(B_R \cap \{x_{n+1}\geq \tilde{\delta} \},x_{n+1}^{1-2s})}\\
&\leq 3\delta + \|f_k-(f_k)_{\epsilon}\|_{L^2(B_R \cap \{x_{n+1}\geq \tilde{\delta} \},x_{n+1}^{1-2s})} + \|f-f_k\|_{L^2(B_R \cap \{x_{n+1}\geq \tilde{\delta} \},x_{n+1}^{1-2s})} \\
&\quad + \|f_{\epsilon}- (f_k)_{\epsilon}\|_{L^2(B_R \cap \{x_{n+1}\geq \tilde{\delta} \},x_{n+1}^{1-2s})} \\
&\leq 6 \delta . 
\end{align*}
Arguing analogously on the level of the derivative implies the claim.
\end{proof}

Next we define the following auxiliary set

\begin{align*}
C^{\infty}_{\Sigma_2}(\Omega):= \{f\in C^{\infty}(\overline{\Omega}): \ \exists \delta>0 \mbox{ s.t. }  f|_{N(\Sigma_{2},\delta)}=0 \}.
\end{align*}

Using this, we turn to the proof of the approximation result. 

\begin{proof}[Proof of Proposition \ref{prop:dense}]
Using Lemma \eqref{lem:aux}, we argue in three steps.\\

\emph{Step 1: Density of $\bigcup\limits_{\delta \in (0,\delta_0)} H^{1}_{N(\R^n \setminus \Sigma_1,\delta),0}(\R^{n+1}_+, x_{n+1}^{1-2s}) \subset H^{1}_{\R^n \setminus \Sigma_1,0 }(\R^{n+1}_+, x_{n+1}^{1-2s})$.}

This follows by rescaling: Indeed, by translation we may assume that $x=0$ is a center of the star-shaped set $\Sigma_1$.
Now let $u \in H^{1}_{\R^n \setminus \Sigma_1,0}(\R^{n+1}_+, x_{n+1}^{1-2s})$. Then, as $C^{\infty}(\overline{\R^{n+1}_+})$ is dense in $H^1(\R^{n+1}_+, x_{n+1}^{1-2s})$, there exists $(u_k)_{k\in \N} \subset C^{\infty}(\overline{\R^{n+1}_+})$ such that $u_k \rightarrow u$ in $H^1(\R^{n+1}_+, x_{n+1}^{1-2s})$. Since $\Sigma_1$ is star-shaped, if we define $d:= \dist(0,\p\Sigma_1)$ and $u_{\delta}(x):= u\left( \frac{d}{d-\delta} x\right)$ for $\delta \in (0,d)$, then we have that $u_{\delta} \in H^{1}_{ N(\R^n \setminus \Sigma_1,\delta),0}(\R^{n+1}_+, x_{n+1}^{1-2s})$ and
\begin{align}
\label{eq:approx_1}
\begin{split}
\|u_{\delta}- u\|_{H^1(\R^{n+1}_+, x_{n+1}^{1-2s})}
& \leq \|u_{\delta}- u_k\left( \frac{d}{d-\delta} \cdot\right)\|_{H^1(\R^{n+1}_+, x_{n+1}^{1-2s})}
+ \|u_k\left( \frac{d}{d-\delta} \cdot\right)-u_k\|_{H^1(\R^{n+1}_+, x_{n+1}^{1-2s})}\\
& \quad + \|u - u_k \|_{H^1(\R^{n+1}_+, x_{n+1}^{1-2s})}.
\end{split}
\end{align}
Now, the first and third contributions in \eqref{eq:approx_1} converge to zero by definition of $u_k$ as approximations to $u$. The middle right hand side contribution converges to zero by the assumed regularity of $u_k$ and a Taylor approximation up to order one.

Using a partition of unity and straightening out the boundary by a suitable diffeomorphism this also implies that 
$\bigcup\limits_{\delta \in (0,\delta_0)} H^{1}_{N(\partial \Omega \setminus \Sigma_1,\delta),0}(\Omega, x_{n+1}^{1-2s}) \subset H^{1}_{\Sigma_2,0}(\Omega, x_{n+1}^{1-2s})$ is dense.
\medskip

\emph{Step 2: Density of $C^{\infty}_{\Sigma_2}(\Omega)  \subset H^{1}_{\partial \Omega \setminus \Sigma_1,0}(\Omega, x_{n+1}^{1-2s})$}.

By Step 1 it suffices to prove that $\bigcup\limits_{\epsilon \in (0,\delta/2)} C^{\infty}_{N(\Sigma_2,\delta+\epsilon )}(\Omega) \subset H^1_{N(\Sigma_2,\delta/2),0}(\Omega, x_{n+1}^{1-2s})$ is dense in $H^1_{N(\Sigma_2,\delta),0}(\Omega, x_{n+1}^{1-2s}) $ for all sufficiently small $\delta>0$.

By virtue of a partition of unity and by straightening out the boundary, it suffices to consider $u \in H^1(\R^{n+1}_+, x_{n+1}^{1-2s})$ satisfying one of the following two cases:
\begin{itemize}
\item[(i)] $u|_{\R^n}$ has compact, but non-trivial support in $\Sigma_1$,
\item[(ii)] $u|_{\R^n}=0$,
\end{itemize}
and to prove a corresponding approximation result in these cases.
The first case arises when working with a patch of the partition of unity which includes $N(\Sigma_1,\epsilon)$, the second occurs for any other patch (we remark that without loss of generality, it is possible to arrange for this).

\emph{Step 2a: Case (i).}
For case (i) we in turn argue in two steps. 

\emph{Step 2a, part 1; constant modification at $x_{n+1}=0$.}
First we define the function $\tilde{u}_{\epsilon}$ such that $\tilde{u}_{\epsilon}(x',x_{n+1}) = u(x',0)$ for $x_{n+1} \in [0,2\epsilon]$ and $\tilde{u}_{\epsilon}(x',x_{n+1})=0$ for $x_{n+1}>2\epsilon$. We observe that $\tilde{u}_{\epsilon} = 0$ in $\R^{n+1}_+ \setminus (\Sigma_1 \times [0,2\epsilon])$.
We further consider $\eta : [0,\infty) \rightarrow [0,1]$ with $\eta \in C^{\infty}([0,\infty))$, $\eta(t) = 1$ on $[0,\frac{1}{2}]$, $\supp(\eta) \subset [0,2]$ and $|\nabla \eta|\leq C$. Based on this we define $\eta_{\epsilon}(t):= \eta(\frac{t}{\epsilon})$ and $u_{\epsilon}:= \eta_{\epsilon}(x_{n+1}) \tilde{u}_{\epsilon}(x) +(1-\eta_{\epsilon}(x_{n+1})) u(x)$. We claim that $u_{\epsilon} \rightarrow u$ in $H^1(\R^{n+1}_+, x_{n+1}^{1-2s})$. 

To this end, we observe that
\begin{align*}
\|u_{\epsilon}-u\|_{L^2(\R^{n+1}_+, x_{n+1}^{1-2s})}
&= \|\eta_{\epsilon} (\tilde{u}_{\epsilon}-u) \|_{L^2(\R^n \times [0,2\epsilon], x_{n+1}^{1-2s})}\\
&\leq \|u(x',0)- u(x)\|_{L^2(\R^n \times [0,2\epsilon], x_{n+1}^{1-2s})} \rightarrow 0,
\end{align*}
by the integrability of $u(x',0)-u(x)$.
For the derivative we note that
\begin{align}
\label{eq:expr_grad}
\nabla u - \nabla u_{\epsilon} =
(u-\tilde{u}_{\epsilon})\nabla \eta_{\epsilon}
+ \eta_{\epsilon} \nabla (u- \tilde{u}_{\epsilon}).
\end{align}
Due to the support conditions for $\eta_{\epsilon}$ and by the fact that $\nabla (u- \tilde{u}_{\epsilon}) \in L^2({\R^n} \times [0,2\epsilon_0),x_{n+1}^{1-2s})$ for some fixed $\epsilon_0>0$, we have that 
$$\|\eta_{\epsilon} \nabla (u- \tilde{u}_{\epsilon})\|_{L^2(\R^{n+1}_+, x_{n+1}^{1-2s})} \rightarrow 0$$ as $\epsilon \rightarrow 0$.

For the first contribution in the expression for the gradient \eqref{eq:expr_grad}, we use the fundamental theorem (which makes use of the approximation statement from Lemma \ref{lem:aux}):
We have that 
\begin{align*}
|(u-\tilde{u}_{\epsilon})(x)|
= |u(x', x_{n+1})- u(x',0)|
\leq \int\limits_{0}^{x_{n+1}} |\p_{n+1} u(x',t)| d t.
\end{align*}
Thus, using Hölder's inequality, we obtain
\begin{align*}
|(u-\tilde{u}_{\epsilon})(x)|^2
\leq x_{n+1}^{2s} \int\limits_{0}^{x_{n+1}} t^{1-2s}|\p_{n+1} u(x',t)|^2 dt.
\end{align*}
As a consequence, an integration yields
\begin{align*}
\|(u-\tilde{u}_{\epsilon})\nabla \eta_{\epsilon}\|_{L^2(\R^{n+1}_+, x_{n+1}^{1-2s})}
&\leq C \epsilon^{-1}\|u-\tilde{u}_{\epsilon}\|_{L^2(\R^n \times [0,\epsilon], x_{n+1}^{1-2s})}
\\ & \leq C \epsilon^{s-1} \left\| \left( \int\limits_{0}^{x_{n+1}} t^{1-2s}|\p_{n+1} u(x',t)|^2 dt \right)^{\frac{1}{2}}\right\|_{L^2(\R^n \times [0,\epsilon], x_{n+1}^{1-2s})}
\\ & \leq C \epsilon^{s-1} \left( \int_0^\epsilon x_{n+1}^{1-2s} dx_{n+1}\right)^{1/2} \left\| \left( \int\limits_{0}^{\epsilon} t^{1-2s}|\p_{n+1} u(x',t)|^2 dt \right)^{\frac{1}{2}}\right\|_{L^2(\R^n)}
\\& \leq C_s \epsilon^{s-1} \epsilon^{1-s} \|\nabla u\|_{L^2(\R^n \times [0,\epsilon], x_{n+1}^{1-2s})}\\
& = C_s \|\nabla u\|_{L^2(\R^n \times [0,\epsilon], x_{n+1}^{1-2s})} \rightarrow 0 \mbox{ as } \epsilon \rightarrow 0,
\end{align*}
since $\nabla u \in H^1(\R^{n+1}_+ , x_{n+1}^{1-2s})$. This proves the claimed convergence $u_{\epsilon } \rightarrow u$.

\medskip

\emph{Step 2a, part 2, mollification.}
As a second step, we start with a function $u_{\epsilon}$ as obtained in Step 2a, part 1 which by a slight abuse of notation (by dropping the index) we denote by $u$. For this function, we now consider $u_{\delta}(x):=u\ast \varphi_{\delta}(x)$, where $\delta \in (0,\epsilon)$ and $\varphi_{\delta}(x):= \delta^{-n-1}\varphi(\frac{x}{\delta})$ with $\int\limits_{\R^{n+1}_+}\varphi(y)dy = 1$, $\varphi \in C^{\infty}(\R^{n+1}_+)$ is a mollifier supported in {$B_1^-$}. By the properties of the function $u$ (in particular, recall that $u= 0$ in $(\R^n \setminus \Sigma_1) \times [0,\epsilon]$), for $\delta>0$ sufficiently small, the function $u_{\delta}\in C^{\infty}(\R^{n+1}_+)\cap H^{1}(\R^{n+1}_+, x_{n+1}^{1-2s})$ then satisfies that $\supp(u_{\delta}|_{\R^{n}}) \subset N(\Sigma_1, \delta)$ and $u_{\delta} \rightarrow u$ in $H^1(\R^{n+1}_+, x_{n+1}^{1-2s})$. 

Combining both steps from Steps 2a by means of a diagonal argument then implies the claim for case (i).

\medskip

\emph{Step 2b: The case (ii).}
Now for case (ii) we argue as in the classical case, but replace the trace inequalities by correspondingly weighted ones; we refer to \cite[Chapter 5.5, Theorem 2]{E10}. We present some of the details for completeness. First by the density of $C^{\infty}(\overline{\R^{n+1}_+}) \subset H^1(\R^{n+1}_+, x_{n+1}^{1-2s})$ there exists a sequence $(u_m)_{m\in \N} \subset C^{\infty}(\overline{\R^{n+1}_+})$ such that $u_m\rightarrow u$ in $H^1(\R^{n+1}_+, x_{n+1}^{1-2s})$. Due to trace estimates similarly as in Lemma \ref{lem:boundary_trace_s} and the fact that $u|_{\R^n}=0$ we have $u_m|_{\R^n} \rightarrow 0$. Now by the fundamental theorem we obtain
\begin{align*}
|u_m(x',x_{n+1})| \leq |u_m(x',0)| + \int\limits_{0}^{x_{n+1}} |D u_m(x',t)| dt.
\end{align*}
Integrating and applying Hölder's inequality implies that
\begin{align*}
\|u_m(\cdot, x_{n+1})\|_{L^2(\R^n)}^2 
\leq C(\|u_m(\cdot, 0)\|_{L^2(\R^n)}^2 + x_{n+1}^{2s} \int\limits_{0}^{x_{n+1}} t^{1-2s} \|\nabla u_m(\cdot, t)\|^2_{L^2(\R^n)} d t).
\end{align*}
In particular, for $m \rightarrow \infty$, by the vanishing of the trace of $u$, we arrive at
\begin{align}
\label{eq:trace}
\|u(\cdot, x_{n+1})\|_{L^2(\R^n)}^2 
\leq C x_{n+1}^{2s} \int\limits_{0}^{x_{n+1}} t^{1-2s} \|\nabla u(\cdot, t)\|^2_{L^2(\R^n)} d t.
\end{align}
We now define 
\begin{align*}
w_m := u (1-\zeta_m),
\end{align*}
where $\zeta_m(x) := \zeta(m x_{n+1})$ and $\zeta\in C^\infty(\mathbb R)$ is such that  $\zeta(t) = 1$ for $t\in [0,1]$ and $\zeta = 0$ on $(2,\infty)$ and $0\leq \zeta\leq 1$.
We obtain
\begin{align*}
\p_{n+1} w_m &= (1-\zeta_m) \p_{n+1} u - m u \zeta'|_{mx_{n+1}},\\
\p_j w_m & = (1-\zeta_m) \p_j u \mbox{ for all } j\in \{1,\dots,n\}.
\end{align*}
Thus, 
\begin{align*}
\|\nabla (w_m-u_m)\|_{L^2(\R^{n+1}_+, x_{n+1}^{1-2s})}^2
\leq C \|\zeta_m \nabla u\|_{L^2(\R^{n+1},x_{n+1}^{1-2s})}^2
+ C m^2 \int\limits_{0}^{2/m} \int\limits_{\R^n} x_{n+1}^{1-2s} |u|^2 dx' dx_{n+1}.
\end{align*}
By construction, the first term converges to zero, as $\zeta_m \neq 0$ only for $x_{n+1}\in (0,2/m)$. For the second contribution we use \eqref{eq:trace}. This yields

\begin{align*}
m^2 \int\limits_{0}^{2/m} \int\limits_{\R^n} x_{n+1}^{1-2s} |u|^2 dx' dx_{n+1} & =  m^2 \int\limits_{0}^{2/m}  x_{n+1}^{1-2s} \|u(\cdot, x_{n+1})\|_{L^2(\R^n)}^2   dx_{n+1} 
\\ & \leq C m^2 \int\limits_{0}^{2/m}  x_{n+1} \int\limits_{0}^{x_{n+1}} t^{1-2s} \|\nabla u(\cdot, t)\|^2_{L^2(\R^n)} d t  dx_{n+1} 
\\ & \leq C m^2 \left(\int\limits_{0}^{2/m}  x_{n+1} dx_{n+1}\right)\int\limits_{0}^{2/m} t^{1-2s} \|\nabla u(\cdot, t)\|^2_{L^2(\R^n)} d t  
\\ & \leq C \|\nabla u\|_{L^2(\R^n \times [0,2/m], x_{n+1}^{1-2s})} \rightarrow 0 \mbox{ as } m \rightarrow \infty.
\end{align*}

\medskip

\emph{Step 3: Density of $\tilde{\mathcal{C}} \subset H^{1}_{\Sigma_1,0}(\Omega, x_{n+1}^{1-2s})$.} 

Let $u \in C^{\infty}_{\Sigma_2}(\Omega)$. We now approximate this function by a function of the desired structure.
Working in boundary normal coordinates $x = x' + t \nu(x')$ we define $\tilde{u}_{\epsilon}(x):= u(x')$ for $x \in  \partial \Omega_{2\epsilon}$. Let now $\eta_{\epsilon}$ be a smooth cut-off function which is equal to one in $\partial \Omega_{\epsilon}$ supported in $\partial \Omega_{2\epsilon}$ with $|\nabla' \eta_{\epsilon}| \leq C $ and $|\p_{\nu}\eta_{\epsilon}| \leq \frac{C}{\epsilon}$.
We then set $u_{\epsilon}(x):= \eta_{\epsilon}(x) \tilde{u}_{\epsilon}(x) + (1-\eta_{\epsilon})u(x)$.
Then,
\begin{align*}
\|u-u_{\epsilon}\|_{L^2(\Omega, x_{n+1}^{1-2s})}
= \|\eta_{\epsilon} (u-\tilde{u}_{\epsilon})\|_{L^2(\Omega, x_{n+1}^{1-2s})}.
\end{align*}
Since $u \in C^{\infty}(\Omega)$, we have $|\eta_{\epsilon}(x)||u(x)-\tilde{u}_{\epsilon}(x)| \leq C \sup\limits_{x\in \supp(\eta_{\epsilon})} |\p_t u(x)| t \leq C \epsilon $. Thus, 
\begin{align*}
\|u-u_{\epsilon}\|_{L^2(\Omega, x_{n+1}^{1-2s})}
\leq C_s \epsilon \epsilon^{1-s}.
\end{align*}

For the derivative we note that 
\begin{align*}
\|\nabla(u-u_{\epsilon})\|_{L^2(\Omega, x_{n+1}^{1-2s})}
& = \|\nabla[\eta_{\epsilon} (u-\tilde{u}_{\epsilon})]\|_{L^2(\Omega, x_{n+1}^{1-2s})}\\
& \leq  \|(u-\tilde{u}_{\epsilon})(\nabla \eta_{\epsilon}) \|_{L^2(\Omega, x_{n+1}^{1-2s})}
+ \| \eta_{\epsilon} \nabla (u-\tilde{u}_{\epsilon}) \|_{L^2(\Omega, x_{n+1}^{1-2s})}.
\end{align*}
Now using that $|\nabla \eta_{\epsilon}| \leq C \epsilon^{-1}$, $|u-\tilde{u}_{\epsilon}| \leq C \epsilon$ and $|\nabla (u- \tilde{u}_{\epsilon})| \leq C$, we obtain
\begin{align*}
\|\nabla(u-u_{\epsilon})\|_{L^2(\Omega, x_{n+1}^{1-2s})}
& \leq  \|(u-\tilde{u}_{\epsilon})(\nabla \eta_{\epsilon}) \|_{L^2(\Omega, x_{n+1}^{1-2s})}
+ \| \eta_{\epsilon} \nabla (u-\tilde{u}_{\epsilon}) \|_{L^2(\Omega, x_{n+1}^{1-2s})}\\
& \leq C \omega_s(\supp(\eta_{\epsilon}))^{\frac{1}{2}} \leq C_s \epsilon^{1-s},
\end{align*}
where for $\Omega' \subset \R^{n+1}_+$ measurable $\omega_s(\Omega'):= \int\limits_{\Omega'} x_{n+1}^{1-2s} dx$.
We note that the function $u_{\epsilon}$ has the desired property defining $\tilde{\mathcal{C}}$. Indeed, by the construction of $\tilde{u}_{\epsilon}$ we have $u_{\epsilon}= 0$ on $\Sigma_2$ and by construction of $\tilde{u}_{\epsilon}$ and of $\eta_{\epsilon}$ we also have $\p_{\nu} \tilde{u}_{\epsilon} = 0$ on $\partial \Omega$. 
It remains to argue that $\p_{n+1} u_{\epsilon}=0$ in $N(\Sigma_1, \delta) \times [0,\delta)$ for some $\delta>0$ small. This on the one hand follows from the fact that in $\Sigma_1 \times [0,\epsilon/2]$ the boundary normal coordinates are simply Euclidean coordinates $x=(x', x_{n+1})$ and that the function $\tilde{u}_{\epsilon}$ does not depend on the $x_{n+1}$ variable there by definition. On the other hand, we also have that in $N(\Sigma_2, \tilde{\epsilon})$ for some $\tilde{\epsilon}>0$ the function $u \in C^{\infty}_{\Sigma_2}(\Omega)$ satisfies $u = 0$. As a consequence, the function $\tilde{u}_{\epsilon}(x)=0$ in a set $\{x\in \Omega: x = x' + t \nu, \ x' \in N(\Sigma_1,\delta)\setminus\Sigma_1, \ t \in [0,2\delta]\}$ for some small $\delta>0$. This however implies that $\nabla \tilde{u}_{\epsilon}=0$ on this set, which entails that $\p_{n+1} u_{\epsilon}=0$ also in a set $N(\Sigma_1,\delta) \times [0,\delta)$.

Combining all the steps from above by a diagonal argument  concludes the proof.
\end{proof}

\end{appendix}

\section*{Acknowledgements}
This project was started during a visit of G. Covi to the MPI MIS. Both authors would like to thank the MPI MIS for the great working environment. Both authors would also like to thank Mikko Salo for pointing out the articles \cite{C14, C15} and some of the literature on the inverse Robin problem to them. G. Covi was partially supported by the European Research Council under Horizon 2020 (ERC CoG 770924).

\bibliographystyle{alpha}
\bibliography{citations_Lipschitz_updated_6}

\end{document}